\newtheorem{theorem}{Theorem}[section]
\newtheorem{proposition}[theorem]{Proposition}
\newtheorem{claim}[theorem]{Claim}
\newtheorem{conjecture}[theorem]{Conjecture}
\newtheorem{lemma}[theorem]{Lemma}
\newtheorem{corollary}[theorem]{Corollary}
\theoremstyle{definition}
\newtheorem{remark}[theorem]{Remark}
\newtheorem{definition}[theorem]{Definition}
\newtheorem{observation}[theorem]{Observation}
\newtheorem{example}[theorem]{Example}
\newtheorem{algorithm}{Algorithm}
\renewcommand{\emptyset}{\varnothing}
\newcommand{\R}{\mathbb{R}}
\newcommand{\Z}{\mathbb{Z}}
\newcommand{\bx}{\mathbf{x}}
\newcommand{\B}{\mathcal{B}}
\newcommand{\F}{\mathcal{F}}
\newcommand{\Po}{\mathcal{P}}
\DeclareMathOperator{\ehr}{\mathsf{ehr}}
\DeclareMathOperator{\Rel}{Rel} 
\newcommand{\defterm}[1]{\emph{\textbf{#1}}}
\DeclareMathOperator{\Pan}{\mathsf{Pan}} 
\DeclareMathOperator{\CF}{\mathsf{CF}} 
\DeclareMathOperator{\wt}{\mathsf{wt}}
\newcommand\commentout[1]{}
\newtheorem*{rep@theorem}{\rep@title}\newcommand{\newreptheorem}[2]{%
\newenvironment{rep#1}[1]{%
\def\rep@title{\bf #2 \ref{##1}}%
\begin{rep@theorem}}%
{\end{rep@theorem}}}
\newtheorem*{rep@conjecture}{\rep@title}\newcommand{\newrepconjecture}[2]{%
\newenvironment{rep#1}[1]{%
\def\rep@title{\bf #2 \ref{##1}}%
\begin{rep@conjecture}}%
{\end{rep@conjecture}}}
\title{Ehrhart Bounds for Panhandle and Paving Matroids Through Enumeration of Chain Forests}
\author{Danai Deligeorgaki}
\address{\scriptsize{KTH Royal Institute of Technology}, \url{https://www.kth.se/profile/danaide}}
\email{\scriptsize{danaide@kth.se }}
\author{Daniel McGinnis}
\address{\scriptsize{Department of Mathematics, Iowa State University}, \url{https://sites.google.com/iastate.edu/danielmcginnis}}
\email{\scriptsize{dam1@iastate.edu }}
\author{Andr\'es R. Vindas-Mel\'endez}
\address{\scriptsize{Departments of Mathematics, UC Berkeley \& Harvey Mudd College}, \url{https://math.berkeley.edu/~vindas}}
\email{\scriptsize{andres.vindas@berkeley.edu; arvm@hmc.edu}}
\date{}
\begin{document}

\begin{abstract}

Panhandle matroids are a specific family of lattice-path matroids corresponding to panhandle-shaped Ferrers diagrams.
Their matroid polytopes are the subpolytopes carved from a hypersimplex to form matroid polytopes of paving matroids. 
It has been an active area of research to determine which families of matroid polytopes are Ehrhart positive. 
We prove Ehrhart positivity for panhandle matroid polytopes, thus confirming a conjecture of Hanely, Martin, McGinnis, Miyata, Nasr, Vindas-Mel\'endez, and Yin (2023).
Another standing conjecture posed by Ferroni (2022) asserts that the coefficients of the Ehrhart polynomial of a connected matroid are bounded above by those of the corresponding uniform matroid.
We prove Ferroni's conjecture for paving matroids -- a class conjectured to asymptotically contain all matroids.
These results follow from purely enumerative  statements, the main one being conjectured by Hanely et. al concerning the enumeration of a certain class of ordered chain forests. 
Thus, the content and proofs in this paper reflect an enumerative combinatorics perspective, driven by considerations in Ehrhart theory.

\end{abstract}


\maketitle


\section{Introduction}

Two elemental objects in combinatorics and discrete geometry, namely matroids and polytopes, have garnered the attention of mathematicians for years. 
Matroid (base) polytopes form a bridge between these two objects. 
In this paper we resolve conjectures on the Ehrhart theory (the theory of integer points in polytopes) of two types of matroid polytopes: those arising from \emph{panhandle} and \emph{paving matroids}. 

A \defterm{matroid} $M$ on a finite ground set $E$ can be defined by its basis system $\B$, a nonempty collection of subsets of $E$, all of the same size, satisfying a certain \emph{exchange condition}. 
Given a matroid $M$ with ground set $[n]=\{1,2,\dots,n\}$ and rank~$r$, the \defterm{matroid (base) polytope} $\Po_M$ is the convex hull in $\R^n$ of the indicator vectors of its bases.
Matroid polytopes provide geometric insight into the structure of matroids.
A panhandle matroid, denoted $\Pan_{r,s,n}$, is a lattice-path matroid corresponding to panhandle-shaped Ferrers diagrams. It is described by the basis system:
\[
\B=\B_{r,s,n}=\left\{B\in\binom{[n]}{r}:\ |B\cap[s]|\geq r-1\right\}.
\]
See \cite{hanelyetal} for more information on panhandle matroids, although it is not needed to understand the main result of this paper and its proof.
A matroid of rank~$r$ is called \defterm{paving} if every \textit{circuit} has cardinality greater than or equal to~$r$, and \defterm{sparse paving} if it and its dual are both paving.
Paving matroids are noteworthy as it is conjectured and widely believed that asymptotically all matroids are paving, or even sparse paving (see \cite{mayhew}, \cite{pendavingh-vanderpol}, \cite[Chapter~15.5]{Oxley}).
For a more in depth background on matroids, we recommend ~\cite{Oxley} and~\cite{Welsh}.

The \defterm{Ehrhart function} of a polytope $P$ (i.e., the convex hull of finitely many points)  $P\subset \R^d$,  is the lattice-point enumerator $\ehr_P(t):=|tP\cap \Z^d|$, where $tP=\{t\bx:\ \bx\in P\}$ for $t\in \Z_{\geq 0}$. 
Ehrhart proved that when $P$ is a lattice polytope (i.e., its vertices have integer coordinates), the Ehrhart function is a polynomial in $t$ of degree equal to the dimension of $P$, commonly known as the \defterm{Ehrhart polynomial} \cite{Ehrhart}.  
The leading coefficient of the Ehrhart polynomial of a lattice polytope $P$ is equal to its relative volume, the second-leading coefficient is equal to half the relative surface area, and the constant coefficient is 1.
The other coefficients of $\ehr_P(t)$ are more mystifying, and in general can be negative (see \cite[Section 4]{liu} for several examples of Ehrhart polynomials with negative coefficients). 
Ehrhart polynomials are an important invariant of lattice polytopes, as they encode much of a polytope's geometry, arithmetic, and combinatorics. 
As a whole, Ehrhart theory has developed into a key topic, playing a role in and benefiting from other areas of mathematics including, but not limited to, number theory \cite{Beck, BaldoniVergne, MeszarosMorales}, commutative algebra \cite{CoxLittleSchenck, Fulton}, algebraic geometry \cite{Morelli, Pommersheim}, enumerative combinatorics \cite{beck-robins, beck-sanyal}, and integer programming \cite{Barvinok, Bruns23}.

In \cite{DeLoeraEhrhart2009}, De Loera, Haws, and K\"oppe conjectured that the Ehrhart polynomial of every matroid polytope has nonnegative coefficients (i.e., they are \defterm{Ehrhart positive}).
Their conjecture was disproved by Ferroni \cite{FerroniMatroids2022}, who constructed an infinite family of matroids whose matroid polytopes are not {Ehrhart positive}.
Determining families of matroid polytopes that do exhibit Ehrhart positivity is itself both a challenging and interesting problem, as evidenced by the extensive literature (see, for example \cite{hanelyetal,ferroni2023lattice,ferroniPositive, FerroniMinimal,fjs,jochemko-ravichandran,Knauer+}).
The techniques used to show Ehrhart positivity for different polytopes vary widely.
In this paper, we rely solely on combinatorial techniques.

Volume formulas for general matroid polytopes were given by Ardila, Benedetti, and Doker \cite{ArdilaBenedettiDoker} and Ashraf~\cite{Ashraf}, but their Ehrhart polynomials are not as well understood. 
One example of a family of matroid polytopes for which their Ehrhart polynomial is determined is \defterm{minimal matroids} (i.e., matroids with the least number of bases for their rank and ground set size among all such connected matroids).
In ~\cite{FerroniMinimal}, Ferroni calculated the Ehrhart polynomial of minimal matroid polytopes and proved that they are Ehrhart positive.
Subsequently, in \cite{FerroniMatroids2022}, Ferroni used the fact that the matroid polytopes of a sparse paving matroid could be obtained from a hypersimplex by slicing with (geometric) hyperplanes. 
Each piece sliced off in this way is itself a matroid polytope of the minimal matroid, which resulted in an explicit formula for the Ehrhart polynomial of a sparse paving matroid polytope, for which Ferroni was able to show is not always Ehrhart positive. 

In \cite{hanelyetal}, the authors extended Ferroni's methods from sparse paving to paving matroids and showed that the matroid polytope of any paving matroid can be obtained from slicing pieces off from the hypersimplex using hyperplanes.
The pieces sliced off are matroid polytopes of panhandle matroids.
With combinatorial techniques, they calculated the Ehrhart function of a panhandle matroid polytope and provided several equivalent formulas for it.
One such formula is the following: 

\begin{theorem}[Corollary 5.4 in \cite{hanelyetal}]\label{formula for positivity} Let $1\leq r\leq s< n$.
The Ehrhart polynomial of $\Po_{\Pan_{r,s,n}}$ can be written as
\[\ehr_{\Pan_{r,s,n}}(t)=
\frac{n-s}{(n-1)!}\binom{t+n-s}{n-s}
\varphi_{r,s,n}(t)\]
where
\begin{equation}\label{phisrlt}
\varphi_{r,s,n}(t)
=\sum_{i=0}^{s-r}(-1)^i\binom{s}{i}\sum_{\ell=0}^{s-1}(n-2-\ell)!\ell!\binom{s-1-\ell-i+t(s-r-i+1)}{s-1-\ell}\binom{s-1-i+t(s-r-i)}{\ell}.
\end{equation}
\end{theorem}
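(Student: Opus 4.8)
The plan is to re-derive this formula directly, by reducing it to a lattice-point count in dilates of an explicitly described polytope and then carrying out the resulting binomial manipulation. Since $|B|=r$, the condition $|B\cap[s]|\ge r-1$ is equivalent to $|B\cap\{s+1,\dots,n\}|\le1$, and the first step is to show
\[
\Po_{\Pan_{r,s,n}}=\Bigl\{x\in[0,1]^n:\ \textstyle\sum_{i=1}^{n}x_i=r,\ \sum_{i=s+1}^{n}x_i\le 1\Bigr\}.
\]
The inclusion ``$\subseteq$'' is immediate because every $\ee_B$ with $B\in\B_{r,s,n}$ satisfies these constraints and the right-hand side is convex. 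For ``$\supseteq$'', note that the right-hand side is the intersection of the hypersimplex $\Delta_{r,n}=\{x\in[0,1]^n:\sum x_i=r\}$ with the halfspace $\sum_{i>s}x_i\le1$; any vertex of this intersection is either a vertex of $\Delta_{r,n}$ (an $\ee_B$ with $|B\cap\{s+1,\dots,n\}|\le1$) or a vertex of the slice $\Delta_{r,n}\cap\{\sum_{i>s}x_i=1\}$, which is the product $\{y\in[0,1]^{s}:\sum y_i=r-1\}\times\{z\in[0,1]^{n-s}:\sum z_i=1\}$ of lattice polytopes and hence has only integral vertices, again of the form $\ee_B$ with $B\in\B_{r,s,n}$. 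Therefore
\[
\ehr_{\Pan_{r,s,n}}(t)=\#\Bigl\{x\in\Z_{\ge0}^n:\ x_i\le t\ \forall i,\ \textstyle\sum_{i=1}^{n}x_i=tr,\ \sum_{i=s+1}^{n}x_i\le t\Bigr\}.
\]

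Next I would evaluate this count by fibering over $b:=\sum_{i=s+1}^{n}x_i\in\{0,1,\dots,t\}$. For a fixed $b$, the coordinates in $\{s+1,\dots,n\}$ form a weak composition of $b$ into $n-s$ parts (all automatically $\le t$), contributing $\binom{b+n-s-1}{n-s-1}$; the coordinates in $[s]$ form a weak composition of $tr-b$ into $s$ parts each $\le t$. Replacing each such part $y_i$ by $t-y_i$ turns the latter into weak compositions of $t(s-r)+b$ into $s$ parts each $\le t$, and inclusion--exclusion over the set of parts that exceed $t$ yields the count $\sum_{i}(-1)^{i}\binom{s}{i}\binom{t(s-r-i)+b+s-1-i}{s-1}$; since $b\le t$, every term with $i>s-r$ vanishes, so the sum runs exactly over $0\le i\le s-r$. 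Assembling,
\[
\ehr_{\Pan_{r,s,n}}(t)=\sum_{i=0}^{s-r}(-1)^{i}\binom{s}{i}\sum_{b=0}^{t}\binom{b+n-s-1}{n-s-1}\binom{t(s-r-i)+b+s-1-i}{s-1}.
\]

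Finally I would evaluate the inner sum over $b$. Its summand is a product of two ``rising'' binomial coefficients in $b$, and repeated Abel summation closes it as a polynomial in $t$; a convenient bookkeeping device is that $\sum_{b=0}^{t}\binom{b+n-s-1}{n-s-1}v(b)$ equals $\binom{t+n-s}{n-s}$ times a polynomial in $t$ for every polynomial $v$ (using the trinomial revision identity in the form $\binom{t+n-s}{n-s+j}\binom{n-s+j}{j}=\binom{t+n-s}{n-s}\binom{t}{j}$), so the prefactor $\frac{n-s}{(n-1)!}\binom{t+n-s}{n-s}$ emerges automatically. Re-expanding the remaining binomial coefficients by Vandermonde's identity in the form $\sum_{\ell=0}^{s-1}\binom{\cdot}{s-1-\ell}\binom{\cdot}{\ell}$ and collecting the accompanying factorial weights $(n-2-\ell)!\,\ell!$ should then reproduce $\varphi_{r,s,n}(t)$ exactly as in \eqref{phisrlt}. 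I expect the main obstacle to be precisely this last, purely algebraic, bookkeeping step: organizing the binomial convolutions so that $\binom{t+n-s}{n-s}$ factors out cleanly and the surviving terms regroup into the stated double sum. Two specializations serve as useful checks along the way: at $s=r$ only the $i=0$ term survives and the formula must collapse to Ferroni's Ehrhart polynomial of the minimal matroid \cite{FerroniMinimal}, while at $r=1$ the matroid is uniform and $\ehr_{\Pan_{1,s,n}}(t)$ must equal $\binom{t+n-1}{n-1}$.
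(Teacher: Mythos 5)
This theorem is not proved in the present paper at all; it is imported verbatim from \cite{hanelyetal} (their Corollary 5.4), so there is no internal argument to compare yours against, and your route is in any case quite different from the chain-forest/weighted-enumeration derivation of the cited source. The first three steps of your plan are correct: the half-space description of $\Po_{\Pan_{r,s,n}}$ (your vertex argument for ``$\supseteq$'' is fine, since the slice is $\Delta_{r-1,s}\times\Delta_{1,n-s}$), the fibering over $b=\sum_{i>s}x_i$, and the bounded-composition inclusion--exclusion all hold, and they do yield
\[
\ehr_{\Pan_{r,s,n}}(t)=\sum_{i=0}^{s-r}(-1)^{i}\binom{s}{i}\sum_{b=0}^{t}\binom{b+n-s-1}{n-s-1}\binom{t(s-r-i)+b+s-1-i}{s-1}.
\]

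The genuine gap is the final step, which you yourself defer as ``bookkeeping'': it is exactly where the specific shape of $\varphi_{r,s,n}$ lives, and your suggested tools do not produce it. Writing $u=t(s-r-i)+s-1-i$, what you must prove (for each $i$) is
\[
\sum_{b=0}^{t}\binom{b+n-s-1}{n-s-1}\binom{u+b}{s-1}
=\frac{n-s}{(n-1)!}\binom{t+n-s}{n-s}\sum_{\ell=0}^{s-1}(n-2-\ell)!\,\ell!\,\binom{u+t-\ell}{s-1-\ell}\binom{u}{\ell}.
\]
If you run your own plan (Vandermonde $\binom{u+b}{s-1}=\sum_\ell\binom{u}{\ell}\binom{b}{s-1-\ell}$, then trinomial revision and the hockey-stick sum over $b$), you arrive instead at $\binom{t+n-s}{n-s}\sum_{\ell}\frac{n-s}{\,n-1-\ell\,}\binom{u}{\ell}\binom{t}{s-1-\ell}$, whose $\ell$-terms involve $\binom{t}{s-1-\ell}$ rather than $\binom{u+t-\ell}{s-1-\ell}$ and carry the weight $\frac{1}{n-1-\ell}$ rather than $\frac{(n-2-\ell)!\,\ell!}{(n-1)!}$; the two $\ell$-sums are equal only in total, not termwise, and passing between them is a further nontrivial resummation (the weight $\frac{(n-2-\ell)!\,\ell!}{(n-1)!}$ is a beta-integral--type factor, which hints that a different mechanism, not plain Vandermonde, is needed). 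So the proposal currently proves a correct but different closed form and leaves the identification with \eqref{phisrlt} open. A smaller issue: your divisibility lemma (``$\sum_{b=0}^{t}\binom{b+n-s-1}{n-s-1}v(b)$ is $\binom{t+n-s}{n-s}$ times a polynomial'') is stated for $v$ a polynomial in $b$, but your $v$ depends on $t$ as well; this is repairable by treating $u$ as an independent variable, proving the two-variable divisibility, and substituting $u=t(s-r-i)+s-1-i$ afterwards, but as written the argument does not apply.
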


\noindent Though the authors presented the Ehrhart polynomial of a panhandle matroid\footnote{For simplicity, we write ``the Ehrhart polynomial of the matroid $M$" (denoted, $\ehr_M(t)$), to mean the ``Ehrhart polynomial of the matroid polytope of $M$".}, its Ehrhart positivity is not immediately evident.
Observe that, in the formula for $\ehr_{\Pan_{r,s,n}}(t)$ given in Theorem \ref{formula for positivity},
since $\binom{t+n-s}{n-s}$ is a polynomial in $t$ with positive coefficients, in order to prove the Ehrhart positivity of panhandle matroids, it suffices to show that the polynomial $\varphi_{r,s,n}(t)$ has positive coefficients.
In \cite{hanelyetal}, the authors conjectured that panhandle matroids are Ehrhart positive \cite[Conjecture 6.1]{hanelyetal}, and they showed that the positivity of $\varphi_{r,s,n}(t)$ follows from the purely combinatorial statement below. We defer the relevant definitions to Section \ref{sec:prelim}, and we include the statement here to emphasize that the contents of this paper are mostly of enumerative combinatorics. A proof of Conjecture \ref{conj:main} is our first main result.

\begin{conjecture}[Conjecture 6.9 in \cite{hanelyetal}]\label{conj:main}
Let $1\leq m\leq k\leq s$, $q\geq 0$, and $0\leq \ell\leq s-1$ be integers. The number of naturally ordered chain forests $\F$ of $[s]$ with $k$ blocks and weight $q$, such that $\gamma(\F,\ell) = m$, is given by 
\begin{equation}\label{eq:CF(q,n,k,l,m)}
|\CF(q,s,k,\ell,m)|=\sum_{i=0}^q(-1)^i\binom{s}{i} \Pi^{s-\ell-m}_{-i+1,s-1-\ell-i} \Pi^{\ell-(k-m)}_{s-\ell-i,s-1-i} \binom{k-1+q-i}{k-1}.
\end{equation}
\end{conjecture}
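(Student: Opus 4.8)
The plan is to prove the identity by a sign-reversing involution / inclusion–exclusion argument that peels off the alternating sum $\sum_i (-1)^i \binom{s}{i}$ on the right-hand side. Concretely, I would first give a combinatorial interpretation of the ``uncorrected'' term (the $i=0$ summand): the product $\Pi^{s-\ell-m}_{1,s-1-\ell}\,\Pi^{\ell-(k-m)}_{s-\ell,s-1}\,\binom{k-1+q}{k-1}$ should count some set of decorated ordered chain forests — or rather a relaxation of them in which the ``naturally ordered'' condition is dropped — with a marked structure recording the statistic $\gamma(\cdot,\ell)=m$, the block count $k$, and the weight $q$. The two $\Pi$ factors presumably encode independent choices on the ``long'' part $[s]\setminus\text{(something of size }\ell)$ and the ``short'' part of size $\ell$, split according to how many of the $m$ relevant blocks and $k-m$ remaining blocks fall on each side; the binomial $\binom{k-1+q}{k-1}$ distributes the weight $q$ among $k$ blocks (weak compositions). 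The main conceptual step is to pin down exactly which over-counted family this product enumerates, so that $\CF(q,s,k,\ell,m)$ is the subfamily satisfying a ``no forbidden pattern'' / ``minimality'' condition equivalent to being naturally ordered.

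Next I would set up the inclusion–exclusion. Each term $(-1)^i\binom{s}{i}$ suggests choosing an $i$-element ``bad set'' $S\subseteq[s]$ of positions where the natural-order condition is violated (e.g.\ a chosen subset of elements each of which is not the minimum of its block, or more precisely a set of ``descents'' forced into the word reading off the forest), and then counting the relaxed objects that are bad at least on $S$. The shifts in the $\Pi$-symbols — note the lower index of $\Pi^{s-\ell-m}$ drops from $1$ to $-i+1$ and the upper indices all decrease by $i$ — are exactly what one expects when $i$ of the $s$ ``slots'' are frozen: the remaining free choices live in an interval shortened by $i$. So the identity I want is
\[
|\CF(q,s,k,\ell,m)| \;=\; \sum_{S\subseteq[s]} (-1)^{|S|}\,\bigl|\{\text{relaxed objects bad at least on } S\}\bigr|,
\]
and the summand depends on $S$ only through $|S|=i$, giving the binomial $\binom{s}{i}$. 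The remaining work is to verify that the count of ``relaxed objects bad at least on $S$'' factors as $\Pi^{s-\ell-m}_{-i+1,s-1-\ell-i}\,\Pi^{\ell-(k-m)}_{s-\ell-i,s-1-i}\,\binom{k-1+q-i}{k-1}$ — a bijective/product-rule check once the combinatorial model is correct.

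An alternative, and possibly cleaner, route is generating-functionology: interpret each $\Pi^a_{b,c}$ factor as a coefficient extraction (these ``$\Pi$'' symbols look like they come from expanding products $\prod_{j=b}^{c}(\text{linear in }t)$ or from the $q$-free shadow of the $\varphi_{r,s,n}$ identity), write both sides as coefficients in a single polynomial or power series identity in an auxiliary variable, and match. In that case the alternating sum $\sum_i(-1)^i\binom{s}{i}x^i=(1-x)^s$ appears naturally and the whole identity reduces to a polynomial identity that can be checked by comparing degrees and leading behaviour, or by a short induction on $s$ (or on $q$, using the Pascal-type recursion $\binom{k-1+q-i}{k-1}=\binom{k-1+q-1-i}{k-1}+\binom{k-2+q-i}{k-2}$). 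I would try the bijective version first and fall back on generating functions if the ``bad set'' bookkeeping becomes unwieldy.

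The main obstacle I anticipate is the first step: identifying the correct over-counted family and the precise notion of ``bad at position $j$'' so that (i) the $i=0$ term counts it, (ii) the inclusion–exclusion over bad sets telescopes to the naturally ordered chain forests, and (iii) the shifted $\Pi$-symbol product is literally the count of objects bad on a fixed $i$-set. All three constraints must be met by one model simultaneously; getting the split of the statistic $\gamma(\F,\ell)=m$ to interact correctly with the freezing of $i$ elements — in particular whether a frozen element may or may not lie among the $\ell$ distinguished positions — is the delicate point, and is where I would expect to iterate on the definition before the bijection falls out cleanly.
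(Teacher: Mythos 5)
Your plan has the right flavor (the paper's proof is indeed an inclusion--exclusion / sign-cancellation argument), but as stated it rests on a model that cannot exist, and it leaves unaddressed the step that is actually the heart of the matter. The proposed identity $|\CF(q,s,k,\ell,m)| = \sum_{S\subseteq[s]}(-1)^{|S|}\,|\{\text{relaxed objects bad at least on } S\}|$ requires each summand (without its sign $(-1)^i$) to be the cardinality of a set. But for $i\geq 1$ the factor $\Pi^{s-\ell-m}_{-i+1,\,s-1-\ell-i}$ is a sum of products of integers drawn from a range containing $0$ and negative numbers, so the $i$-th term is itself a signed quantity with internal cancellation, not a count of ``objects bad on a fixed $i$-set.'' This is precisely why the paper does not use a uniform sign $(-1)^{i}$: in its Proposition on the term-by-term interpretation, each product $i_1\cdots i_{s-\ell-m}\cdot j_1\cdots j_{\ell-(k-m)}$ is unpacked into a family of $A$-distinguished ordered chain forests carrying the sign $(-1)^{|B_A(\F)|}$, where $|B_A(\F)|$ is the number of blocks consisting entirely of elements of the chosen $i$-set $A$ — a statistic of the object, not of $i$. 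Your bookkeeping of ``bad sets'' with summands depending only on $|S|$ cannot reproduce this.

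The second gap is the link between the weight statistic $q$ on the left (defined via internal block orderings: elements smaller than their block's leader) and the factor $\binom{k-1+q-i}{k-1}$ on the right, which you correctly read as distributing $q-i$ among $k$ blocks as weak compositions. Turning those block ``values'' into block ``weight'' is not a product-rule check; it is the main construction of the paper — an iterative algorithm (cyclically shifting suffixes of the forest and reordering a working list $L$) whose induced map $\phi$ is shown to be a bijection with an explicitly characterized image. Even after that, the cancellation is not a telescoping over bad sets: the paper pairs negative-sign objects with positive-sign ones by moving one block into or out of the distinguished part (after applying $\phi$), proves injectivity of this pairing directly, and then obtains surjectivity indirectly by summing the refined identity over $m$ and comparing with the already-known unrefined formula for $|\CF(q,s,k)|$. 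Your fallback via generating functions is also unlikely to be a quick rescue — the paper notes that the generating-function proof of the unrefined identity does not seem to extend to this refinement. So while your instinct about the overall shape (alternating sum, $(1-x)^s$, interval shifts by $i$) is reasonable, the proposal is missing the two ideas that make the proof work: the signed $A$-distinguished model for each summand, and the value-to-weight bijection $\phi$.
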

\addtocounter{theorem}{0}

The left-hand side is the number of combinatorial objects called \textit{ordered chain forests}, which are special ordered sets partitions with several additional properties, and the $\Pi$ terms on the right-hand side are sums of products of integers that can be thought of as elementary symmetric functions evaluated at integers. As mentioned earlier, the problem of proving Ehrhart positivity for panhandle matroids was reduced to Conjecture \ref{conj:main} in \cite{hanelyetal}. Thus, we obtain the following theorem. 

\begin{theorem}\label{thm:PosMain}
    Given positive integers $r\leq s\leq n-1$, the panhandle matroid $\Pan_{r,s,n}$ is Ehrhart positive.
\end{theorem}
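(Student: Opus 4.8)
The plan is to reduce everything to Conjecture \ref{conj:main} (which is proved as a theorem in the body of the paper) via the chain of implications already sketched in the introduction, so the work here is purely bookkeeping. By Theorem \ref{formula for positivity}, we have
\[
\ehr_{\Pan_{r,s,n}}(t)=\frac{n-s}{(n-1)!}\binom{t+n-s}{n-s}\,\varphi_{r,s,n}(t),
\]
and since $\frac{n-s}{(n-1)!}>0$ and $\binom{t+n-s}{n-s}=\frac{1}{(n-s)!}(t+1)(t+2)\cdots(t+n-s)$ is a polynomial in $t$ all of whose coefficients are positive, it suffices to prove that $\varphi_{r,s,n}(t)$ has nonnegative coefficients. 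Thus the first step is to record this observation formally, noting that the product of two polynomials with nonnegative coefficients again has nonnegative coefficients.

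The second step is to rewrite $\varphi_{r,s,n}(t)$, as given in \eqref{phisrlt}, as a nonnegative combination of the quantities $|\CF(q,s,k,\ell,m)|$. Concretely, I would expand each product of binomial coefficients $\binom{s-1-\ell-i+t(s-r-i+1)}{s-1-\ell}\binom{s-1-i+t(s-r-i)}{\ell}$ appearing in \eqref{phisrlt} — these are, up to the substitution $q\mapsto s-r$ and a relabeling, exactly the $\Pi$-products multiplied by the binomial $\binom{k-1+q-i}{k-1}$ that appear on the right-hand side of \eqref{eq:CF(q,s,k,\ell,m)} after summing over the relevant range of $m$ (and $k$). Substituting the identity of Conjecture \ref{conj:main} turns the alternating sum over $i$ in \eqref{phisrlt} into an honest count of chain forests, so that $\varphi_{r,s,n}(t)$ becomes a sum, with coefficients that are products of factorials $(n-2-\ell)!\,\ell!\ge 0$, of cardinalities $|\CF(q,s,k,\ell,m)|$ times monomials $t^{(\cdot)}$ coming from the weight-generating variable $q$. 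Since cardinalities of finite sets are nonnegative and the remaining factors are manifestly nonnegative, every coefficient of $\varphi_{r,s,n}(t)$ is nonnegative, and we are done.

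The only genuine obstacle is making the algebraic match in the second step precise: one must check that the exponents, the ranges of the indices $\ell$, $k$, $m$, $q$, and the argument shifts in the binomial coefficients line up exactly between \eqref{phisrlt} and \eqref{eq:CF(q,s,k,\ell,m)}, and in particular that after invoking Conjecture \ref{conj:main} no leftover negative contributions survive — i.e.\ that the alternating inclusion–exclusion sign $(-1)^i\binom{s}{i}$ is entirely absorbed into the combinatorial count. This dictionary between the two formulas is exactly the content of the reduction carried out in \cite{hanelyetal}, so I would either cite that reduction directly or reproduce its short verification; either way, once Conjecture \ref{conj:main} is available as a theorem, Theorem \ref{thm:PosMain} follows immediately.
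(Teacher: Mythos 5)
Your proposal is correct and follows essentially the same route as the paper: factor out the manifestly positive part via Theorem \ref{formula for positivity}, reduce positivity of $\varphi_{r,s,n}(t)$ to the enumerative identity of Conjecture \ref{conj:main} by citing the reduction carried out in Section 6 of \cite{hanelyetal}, and then invoke the proof of that conjecture given in Section \ref{sec:main_proof}. The paper likewise does not reprove the dictionary between \eqref{phisrlt} and \eqref{eq:CF(q,n,k,l,m)} but defers it to \cite{hanelyetal}, so your treatment matches the intended argument.
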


\noindent In fact, we prove a stronger bound, namely that the Ehrhart polynomial of $\Pan_{r,s,n}$ is bounded below (coefficient-wise) by the Ehrhart polynomial of a product between a simplex and a hypersimplex (Remark \ref{rmk:better positivity}).

This work is in part motivated by the fact that panhandle matroids are special instances of other well-studied families of matroids, including Schubert matroids, lattice-path matroids, and positroids. 
Panhandle matroid polytopes are also instances of the more general family of (weighted) multi-hypersimplices, a family of alcoved polytopes considered in \cite{Lam2005AlcovedPI}. 
In fact, they are polypositroids \cite{lam-postnikov-polypositroids}, a subclass of alcoved polytopes that are conjectured to be Ehrhart positive \cite[Conjecture 6.3]{fjs}. 
Additionally, panhandle matroids are \textit{notched rectangle matroids}, defined in \cite{fan-li} (also known as \textit{cuspidal matroids} in \cite{ferroni2022valuative}), which are conjectured to be Ehrhart positive in both works. Hence, our results, namely Theorem \ref{thm:PosMain}, supports these conjectures. Furthermore, \textit{uniform matroids} are special instances of panhandle matroids as $\Pan_{r,n-1,n}=U_{r,n}$ is the uniform matroid with rank $r$ and ground set $[n]$. On the other hand, we have $\Pan_{r,r,n}$ is the minimal matroid $T_{r,n}$. Therefore, Theorem \ref{thm:PosMain} generalizes and interpolates between the corresponding Ehrhart positivity results of Ferroni showing that both $T_{r,n}$ and $U_{r,n}$ are Ehrhart positive \cite{ferroniPositive,FerroniMinimal}.

We then continue to discuss a related problem to Conjecture \ref{conj:main}. 
Ferroni posed in \cite{FerroniMinimal} a conjecture asserting that for a connected matroid $M$ with rank $r$ and $n$ elements, the Ehrhart polynomial of $\Po_M$ is coefficient-wise bounded below and above by the Ehrhart polynomial of the corresponding minimal matroid and the uniform matroid $U_{r,n}$, respectively.
In general, the lower bound is not true as Ferroni himself proved that there are matroids which fail to be Ehrhart positive \cite{FerroniMatroids2022} while the Ehrhart polynomials of minimal matroids have positive coefficients \cite{FerroniMinimal}. 
The second main result of this paper is to confirm Ferroni's Ehrhart-coefficient upper-bound conjecture, for the class of paving matroids.

\begin{theorem}\label{thm:UpperBound}
    Let $M$ be a paving matroid of rank $r$ with ground set $[n]$. Then
    \[
    \ehr_M(t) \preceq  \ehr_{U_{r,n}}(t).
    \]
\end{theorem}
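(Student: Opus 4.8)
The plan is to leverage the slicing description of paving matroid polytopes from \cite{hanelyetal}: the matroid polytope $\Po_M$ of a paving matroid $M$ of rank $r$ on $[n]$ is obtained from the hypersimplex $\Delta_{r,n}$ (which equals $\Po_{U_{r,n}}$) by removing, for each stressed hyperplane $H$ of $M$, a piece that is itself the matroid polytope of a panhandle matroid $\Pan_{r,|H|,n}$ (or rather a translate/restriction thereof living on a coordinate subspace indexed by $H$). Since these slices are carved off by genuine affine hyperplanes and the pieces have disjoint interiors, inclusion–exclusion on lattice points gives, for every $t\in\Z_{\geq 0}$,
\[
\ehr_M(t) = \ehr_{U_{r,n}}(t) - \sum_{H}\Big(\ehr_{\Pan_{r,|H|,n}}(t) - (\text{lattice points on the cutting hyperplane})\Big),
\]
as worked out in \cite{hanelyetal}; the key point is that this identity is \emph{polynomial} in $t$, so it suffices to show each bracketed correction term is a polynomial in $t$ with nonnegative coefficients. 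Thus Theorem \ref{thm:UpperBound} reduces to a coefficient-wise positivity statement about (differences of) panhandle Ehrhart polynomials.

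Next I would isolate exactly what positivity is needed. From Theorem \ref{formula for positivity} we have $\ehr_{\Pan_{r,s,n}}(t)=\frac{n-s}{(n-1)!}\binom{t+n-s}{n-s}\varphi_{r,s,n}(t)$, and the lattice points on the slicing hyperplane form the Ehrhart polynomial of a lower-dimensional matroid polytope with an analogous panhandle-type description. The cleanest route is to show that the whole correction term associated to a stressed hyperplane of size $s$ equals a nonnegative-coefficient polynomial; combinatorially this should be the Ehrhart polynomial (or a positive combination of Ehrhart polynomials of lattice polytopes known to be Ehrhart positive, e.g. products of simplices and hypersimplices as in Remark \ref{rmk:better positivity}) counting the integer points strictly inside the sliced-off panhandle piece. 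Concretely, I expect the correction term to be expressible via the chain-forest enumeration $|\CF(q,s,k,\ell,m)|$ of Conjecture \ref{conj:main}: summing \eqref{eq:CF(q,n,k,l,m)} against the appropriate binomial/simplex factors produces a manifestly nonnegative polynomial in $t$, because each $|\CF(\dots)|$ is a nonnegative integer (it counts chain forests) and the remaining factors $\binom{t+\text{const}}{\text{const}}$ have nonnegative coefficients. So once Conjecture \ref{conj:main} is in hand (our first main result), the positivity of every correction term follows, and summing over all stressed hyperplanes $H$ preserves coefficient-wise inequalities, yielding $\ehr_M(t)\preceq \ehr_{U_{r,n}}(t)$.

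In carrying this out the steps would be: (1) recall from \cite{hanelyetal} the precise inclusion–exclusion identity expressing $\ehr_M(t)$ as $\ehr_{U_{r,n}}(t)$ minus a sum of panhandle corrections, and verify the slices meet only along lower-dimensional faces so no overcounting occurs; (2) rewrite each correction term as a sum over chain-forest statistics using \eqref{eq:CF(q,n,k,l,m)}, pulling out the $t$-dependence into binomial coefficients $\binom{t+a}{a}$ with nonnegative-integer coefficients; (3) conclude each correction term is a polynomial in $t$ with nonnegative coefficients; (4) sum over stressed hyperplanes and read off the coefficient-wise bound.

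The main obstacle is step (1)–(2): making the bookkeeping of the slicing exact. When two or more stressed hyperplanes of the paving matroid interact, the pieces removed are honestly disjoint in their interiors (this is part of why paving matroids are so tractable — distinct stressed hyperplanes of a paving matroid intersect in sets of size $<r$, so the corresponding panhandle pieces are separated), but one must verify that the "lattice points on the cutting hyperplane" get counted consistently and that the net correction for each $H$ is genuinely the point-count of the \emph{open} panhandle slice, not an alternating sum that could a priori have negative coefficients. Equivalently, the subtle point is to match the Ehrhart-theoretic correction term to the chain-forest generating polynomial on the nose, so that nonnegativity is inherited from $|\CF(q,s,k,\ell,m)|\in\Z_{\geq 0}$ rather than from a more delicate cancellation; handling the boundary/hyperplane contribution is where the careful work lies, and it is essentially the same reduction already performed for Ehrhart positivity of the panhandle matroids themselves, now applied to each sliced piece of $M$.
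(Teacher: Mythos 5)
Your structural reduction is the right one and matches the paper's route: by \cite{hanelyetal} (Theorem \ref{thm:Ehrhart-paving-improved-formula}, applied iteratively via Lemma \ref{lem:RelPaving}, which is cleaner than a single global inclusion--exclusion and sidesteps the boundary bookkeeping you worry about), $\ehr_{U_{r,n}}(t)-\ehr_M(t)$ is a sum over stressed hyperplanes $H$ (with $|H|=s$) of correction terms $\frac{n-s}{(n-1)!}\binom{t-1+n-s}{n-s}\psi_{r,s,n}(t)$, so the theorem reduces to coefficientwise nonnegativity of $\psi_{r,s,n}(t)$.

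The genuine gap is your claim that this positivity ``follows once Conjecture \ref{conj:main} is in hand.'' It does not. The correction term equals $\ehr_{\Pan_{r,s,n}}(t)-\ehr_{\Delta_{r-1,s}\times\Delta_{1,n-s}}(t)$ (Remark \ref{rmk:better positivity}): it is a \emph{difference} of two Ehrhart polynomials, and its nonnegativity is strictly stronger than Ehrhart positivity of $\Pan_{r,s,n}$ (indeed the paper notes that this stronger statement alone would already imply Theorem \ref{thm:PosMain}). Conjecture \ref{conj:main} controls $\varphi_{r,s,n}(t)$ via the counts $|\CF(q,s,k,\ell,m)|$, but the positivity of $\psi_{r,s,n}(t)$ reduces (per \cite{hanelyetal}) to the nonnegativity of the \emph{different} alternating sum \eqref{eq:upperExpression}, in which the factor $\Pi^{s-\ell-m}_{-i+1,s-1-\ell-i}$ is replaced by $\Pi^{s-\ell-m}_{-i,s-\ell-2-i}$; this is not a specialization of \eqref{eq:CF(q,n,k,l,m)} and does not inherit a combinatorial interpretation from it. The paper must prove a new enumerative identity, Theorem \ref{thm:Uppermain}, identifying \eqref{eq:upperExpression} with $|\CF^1(q+1,s+1,k+1,\ell,m+1)|$ for a modified class of chain forests, and this requires rerunning the whole machinery (a new summand interpretation, Proposition \ref{prop:upper-i-interpretation}; a new image characterization, Proposition \ref{prop:phi0-image-upper}; and a new sign-reversing bijection) rather than citing Conjecture \ref{conj:main}. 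Without this additional argument, your step (2)--(3) does not go through: a priori the correction term could be an alternating expression with negative coefficients, which is exactly the possibility the new theorem rules out.
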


\noindent The proof of this theorem relies on a general formula for the Ehrhart polynomial of a paving matroid that was first presented in \cite{hanelyetal}.
Again, a proof of Theorem \ref{thm:UpperBound} can be reduced to a combinatorial result akin to Conjecture \ref{conj:main}, which we resolve using similar methods.
We further show that the relaxation of an Ehrhart-positive matroid is also Ehrhart positive (see Section \ref{sec:upper_bound} for relevant definitions and results).

The techniques in this paper lay in the realm of enumerative combinatorics, and the proofs stand without particular knowledge of matroids, matroid polytopes, or Ehrhart theory. These topics are simply the sole motivation of this paper. Therefore, we omit detailed background information on these topics and refer to \cite{hanelyetal} for the precise definition of panhandle matroids and an overview of their Ehrhart theory.

The paper is structured as follows:
\begin{itemize}
    
    \item In Section \ref{sec:prelim}, we define \textit{ordered chain forests}, which are combinatorial objects that emerge from specific set partitions. We also define auxiliary combinatorial objects which are not directly present in the statement of Conjecture \ref{conj:main} but are required to support our proof of Conjecture \ref{conj:main}. 
    
    \item Section \ref{sec:phi} is concerned with understanding each term within the summation appearing in Conjecture \ref{conj:main}. 
    We do so by constructing a bijection between objects defined in Section \ref{sec:prelim}.

    \item Section \ref{sec:main_proof} presents the proof of Conjecture \ref{conj:main}, implying that panhandle matroids are Ehrhart positive, via a sign-reversing involution type argument.
    
    \item In Section \ref{sec:upper_bound}, we use the tools developed in previous sections to confirm Ferroni's Ehrhart-coefficient upper-bound conjecture in the case of paving matroids by proving another enumerative combinatorial statement akin to Conjecture \ref{conj:main}. We further show that Ehrhart-positivity is preserved through stressed-hyperplane relaxations.

\end{itemize}

\section{Toward Ehrhart positivity: Combinatorial objects  }\label{sec:prelim}

We recall that in Section 6 of \cite{hanelyetal}, the authors reduced the problem of proving Ehrhart positivity for panhandle matroids (Theorem \ref{thm:PosMain}) to a purely combinatorial problem (see Conjecture \ref{conj:main}).
We define \textit{ordered chain forests}, emerging as particular set partitions, which are intimately related to the conjecture. 
Specifically, we enumerate these objects whose cardinality can be expressed in terms of a weighted alternating sum over products involving the sum of all possible products of integers in a certain range, denoted $\Pi^n_{a,b}$. 
Additionally, we introduce \textit{valued ($A$-distinguished) ordered chain forests}, which will turn out to provide an interpretation of each term in the alternating sum.

We further note that (unordered) chain forests were presented in \cite{hanelyetal} and are used to state Conjecture \ref{conj:main}. 
The more general objects, which we refer to as ordered chain forests, are introduced to support our proof of the conjecture.

Since we introduce many definitions and notation in this section, we include a notation table for convenience.

\begin{table}[!ht]
\label{notation_table}
\centering
\caption{Notation Table}
\begin{tabular}{ | m{8em} | m{33em}|} 
\hline
\textbf{Symbol} & \textbf{Meaning} \\ \hline

 $\gamma(\F,\ell)$ & the number of trailers that appear after the $\ell^{th}$ position (when writing the sequence of numbers obtained by considering the elements as they appear in the ordered chain forest $\F$)\\
 \hline
 $\CF(q,s,k)$ ($\CF(q,s,k,\ell,m)$) & the set of naturally ordered chain forests of $[s]$ with $k$ blocks and weight $q$ (and $\gamma(\F,\ell)=m$) \\
 \hline
 $\Pi_{a,b}^n$ &  the sum of all possible products of $n$ integers lying between $a$ and $b$ \\
 \hline
 $B(\F)$ & the set of blocks of an ordered chain forest $\F$ \\
 \hline
 $v(\mathsf{B})$ & a nonnegative value assigned to a block $\mathsf{B}$  \\
  \hline
  $\wt(\mathsf{B})$ & the weight of a block $\mathsf{B}$\\
  \hline
  $DCF(q,s)$ ($DCF(q,s,k,\ell,m)$) & the set of $A$-distinguished ordered chain forests $(\F,v,A)$ of $[s]$ for some subset $A\subset [s]$ such that $|A| + \sum_{\mathsf{B}\in B(\F)} v(\mathsf{B}) = q$ (with $k$ blocks and $\gamma(\F,\ell) = q$) \\
  \hline
\end{tabular}
\end{table}

\begin{definition}
An \defterm{ordered chain forest} of $[s]$ is an ordered partition of $[s]$ into a set of blocks, where each block is itself internally ordered.\\

\noindent We write an ordered chain forest $\F$ as a sequence of blocks $\F=\mathsf{B}_1\cdots \mathsf{B}_k$.
Example \ref{ex:examples} illustrates some of these definitions.

\begin{enumerate}

\item The first and last element of a block is called its \defterm{leader} and \defterm{trailer}, respectively.

\item An ordered chain forest is said to be \defterm{naturally ordered} if the blocks are ordered increasingly according to their leaders. 

\item The \defterm{weight of a block} $\mathsf{B}$, denoted $\wt(\mathsf{B})$, in an ordered chain forest is the number of elements of the block that are less than its leader. 
If an element is less than the leader of its block, we say this element \defterm{contributes weight} to the block.

\item The \defterm{weight of an ordered chain forest} is the sum of the weights of its blocks.
\end{enumerate}

\begin{example}\label{ex:examples}
Consider the ordered chain forests $\F_1=[1,3][4,2][5]$ and $\F_2=[4,2][1,3][5]$.
Observe that $\F_1$ is naturally ordered, while $\F_2$ is not, and they both are distinct from one another because the ordering of their blocks differs. 
Additionally, $[3,1][4,2][5]$ is distinct from $[1,3][4,2][5]$ as well since the ordering of the elements within each block matters.  
Now, consider the ordered chain forest $\F=[1,3][5,2,4][7,6][8].$ 
The weights of each block are 0, 2, 1, and 0, respectively.
The total weight of $\F$ is 3.
\end{example}
\end{definition}

Given an ordered chain forest $\F=\mathsf{B}_1\cdots \mathsf{B}_k$ of $[s]$, for $0\leq l \leq s-1$, we define
\[
\gamma(\F,\ell): = \left|\left\{j \in [k] \mid \sum_{i=1}^j |\mathsf{B}_i| >\ell\right\}\right|.
\]
In other words, if we make note of the sequence of numbers obtained by writing the elements as they appear in $\F$, then $\gamma(\F,\ell)$ is the number of trailers that appear after the $\ell^{th}$ position.
For example, if $\F=[1,3][4,2][5]$, then the corresponding sequence of numbers is $1\ 3\ 4\ 2\ 5$ and the trailers are 3, 2, and 5.
Here, we say 1 is in the first position, 3 is in the second position, 4 is in the third position, and so on. Thus, we have that 
\[
\gamma(\F,0) = 3,\, \gamma(\F,1) = 3,\, \gamma(\F,2) = 2,\, \gamma(\F,3) = 2,\, \gamma(\F,4) = 1. \]
\smallskip

\begin{definition}
    For integers $q\geq 0$ and $1\leq k\leq s$, we define $\CF(q,s,k)$ to be the set of naturally ordered chain forests of $[s]$ with $k$ blocks and weight $q$.
\end{definition}

The set $\CF(q,s,k)$ was defined by Ferroni \cite{ferroniPositive} (although with different notation), and revisited in \cite{hanelyetal} where it is also denoted by $\CF(q,s,k)$.
In both papers, a certain combinatorial expression was shown to enumerate the quantity $|\CF(q,s,k)|$.
In order to present this particular result, we introduce the following notation.

\begin{definition}
    Let $a\leq b$ and $n$ be integers. We define the quantity $\Pi_{a,b}^n$ as 
    \[
    \Pi_{a,b}^n = \sum_{a\leq i_1<\cdots<i_n \leq b} i_1\cdots i_n,
    \]
    i.e., as the sum of all possible products of $n$ integers between $a$ and $b$.
    We define $\Pi^0_{a,b} = 1$ and $\Pi^n_{a,b} = 0$ if $n<0$ or $n > b-a+1$.
    For certain choices of parameters in this paper, we sometimes encounter terms of the form $\Pi_{a+1,a}^n$. In this special case, we use the convention that $\Pi_{a+1,a}^n = 1$.
\end{definition}

\noindent This notation now allows us to enumerate the set of naturally ordered chain forests. 

\begin{theorem}[\cite{ferroniPositive,hanelyetal}]\label{lah theorem}
Let $q\geq 0$, $1\leq k\leq s$ be integers. The quantity $|\CF(q,s,k)|$ is given by 
\begin{equation}\label{eq:Lah}
|\CF(q,s,k)|=\sum_{i=0}^q(-1)^i\binom{s}{i} \Pi^{s-k}_{-i+1,s-1-i} \binom{k-1+q-i}{k-1}.
\end{equation}
\end{theorem}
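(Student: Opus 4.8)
The plan is to compute the weight generating polynomial $F_{s,k}(t):=\sum_{q\ge 0}|\CF(q,s,k)|\,t^{q}$ in closed form and then compare the coefficient of $t^{q}$ with the claimed alternating sum. The first observation is that a naturally ordered chain forest of $[s]$ with $k$ blocks is the same datum as a set partition of $[s]$ into $k$ blocks together with a linear order on each block, since the order of the blocks is forced by their leaders. In a block of size $j$ the leader may be any of the $j$ elements; if it is the $(w{+}1)$-st smallest, then the block has weight $w$ and its remaining $j-1$ elements may be arranged arbitrarily, so a single $j$-block contributes $(j-1)!\,(1+t+\cdots+t^{j-1})$ to the weight generating polynomial. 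Summing over set partitions (dividing by $k!$ because the blocks are unordered) gives the exponential generating function
\[
\sum_{s\ge k}F_{s,k}(t)\,\frac{x^{s}}{s!}
=\frac{1}{k!}\Biggl(\,\sum_{j\ge 1}(j-1)!\,(1+t+\cdots+t^{j-1})\,\frac{x^{j}}{j!}\Biggr)^{\!k}
=\frac{1}{k!}\Biggl(\frac{1}{1-t}\,\log\frac{1-tx}{1-x}\Biggr)^{\!k},
\]
using $1+t+\cdots+t^{j-1}=\tfrac{1-t^{j}}{1-t}$ and $\sum_{j\ge 1}\tfrac{1-t^{j}}{j}\,x^{j}=\log\tfrac{1-tx}{1-x}$.

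To extract $F_{s,k}(t)$ I would substitute $v=\tfrac{(1-t)x}{1-tx}$, so that $1-v=\tfrac{1-x}{1-tx}$ and hence $\log\tfrac{1-tx}{1-x}=-\log(1-v)$. The classical identity $\tfrac{1}{k!}\bigl(-\log(1-v)\bigr)^{k}=\sum_{m\ge k}\stir{m}{k}\tfrac{v^{m}}{m!}$ for the unsigned Stirling numbers of the first kind $\stir{m}{k}$, together with $v^{m}=\tfrac{(1-t)^{m}x^{m}}{(1-tx)^{m}}$, rewrites the generating function as $\sum_{m\ge k}\stir{m}{k}\tfrac{(1-t)^{m-k}}{m!}\cdot\tfrac{x^{m}}{(1-tx)^{m}}$. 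Reading off the coefficient of $x^{s}$ and using $[x^{s-m}](1-tx)^{-m}=\binom{s-1}{m-1}t^{s-m}$ yields the closed form
\[
F_{s,k}(t)=\sum_{m\ge k}\frac{s!}{m!}\,\stir{m}{k}\,\binom{s-1}{m-1}\,(1-t)^{m-k}\,t^{s-m}.
\]

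It remains to reconcile this with the right-hand side of the theorem. Resumming the claimed formula over $q$ with $\sum_{p\ge 0}\binom{k-1+p}{k-1}t^{p}=(1-t)^{-k}$ shows that the proposed value of $|\CF(q,s,k)|$ is exactly the coefficient of $t^{q}$ in $(1-t)^{-k}P_{s,k}(t)$, where $P_{s,k}(t):=\sum_{i\ge 0}(-1)^{i}\binom{s}{i}\,\Pi^{s-k}_{-i+1,\,s-1-i}\,t^{i}$ is a polynomial in $t$. Hence the theorem reduces to the polynomial identity
\[
P_{s,k}(t)=(1-t)^{k}F_{s,k}(t)=\sum_{m\ge k}\frac{s!}{m!}\,\stir{m}{k}\,\binom{s-1}{m-1}\,(1-t)^{m}\,t^{s-m}.
\]
To prove this I would write $\Pi^{s-k}_{-i+1,s-1-i}=e_{s-k}(-i+1,\dots,s-1-i)$ as the coefficient of $w^{k-1}$ in $\prod_{j=1}^{s-1}(w+j-i)=(-1)^{s-1}(i-w-1)^{\underline{s-1}}$, where $x^{\underline{\ell}}=x(x-1)\cdots(x-\ell+1)$ and $x^{\overline{\ell}}=x(x+1)\cdots(x+\ell-1)$; expand the falling factorial by the Vandermonde identity $(i-w-1)^{\underline{s-1}}=\sum_{\ell}\binom{s-1}{\ell}\,i^{\underline{\ell}}\,(-w-1)^{\underline{s-1-\ell}}$; apply $\sum_{i}\binom{s}{i}(-t)^{i}\,i^{\underline{\ell}}=s^{\underline{\ell}}(-t)^{\ell}(1-t)^{s-\ell}$ (obtained by differentiating $(1+z)^{s}$ and evaluating at $z=-t$); and finally read off the coefficient of $w^{k-1}$ via $[w^{k-1}](w+1)^{\overline{n}}=\stir{n+1}{k}$. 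This collapses $P_{s,k}(t)$ to $\sum_{\ell}\binom{s-1}{\ell}\,s^{\underline{\ell}}\,\stir{s-\ell}{k}\,t^{\ell}(1-t)^{s-\ell}$, which is the claimed expression after reindexing by $m=s-\ell$.

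The step I expect to require the most care is this last identity: the generating-function setup and the change of variable $v=\tfrac{(1-t)x}{1-tx}$ are routine once spotted, but the final step involves several interlocking rising/falling-factorial and Stirling-number expansions, and one must track signs carefully and keep every coefficient extraction inside the range where the boundary conventions for $\Pi^{n}_{a,b}$ (in particular $\Pi^{0}_{a,b}=1$ and $\Pi^{n}_{a+1,a}=1$) remain consistent.
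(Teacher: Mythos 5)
Your argument is correct, but note that the paper itself does not prove Theorem \ref{lah theorem}: it only cites it, remarking that the formula was first obtained by a generating-function argument in \cite{ferroniPositive} and later proved by combinatorial means in \cite{hanelyetal}. Your route is essentially the generating-function one attributed to Ferroni: identifying a naturally ordered chain forest with a set partition into internally ordered blocks is valid (the block order is indeed forced by the leaders), the block weight enumerator $(j-1)!(1+t+\cdots+t^{j-1})$ is right, and the substitution $v=\tfrac{(1-t)x}{1-tx}$ together with $\tfrac{1}{k!}(-\log(1-v))^k=\sum_{m\ge k}\stir{m}{k}\tfrac{v^m}{m!}$ correctly yields the closed form $F_{s,k}(t)=\sum_{m\ge k}\tfrac{s!}{m!}\stir{m}{k}\binom{s-1}{m-1}(1-t)^{m-k}t^{s-m}$. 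The reduction of the theorem to $P_{s,k}(t)=(1-t)^kF_{s,k}(t)$ is also sound: the truncation of the sum at $i=q$ is automatic in the convolution with $(1-t)^{-k}$, and your final computation checks out, since $\Pi^{s-k}_{-i+1,s-1-i}=[w^{k-1}]\prod_{j=1}^{s-1}(w+j-i)$, $\sum_i\binom{s}{i}(-t)^i\,i^{\underline{\ell}}=s^{\underline{\ell}}(-t)^{\ell}(1-t)^{s-\ell}$, and $[w^{k-1}](w+1)^{\overline{n}}=\stir{n+1}{k}$ combine with signs cancelling exactly as you claim; the degenerate cases ($k=s$, $s=1$) are consistent with the stated conventions for $\Pi^n_{a,b}$. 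As for what each approach buys: your generating-function derivation is short, self-contained, and produces a manifestly structured closed form for the full weight enumerator, whereas the combinatorial proof in \cite{hanelyetal} sets up the sign-cancellation machinery that the present paper adapts and extends to the refined identity of Conjecture \ref{conj:main}, where --- as the authors explicitly note --- the generating-function method does not seem to extend directly.
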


\noindent This was shown through a generating function approach in \cite{ferroniPositive} and later by combinatorial means in \cite{hanelyetal}.
Additionally, a refinement of the set $\CF(q,s,k)$ was defined in \cite{hanelyetal} for the purpose of articulating Conjecture \ref{conj:main}, which is  a generalization of Theorem \ref{lah theorem}.

\begin{definition} 
Let $1\leq m\leq k\leq s$, $q\geq 0$, and $0\leq \ell\leq s-1$ be integers.
We denote the set of naturally ordered chain forests $\F$ of $[s]$ with $k$ blocks with $\gamma(\F,\ell) = m$ and weight $q$ by $\CF(q,s,k,\ell,m)$.
\end{definition} 

We restate Conjecture \ref{conj:main} below showing that a certain expression, which arose from an analysis of the Ehrhart polynomials of panhandle matroids in \cite{hanelyetal}, enumerates the quantity $|\CF(q,s,k,\ell,m)|$.

{
\renewcommand{\thetheorem}{\ref{conj:main}}
\begin{conjecture}[Conjecture 6.9 in \cite{hanelyetal}]
Let $1\leq m\leq k\leq s$, $q\geq 0$, and $0\leq \ell\leq s-1$ be integers. The number of naturally ordered chain forests $\F$ of $[s]$ with $k$ blocks and weight $q$, such that $\gamma(\F,\ell) = m$, is given by 
\begin{equation}
|\CF(q,s,k,\ell,m)|=\sum_{i=0}^q(-1)^i\binom{s}{i} \Pi^{s-\ell-m}_{-i+1,s-1-\ell-i} \Pi^{\ell-(k-m)}_{s-\ell-i,s-1-i} \binom{k-1+q-i}{k-1}.
\end{equation}
\end{conjecture}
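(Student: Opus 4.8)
The strategy is to prove Conjecture~\ref{conj:main} by a sign-reversing involution, mirroring—but substantially generalizing—the combinatorial proof of Theorem~\ref{lah theorem} given in \cite{hanelyetal}. The first step is to find a combinatorial model for the right-hand side of \eqref{eq:CF(q,s,n,k,l,m)}. Reading off the formula term by term: the factor $\binom{k-1+q-i}{k-1}$ counts weak compositions of $q-i$ into $k$ parts (equivalently, a way to distribute $q-i$ units of ``extra value'' among the $k$ blocks); the product $\Pi^{s-\ell-m}_{-i+1,\,s-1-\ell-i}\,\Pi^{\ell-(k-m)}_{s-\ell-i,\,s-1-i}$ should encode the choices of ``weight-contributing'' elements for the $k$ blocks, split according to whether a block's trailer sits among the first $\ell$ positions or after them; and $(-1)^i\binom{s}{i}$ is the sign and the choice of an $i$-element ``distinguished'' subset $A\subseteq[s]$. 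This is precisely the role of the \emph{valued ($A$-distinguished) ordered chain forests} $DCF(q,s,k,\ell,m)$ promised in the introduction: an object is a triple $(\F,v,A)$ where $\F$ is a (not-necessarily-naturally-ordered) chain forest, $v$ assigns nonnegative values to blocks, $A$ is a distinguished subset, and $|A|+\sum_{\mathsf B} v(\mathsf B)+\wt(\F)=q$, with the $\gamma(\F,\ell)=m$ condition controlling the split of the $\Pi$-factors. So Step~1 is: set up $DCF$ carefully and prove a clean ``product rule'' lemma showing that $\sum_i (-1)^i |DCF_i(q,s,k,\ell,m)|$ equals the RHS of \eqref{eq:CF(q,s,n,k,l,m)}, where $DCF_i$ is the part with $|A|=i$. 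This is the content I expect Section~\ref{sec:phi} to handle via an explicit bijection.

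**The involution.** Step~2 is to construct a sign-reversing involution $\Phi$ on $DCF(q,s,k,\ell,m)$ whose fixed points are exactly the objects with $A=\0$ and $v\equiv 0$ that are \emph{naturally ordered}—i.e., a copy of $\CF(q,s,k,\ell,m)$. The design principle (following the $\CF(q,s,k)$ case) is: scan the blocks of $\F$ in the order they appear; look for the first ``defect,'' namely either a block that is out of natural order (its leader is larger than a later block's leader, or more precisely the first place the leader-sequence descends), a block carrying positive value $v(\mathsf B)>0$, or an element of $A$. Depending on which defect type comes first, $\Phi$ either (a) merges/unmerges a distinguished element into an adjacent block as a weight-contributing element, trading $|A|$ against $\wt(\F)$; (b) converts a unit of block value $v(\mathsf B)$ into a structural move that also flips $|A|$ by one; or (c) performs a block transposition to move toward natural order while compensating elsewhere. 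The crucial bookkeeping is that every move changes $|A|$ by exactly $\pm 1$ (hence reverses the sign $(-1)^i$) while preserving $s$, $k$, $q$, $\ell$, and—most delicately—$\gamma(\F,\ell)=m$. Preserving $\gamma(\F,\ell)$ under block transpositions is subtle because swapping two blocks can move a trailer across the $\ell$-th position; the involution must be defined so that any such swap is confined to blocks lying entirely before position $\ell$ or entirely after it, or else the two affected trailers' positions compensate. This is why the statement tracks $m$ separately via the two $\Pi$ factors: the involution should act ``within'' each of the two regions.

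**The main obstacle and closing.** I expect the hard part to be exactly that compatibility: showing the involution can be localized so that it never disturbs $\gamma(\F,\ell)$, while still being a genuine involution with the right fixed-point set. In the $\CF(q,s,k)$ proof there is no $\ell$, so block transpositions are unconstrained; here we need a careful case analysis on whether the ``active'' block straddles, precedes, or follows the cut at position $\ell$, and possibly a priority ordering among the three defect types chosen precisely to make $\Phi^2=\id$ with $\gamma$-invariance. A secondary technical point is handling the boundary conventions ($\Pi^n_{a,b}=0$ out of range, $\Pi^n_{a+1,a}=1$, and edge cases like $m=k$, $m=1$, $\ell=0$, or $\ell=s-1$), which should be checked to confirm the formula degenerates correctly—in particular, summing \eqref{eq:CF(q,s,n,k,l,m)} over $m$ must recover Theorem~\ref{lah theorem}, giving a useful sanity check. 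Once the involution is shown to be well-defined, sign-reversing, and to have fixed-point set $\CF(q,s,k,\ell,m)$, the conjecture follows immediately by the standard cancellation argument. Finally, plugging this into the reduction from \cite{hanelyetal} yields positivity of $\varphi_{r,s,n}(t)$ and hence Theorem~\ref{thm:PosMain}.
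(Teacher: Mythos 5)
Your overall framework---interpret each summand of the right-hand side by signed valued $A$-distinguished forests and then cancel signs so that only objects identifiable with $\CF(q,s,k,\ell,m)$ survive---is indeed the strategy of the paper. However, the sign model on which your Step~2 is built is incorrect, and the constructions that actually make the argument work are missing. The $i^{\text{th}}$ summand is \emph{not} $(-1)^i$ times a cardinality: the factor $\Pi^{s-\ell-m}_{-i+1,s-1-\ell-i}$ is a sum of products drawn from a range containing negative integers, so the products themselves carry varying signs. The correct per-object sign (Proposition \ref{prop:i-interpretation}) is $(-1)^{|B_A(\F)|}$, the number of blocks consisting entirely of elements of $A$, and it varies within a fixed $|A|=i$. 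Consequently a cancellation move must change the number of distinguished \emph{blocks} by one, i.e., move an entire block into or out of $A$; your proposed moves, which ``change $|A|$ by exactly $\pm 1$'' by trading a single element of $A$ against weight or against a unit of block value, would not reverse the true sign, so the involution you sketch cannot be sign-reversing for the quantity being summed.

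The second missing ingredient is the device that links these signed objects to weight-$q$ naturally ordered forests: in the paper's $DCF$ every block has weight zero (the budget $q$ is stored in $|A|$ and the block values), and the identification with $\CF(q,s,k,\ell,m)$ is achieved only through the processing algorithm (Algorithm \ref{alg}) and the resulting bijection $\phi$, whose image is characterized in Proposition \ref{prop:phi-image}; the cancellation map is then a $\phi$-conjugated toggle of the last non-distinguished block or the first distinguished block, which is what makes $k$, $\ell$, and $\gamma(\F,\ell)=m$ automatically preserved. Nothing in your outline plays the role of $\phi$, and your definition of $DCF$ (allowing block weight inside the budget) does not by itself produce the needed ``product rule'' because of the sign issue above. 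Finally, note that the paper never verifies $\Phi^2=\id$ or surjectivity directly: it proves injectivity of the toggle map and then deduces surjectivity by summing over $m$ and comparing with the known unrefined identity of Theorem \ref{lah theorem}---the step you relegate to a ``sanity check'' is in fact load-bearing. So the proposal points in the right general direction but omits the decisive constructions, and its central cancellation step, as written, fails on the sign bookkeeping.
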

}

We again emphasize that the contents of Section 6 in \cite{hanelyetal} show that the truth of Conjecture \ref{conj:main} implies Theorem \ref{thm:PosMain}. 
In Section \ref{sec:main_proof}, we prove Conjecture \ref{conj:main}, thus settling the Ehrhart positivity conjecture for panhandle matroids.
Although the expression appearing in Equation (\ref{eq:CF(q,n,k,l,m)}) is similar to Equation (\ref{eq:Lah}), and different proofs for Equation (\ref{eq:Lah}) are given in \cite{ferroniPositive} and \cite{hanelyetal}, the methods of these two proofs do not seem to extend directly to prove Conjecture \ref{conj:main}.
The proof we present here is purely combinatorial, and hence more in line with the ideas in \cite{hanelyetal}.
However, our approach is novel and our proof is substantially different.
Our methods use only combinatorial techniques, using auxiliary combinatorial objects and sophisticated manipulations. 
Our proofs can be viewed as another example of counting with signs (see for instance section 2.2 in \cite{sagan2020art}), although in our case the general argument is rather involved.
Our approach relies on the following definitions, which are auxiliary combinatorial objects that we will need to enumerate $\CF(q,s,k,\ell,m)$.

\begin{definition}
For an ordered chain forest $\F=\mathsf{B}_1\cdots \mathsf{B}_k$, let $B(\F)$ be the set of blocks of $\F$.    
\begin{enumerate}

\item A \defterm{valued ordered chain forest} is a pair $(\F,v)$ where $\F$ is an ordered chain forest and $v:B(\F) \longrightarrow \mathbb{Z}_{\geq 0}$ is a function assigning a nonnegative integer value to each block.

\item For a block $\mathsf{B}\in B((\F,v))$ of a valued ordered chain forest, we call $v(\mathsf{B})$ the \defterm{value} of $\mathsf{B}$.

\item The \defterm{weight of a valued ordered chain forest} $(\F,v)$ is the sum of weights of blocks that make up $(\F,v)$ and $\sum_{\mathsf{B}\in B((\F,v))} v(\mathsf{B})$.

\item At times we denote a valued ordered chain forest $(\F,v)$ as $(\F,v)=\mathsf{B}_1^{v(\mathsf{B}_1)}\cdots \mathsf{B}_k^{v(\mathsf{B}_k)}$.

\end{enumerate}

\end{definition}

\begin{definition}\label{A-dist forests}

Let $A\subset [s]$.
An \defterm{$A$-distinguished ordered chain forest} of $[s]$ is a pair $(\F,A)$ where $\F$ is an ordered chain forest, $\F=\mathsf{B}_1\cdots \mathsf{B}_r\mathsf{C}_1\cdots \mathsf{C}_p$, where $0\leq r,p \leq s$, with the following properties:

    \begin{itemize}
    
        \item the blocks $\mathsf{B}_i$ consist only of elements in $[s]\setminus A$,
        
        \item the blocks $\mathsf{C}_i$ consist only of elements in $A$,
        
        \item each block has weight 0, i.e., the leader of each block is also the smallest element of the block,
        
        \item the blocks $\mathsf{B}_1\cdots \mathsf{B}_r$ are ordered increasingly according to their leader, and 
        
        \item the blocks $\mathsf{C}_1\cdots \mathsf{C}_p$ are ordered decreasingly according to their leader.
        
    \end{itemize}

\noindent A \defterm{valued $A$-distinguished ordered chain forest} of $[s]$ is a triple $(\F,v,A)$, where $(\F,A)$ is an $A$-distinguished ordered chain forest and $v$ is a function $v:B(\F) \longrightarrow \mathbb{Z}_{\geq 0}$, assigning nonnegative integer values to the blocks $\mathsf{B}_1,...,\mathsf{B}_r, \mathsf{C}_1,...,\mathsf{C}_p$ of $\F$.
We may also write $(\F,v,A)$ as $(\F,v,A)=(\mathsf{B}_1^{v(\mathsf{B}_1)}\cdots \mathsf{B}_r^{v(\mathsf{B}_r)}\mathsf{C}_1^{v(\mathsf{C}_1)}\cdots \mathsf{C}_p^{v(\mathsf{C}_p)},A)$.
Additionally, we denote by $B_A(\F)$ the set of blocks consisting only of elements in $A$, i.e., $B_A(\F) = \{\mathsf{C}_1,\dots,\mathsf{C}_p\}$.
Note that $B_\emptyset(\F)=\emptyset$.

\end{definition}

We can think of the values on the blocks as adding a different kind of weight to the blocks, one that does not rely on the internal ordering of the block. While it may not be immediately clear why this notion of value is relevant since it does not appear in the definition of the elements in $\CF(q,s,k,\ell,m)$, we will see that this is a necessary auxiliary component that is needed to form a connection between the left-hand and right-hand side of Conjecture \ref{conj:main}. Hence, it can be thought of as a kind of ``auxiliary weight''. Proposition \ref{prop:i-interpretation} below shows the connection between valued $A$-distinguished ordered chain forests and the right-hand side, while the contents of Section \ref{sec:phi} forms the connection with the left-hand side.

\begin{example}   
Observe that $([1,4]^2[3][5,6]^1[2],\{2\})$ is a valued $\{2\}$-distinguished ordered chain forest, and that $([1,4]^2[3][5,6]^1[2],\{2,5,6\})$ is a valued $\{2,5,6\}$-distinguished ordered chain forest.
\end{example}

\begin{definition}
For $q\geq 0$ and $s\geq 1$, let $DCF(q,s)$ be the set of {valued} $A$-distinguished ordered chain forests $(\F,v,A)$ of $[s]$ for some $A\subset [s]$, such that
\[
|A| + \sum_{\mathsf{B}\in B(\F)} v(\mathsf{B}) = q.
\]
We further denote by $DCF(q,s,k,\ell,m)$ the set of {valued $A$-distinguished ordered chain forests} $(\F,v,A)\in DCF(q,s)$ with the additional property that $\F$ has $k$ blocks and $\gamma(\F,\ell) = m$, where $1\leq m\leq k\leq s$, $0\leq \ell\leq s-1$.
\end{definition}

We introduce the sets $DCF(q,s)$ and $DCF(q,s,k,\ell,m)$ as a means to the end of enumerating $\CF(q,s,k,\ell,m)$. Again these are auxiliary sets that are necessary for our proofs.

The following proposition provides a combinatorial interpretation for each summand in Conjecture \ref{conj:main}. 
The purpose of introducing {(valued)} $A$-distinguished ordered chain forests is to properly articulate Proposition \ref{prop:i-interpretation}. 
This proposition allows us to view the expression from Conjecture \ref{conj:main} through a combinatorial lens that will support our proof. Indeed, the left-hand side of equation \ref{sum interpretation} appears as a summand in Conjecture \ref{conj:main}.

\begin{proposition}\label{prop:i-interpretation}
For an integer $0\leq i\leq q$ and fixed
$1\leq m\leq k \leq s$, $0\leq \ell\leq s-1$, the $i^{\text{th}}$ term of the sum in Conjecture \ref{conj:main} can be expressed in terms of valued $A$-distinguished ordered chain forests as follows:

\begin{equation}\label{sum interpretation}
(-1)^i\binom{s}{i} \Pi^{s-\ell-m}_{-i+1,s-\ell-1-i} \Pi^{\ell-(k-m)}_{s-\ell-i,s-1-i} \binom{k-1+q-i}{k-1} = \sum_{A\in \binom{[s]}{i}}\sum_{(\F,v,A)\in DCF(q,s,k,\ell,m)}(-1)^{|B_A(\F)|}.
\end{equation}
\end{proposition}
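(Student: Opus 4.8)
The plan is to prove the identity \eqref{sum interpretation} by showing that each factor on the left-hand side counts a component of the data defining a valued $A$-distinguished ordered chain forest in $DCF(q,s,k,\ell,m)$, with the sign $(-1)^{|B_A(\F)|}$ absorbed appropriately. I would first fix a subset $A \in \binom{[s]}{i}$; there are $\binom{s}{i}$ such choices, which accounts for the binomial coefficient $\binom{s}{i}$ on the left, and I will aim to show that for each fixed $A$ the inner sum $\sum_{(\F,v,A)\in DCF(q,s,k,\ell,m)}(-1)^{|B_A(\F)|}$ equals $(-1)^i \Pi^{s-\ell-m}_{-i+1,s-\ell-1-i}\,\Pi^{\ell-(k-m)}_{s-\ell-i,s-1-i}\,\binom{k-1+q-i}{k-1}$. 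The sign $(-1)^i$ will come from pairing it with $(-1)^{|B_A(\F)|}$: since $A$ has $i$ elements and the $\mathsf{C}$-blocks partition $A$ into $p = |B_A(\F)|$ nonempty blocks, I expect to recognize $\Pi^{\ell-(k-m)}_{s-\ell-i,s-1-i}$ (or a closely related $\Pi$ with a sign twist) as a signed count of set partitions of $A$ into blocks — i.e. as a signless-Stirling-like quantity where the product of integers in the range $[s-\ell-i, s-1-i]$ records the "insertion choices" when building the $\mathsf{C}_1\cdots\mathsf{C}_p$ portion one element at a time, and the parity $(-1)^{i-p}$ supplies the remaining sign so that $(-1)^i(-1)^{\text{something}} = (-1)^{|B_A(\F)|}$ after summation.

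The key steps, in order, would be: (1) Set up the decomposition of a valued $A$-distinguished forest into four independent pieces of data: the ordered sequence of $\mathsf{B}$-blocks (built from $[s]\setminus A$, weight $0$, increasing leaders), the ordered sequence of $\mathsf{C}$-blocks (built from $A$, weight $0$, decreasing leaders), the values $v$ on all $k = r+p$ blocks, and the constraint $\gamma(\F,\ell) = m$. (2) Handle the $\mathsf{B}$-blocks: these are built on the $s-i$ elements of $[s]\setminus A$ with weight $0$, and the condition $\gamma(\F,\ell)=m$ forces a split of how many blocks / elements sit before versus after position $\ell$; I would argue that the number of ways to build the "first part" contributing the elements in the first $\ell$ positions, arranged into blocks with the prescribed count, is exactly $\Pi^{s-\ell-m}_{-i+1,s-\ell-1-i}$ (a falling-factorial-type product coming from sequentially inserting elements either as a new block leader or appended to an existing chain), invoking the same mechanism by which Theorem \ref{lah theorem}'s $\Pi^{s-k}_{-i+1,s-1-i}$ arises in the proof of \eqref{eq:Lah}. (3) Handle the $\mathsf{C}$-blocks together with their interaction with the remaining positions past $\ell$: the decreasing-leader convention makes the $\mathsf{C}$-block structure rigid enough that inserting the $i$ elements of $A$ one at a time (in increasing order, so each new element is either a new rightmost block or appended to a chain) gives a product of integers over the range controlling $\Pi^{\ell-(k-m)}_{s-\ell-i,s-1-i}$, with the parity bookkeeping producing $(-1)^{|B_A(\F)|}$ relative to $(-1)^i$. (4) Handle the values: distributing total value $q-i$ among the $k$ blocks with each value a nonnegative integer is the classic stars-and-bars count $\binom{k-1+(q-i)}{k-1}$, which is exactly the last factor. (5) Multiply the four independent counts and collect signs.

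The main obstacle I anticipate is step (3) — pinning down exactly why the decreasing-leader constraint on the $\mathsf{C}$-blocks, combined with how these blocks interleave with the $\mathsf{B}$-blocks under the $\gamma(\F,\ell)=m$ condition, produces precisely the range $[s-\ell-i,\,s-1-i]$ and the correct number $\ell-(k-m)$ of factors, and making the sign accounting airtight so that the per-$A$ inner signed sum collapses to a single $\Pi$ term times $(-1)^i$. In particular I expect the subtle point to be that $\gamma(\F,\ell)=m$ does not simply fix the number of $\mathsf{B}$-blocks versus $\mathsf{C}$-blocks separately but rather constrains the total number of \emph{trailers past position $\ell$}, so one must carefully track which blocks straddle the cutoff at position $\ell$; this is where I would most likely need a lemma isolating the "prefix of length $\ell$" of the forest and counting extensions. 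I would also double-check the edge conventions ($\Pi^0 = 1$, $\Pi^n = 0$ for $n<0$, and the $\Pi_{a+1,a}^n = 1$ convention) against the boundary cases $m=k$, $m=1$, $\ell = 0$, and $\ell = s-1$, since the stated identity must degenerate correctly to Theorem \ref{lah theorem} when one sums \eqref{sum interpretation} over all valid $m$.
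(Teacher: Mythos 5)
There is a genuine gap, and it sits exactly where you flag it: step (3), and more fundamentally the decomposition proposed in steps (2)--(3). You assign the first factor $\Pi^{s-\ell-m}_{-i+1,s-\ell-1-i}$ to the $\mathsf{B}$-blocks (``the elements in the first $\ell$ positions'') and the second factor $\Pi^{\ell-(k-m)}_{s-\ell-i,s-1-i}$ to the insertion of the $i$ elements of $A$. That assignment cannot be made to work: the second $\Pi$ has $\ell-(k-m)$ factors drawn from a range of $\ell$ consecutive integers, numbers unrelated to $i$, so it cannot record the insertion of the $i$ elements of $A$; and the superscript $s-\ell-m$ of the first $\Pi$ does not count anything attached to the first $\ell$ positions. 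The decomposition that actually proves the identity is positional, not by ground set: index positions $2,\dots,s$ by the integers $e_u=s-i-u$ in decreasing order; then the second $\Pi$ governs positions $2,\dots,\ell+1$, the first $\Pi$ governs positions $\ell+2,\dots,s$, the integers \emph{not} chosen in a given product (there are $(m-1)+(k-m)=k-1$ of them, plus position $1$) mark where blocks start, and a chosen integer $e_u$ contributes $|e_u|$ because that is exactly the number of still unallocated elements when position $u+1$ is filled (elements of $[s]\setminus A$ fill the first $s-i$ positions left to right, forced to the smallest available element at each block start; elements of $A$ fill the last $i$ positions, with the $A$-blocks built right to left). This positional split is precisely how $\gamma(\F,\ell)=m$ is encoded: $k-m$ missing factors in the second range give $k-m+1$ block starts among the first $\ell+1$ positions, hence $m$ trailers after position $\ell$. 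The sign $(-1)^{i-|B_A(\F)|}$ comes from the negative values $e_u$ at the last $i$ positions (each non-leader element of $A$ contributes a $-1$), i.e.\ from the negative tail of the \emph{first} $\Pi$'s range, not from the second $\Pi$.

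A further consequence is that your ``four independent pieces'' factorization is not available: the factor contributed at each non-block-start position is the number of globally remaining elements, so the $\mathsf{B}$-part and the $\mathsf{C}$-part cannot be counted separately, each by its own $\Pi$; the product of the two $\Pi$'s factors by position, with $\mathsf{B}$- and $\mathsf{C}$-positions interleaved in a single bracket-marked sequence. What you do have right --- $\binom{s}{i}$ for the choice of $A$, stars and bars giving $\binom{k-1+q-i}{k-1}$ for the values summing to $q-i$, and the parity bookkeeping $(-1)^i(-1)^{i-|B_A(\F)|}=(-1)^{|B_A(\F)|}$ --- agrees with the paper's proof, but without the position/bracket mechanism the per-$A$ signed identity that your plan requires in steps (2)--(3) is exactly the part left unproved.
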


\begin{proof}
Let us start by looking at the left hand side of \eqref{sum interpretation}, which equals

\begin{align}\label{insignificant equation}
\begin{split}
&(-1)^i\binom{s}{i} \Pi^{s-\ell-m}_{-i+1,s-\ell-1-i} \Pi^{\ell-(k-m)}_{s-\ell-i,s-1-i} \binom{k-1+q-i}{k-1}\\
=& \sum_{A \in \binom{[s]}{i}}(-1)^i\Pi^{s-\ell-m}_{-i+1,s-\ell-1-i} \Pi^{\ell-(k-m)}_{s-\ell-i,s-1-i} \binom{k-1+q-i}{k-1}.
\end{split}
\end{align}
Note that $\Pi^{s-\ell-m}_{-i+1,s-\ell-1-i} \Pi^{\ell-(k-m)}_{s-\ell-i,s-1-i}$ is the sum of all {possible} products of the form \[i_1\cdots i_{s-\ell-m}\cdot j_1\cdots j_{\ell-(k-m)},\] where \[-i+1\leq i_1 < \cdots < i_{s-\ell -m} \leq s-\ell -1 -i\] and \[s-\ell-i\leq j_1 < \cdots < j_{\ell -(k-m)} \leq s -1 -i.\] 

The following claim shows how the product $i_1\cdots i_{s-\ell-m}\cdot j_1\cdots j_{\ell-(k-m)}$ corresponds to $|i_1|\cdots |i_{s-\ell-m}|\cdot |j_1|\cdots |j_{\ell-(k-m)}|$ distinct  $A$-distinguished ordered chain forests when the $i$-element subset $A$ is fixed. For an $A$-distinguished ordered chain forest $(\F,A)$, let $B_A$ be the number of blocks consisting only of elements in $A$.
\begin{claim}
Let $i_1\cdots i_{s-\ell-m}\cdot j_1\cdots j_{\ell-(k-m)}$ be a product as above and fix an $i$-element subset $A\in \binom{[s]}{i}$. Let $1\leq u_1<\cdots<u_{k-1}\leq s-1$  be the integers such that $s-i-u_r$ does not appear in the product for all $1\leq r\leq k-1$. There are exactly $|i_1|\cdots |i_{s-\ell-m}|\cdot |j_1|\cdots |j_{\ell-(k-m)}|$
distinct $A$-distinguished ordered chain forests whose blocks start at positions $1,u_1+1,u_2+1,\dots,u_{k-1}+1$.

Furthermore, the sum of values $(-1)^{i-B_A}$ over all the aforementioned $A$-distinguished ordered chain forests above is equal to $i_1\cdots i_{s-\ell-m}\cdot j_1\cdots j_{\ell-(k-m)}$.
\end{claim}
\begin{proof}
Denote by $S= \{ i_1,\dots, i_{s-\ell-m}, j_1,\dots, j_{\ell-(k-m)}\}$ the set of elements in the product.
For $1\leq u\leq s-1$, let $e_u = s-i-u$ be the $u^{th}$ largest element among $-i+1,\dots, s-1-i$.
We define a sequence $(a_0,a_1,\dots,a_{s-1})$ consisting of numbers and left brackets ``$[\phantom{s}$'' as follows: 

\begin{itemize}

\item $a_0 = [\phantom{s}$
    
\item $a_u = e_u$ if $e_u\in S$

\item $a_u=[\phantom{s}$ if $e_u\notin S$.

\end{itemize}

We now show how the sequence $(a_0,a_1,\dots,a_{s-1})$ can be used to construct a set of 
\[|i_1|\cdots |i_{s-\ell-m}|\cdot |j_1|\cdots |j_{\ell-(k-m)}| \]
distinct $A$-distinguished ordered chain forests whose starting block positions are $1,u_1+1,u_2+1,\dots,u_{k-1}+1$. 

If we have $a_u = [\phantom{s}$ for some $u$, this corresponds to starting a block at the $(u+1)^{th}$ position.
In particular, this is why $a_0=[\phantom{s}${; to ensure that a block always starts at the first position}.
Starting at $u=0$, we construct a family of $A$-distinguished chain forests as follows.

\begin{itemize}

\item If $u\leq s-1-i$ and $a_u=[\phantom{s}$, then we start a new block at the $(u+1)^{th}$ position and we fill this position with the smallest element of $[s]\setminus A$ that has not yet been allocated to a block. 
In particular, since $a_0=[ \phantom{s}$, we fill the first position with the smallest element of $[s]\setminus A$. 
Note that when we start a new block at position $(u+1)$, we end the previous block at position $u$.
    
\item If $u\leq s-1-i$ and $a_u\neq [\phantom{s}$, we can place any element from $[s]\setminus A$ that has not been already allocated to a block at the $(u+1)^{th}$ position, for which there are $a_u$ choices.

\end{itemize}

This process constructs the blocks containing elements of $[s]\setminus A$. 
Let us now construct the blocks containing elements of $A$ from right to left.
We consider the largest index $u_1$ with $u_1\geq s-i$ such that $a_{u_1}=[\phantom{s}$, and start a new block at the $(u_1+1)^{th}$ position.
We fill this position with the smallest element of $A$.
Then we may fill in the rest of this block in $|a_{u_1+1}|\cdots |a_{s-1}|$ ways with elements of $A$ that were not yet chosen. 
Each such way to fill in the block contributes $(-1)^{s-1-u_1}$ to the sign of the product {$i_1\cdots i_{s-\ell-m}\cdot j_1\cdots j_{\ell-(k-m)}$} since $a_u< 0$ for $u\geq s-i+1$. 
Then we consider the second-largest $u_2< u_1$ with $u_2\geq s-i$ such that $a_{u_2}=[\phantom{s}$.
We start a new block at the $(u_2+1)^{th}$ position and we fill this position with the smallest element of $A$ not yet chosen. Then we may fill in the rest of this block in $|a_{u_2+1}\cdots a_{u_1-1}|$ ways with elements of $A$ that were not yet chosen. 
Similarly, each such choice contributes $(-1)^{u_1-u_2-1}$ to the product.
We continue in this way until we have allocated all elements of $A$.

If the product $i_1\cdots i_{s-\ell-m}\cdot j_1\cdots j_{\ell-(k-m)}$ is not zero, then $0\notin S$, so we always start a new block at the $(s-i+1)^{th}$ position and it is filled with an element from $A$. 
Therefore, the last $i$ positions are filled with elements of $A$ and each block in the ordered chain forests constructed consists either entirely of elements in $A$ or entirely of elements in $[s]\setminus A$. 
The remaining conditions in Definition \ref{A-dist forests} are also satisfied, i.e., the ordered chain forests constructed are $A$-distinguished.

Additionally, as discussed a ``$-1$" is contributed to the sign of the product $i_1\cdots i_{s-\ell-m}\cdot j_1\cdots j_{\ell-(k-m)}$ every time we choose an element in $A$ that is not the leader of a block, and in any other case that we choose some element, it does not affect the sign of the product. 
Therefore, if $B_A$ is the number of blocks that consist only of elements in $A$, then the sign of $i_1\cdots i_{s-\ell-m}\cdot j_1\cdots j_{\ell-(k-m)}$ is $(-1)^{i-B_A}$ since ${|A| = i }$.
\end{proof}

The $A$-distinguished ordered chain forests $(\F,A)$ constructed in this way all have $k$ blocks since there are $k-1$ elements between $-i-1$ and $s-1-i$ not contained in the product.
To see that $\gamma(\F,\ell) = m$, notice that in the product  $i_1\cdots i_{s-\ell-m}\cdot j_1\cdots j_{\ell-(k-m)}$ we choose $\ell - (k-m)$ elements out of the $\ell$ elements between $s-\ell - i$ and $s-1-i$. 
Hence, among the first $\ell + 1$ positions, there are exactly $k-m+1$ positions that are the beginning of some block (including the first position).
Therefore, there are exactly $k-m$ blocks that end on or before the $\ell^{th}$ position, which implies that there are exactly $m$ blocks that end after the $\ell^{th}$ position.

It then follows that for a fixed set $A\in \binom{[s]}{i}$, 
\[
(-1)^i\Pi^{s-\ell-m}_{-i+1,s-\ell-1-i} \Pi^{\ell-(k-m)}_{s-\ell-i,s-1-i}
\] 
is given by $\sum (-1)^{|B_A(\F)|}$ where the sum is taken over all $A$-distinguished ordered chain forests $(\F,A)$ of $[s]$ where $\F$ has $k$ blocks and $\gamma(\F,\ell) = m$.

Finally, the quantity $\binom{k-1+q-i}{k-1}$ can be interpreted as the number of ways to assign nonnegative values to the $k$ blocks of such $A$-distinguished ordered chain forests $(\F,A)$ whose sum of the values on the blocks is $q-i$. 

Since $q-i +|A| = q$, we have that 
    \[
    (-1)^i\Pi^{s-\ell-m}_{-i+1,s-\ell-1-i} \Pi^{\ell-(k-m)}_{s-\ell-i,s-1-i}\binom{k-1+q-i}{k-1} = \sum_{(\F,v,A)\in DCF(q,s,k,\ell,m)}(-1)^{|B_A(\F)|}.
    \] 
Summing over all subsets $A \in \binom{[s]}{i}$ completes the proof, by Equation \eqref{insignificant equation}.
\end{proof}

The following example demonstrates the process of creating $A$-distinguished ordered chain forests starting from a product, as discussed in Proposition \ref{prop:i-interpretation}.

\begin{example}
Let $s=9$, $k=4$, $m=2$, $i=4$, $\ell= 5$ and take $A=\{1,2,3,4\}$.
Let $(-3)\cdot (-1)\cdot 1\cdot 3\cdot 4$ be a product contributing to the sum defining $\Pi^2_{-3,-1}\Pi^3_{0,4}$.
The associated sequence as in the proof of Proposition \ref{prop:i-interpretation} is
 \[
    \text{{$(a_0,...,a_8)=$}}([\ ,\,4,\,3,\,[\ ,\,1,\,[\ ,\,-1,\,[\ ,\,-3).
\]
Thus, the blocks of any $A$-distinguished ordered chain forest corresponding to this product have the following form:
    \[
    [\_,\_,\_][\_,\_][\_,\_][\_,\_].
    \]
The first position will be filled with $5$, the smallest element of $[9]\setminus A$, then there are $4\cdot 3$ ways to fill in the rest of this block with elements from $[9]\setminus (A\cup \{5\})=\{6,7,8,9\}$.
The smallest element of $[9]\setminus A$ not yet used will fill in the fourth position, and then there is 1 way to fill in the rest of the second block with that last element of $[9]\setminus A$ not yet chosen.

The eighth position is then filled with $1$, the smallest element of $A$, and there are 3 ways to fill in the rest of that block with an element of $A\setminus\{1\}$: specifically, the last block can be chosen among $[1,2], [1,3], [1,4]$. 
Then we fill in the sixth position with the smallest element of $A$ not yet used. 
Finally, there is only 1 way to fill the seventh position with the last element of $A$.

Each $A$-distinguished ordered chain forest obtained in this way contributes $(-1)^2=1$ to the sum $\Pi^2_{-3,-1}\Pi^3_{0,4}$ since there are two blocks consisting only of elements in $A$.
\end{example}


\section{A bijection between ordered chain forests}\label{sec:phi}

Recall that the blocks of a valued $A$-distinguished ordered chain forest $(\F,v,A)$ have weight 0 by definition {(see Definition \ref{A-dist forests})}. 
In what follows, we define an algorithm which takes as input such chain forests $(\F,v,A)$ and outputs a valued ordered chain forest where some blocks may have non-zero weight. 
We define a map $\phi$ via this algorithm, that will play a crucial role throughout the proofs.
We will see each element in $\CF(q,s,k,\ell,m)$ is in the image of some valued $\emptyset$-distinguished ordered chain forest under $\phi$. 
Furthermore, $\phi$ will allow us to relate the combinatorial interpretation from Proposition \ref{prop:i-interpretation} to the statement of Conjecture \ref{conj:main}.\\

\subsection{An algorithm defined on valued \texorpdfstring{$A$}{}-distinguished ordered chain forests.}

Let $(\F,v,A) = (\mathsf{B}_1^{\beta_1}\cdots \mathsf{B}_r^{\beta_r} \mathsf{C}_1^{\gamma_1}\cdots \mathsf{C}_p^{\gamma_p},A)$ be a valued $A$-distinguished ordered chain forest of $[s]$ where the blocks $\mathsf{B}_i$ are the blocks with elements in $[s]\setminus A$ and the blocks $\mathsf{C}_i$ are the blocks with elements in $A$ {$(0\leq r,p\leq s)$}. 
We first split $(\F,v,A)$ into its \defterm{non-distinguished part}, $(\F_1,v_1) = \mathsf{B}_1^{\beta_1}\cdots \mathsf{B}_r^{\beta_r}$, and its \defterm{distinguished part}, $(\F_2,v_2)=\mathsf{C}_1^{\gamma_1}\cdots \mathsf{C}_p^{\gamma_p}$. 

We now define the algorithm on the the non-distinguished part $(\F_1,v_1)$. 
We initiate with a set $P=\emptyset$, which we call the set of \textit{processed elements}, and an ordered list $L$ that contains the elements in $[s]\setminus A$ listed increasingly. 
At each iterative step in the algorithm, the set $P$ and the ordered list $L$ will be updated. 
Moreover, at each iteration, $L$
will represent an \textit{ordering} on the elements of $[s]\setminus A$. 
We represent this ordering with the notation $<_L$. 
We say that $a$ is smaller (resp. larger) than $b$ according to $L$ if $a<_Lb$ (resp. $a>_Lb$). 
If we say $a$ is smaller (resp. larger) than $b$ without specifying $L$, then we mean that $a<b$ (resp. $a>b$) in the natural ordering. 

\begin{algorithm}[Processing valued ordered chain forests]\label{alg}\hfill \\
{$\mathtt{Input}$: valued ordered chain forest $(\F_1,v_1)$, $P=\emptyset$, naturally-ordered list $L$ on $[s]\setminus A$.}\\
$\mathtt{Output}$: valued ordered chain forest $(\F'_1,v'_1)$ obtained at termination of the algorithm whose properties are described in Proposition \ref{prop:phi-image}.
\begin{enumerate}

    \item Find the leftmost block $\mathsf{B}\in B(\F_1)$ for which $\mathsf{B}$ contains an element not in $P$ and $v_1(\mathsf{B}) > 0$.
    Let $a$ be the leading term of $\mathsf{B}$, and let $i_1<_L i_2<_L \cdots<_L i_j$ be the elements lying either in $\mathsf{B}$ or in one of the blocks to the right of $\mathsf{B}$ (notice that $i_1=a$). 
    
    \item Update the blocks in $\F_1$ by applying the map $i_x \mapsto i_{x+1}$ (indices are modulo $j$) and fixing the remaining elements.
    
    \item $L=L+[a]$, i.e., append $a$ to the end of $L$.
    \item $P=P\cup \{a\}$.
    We say $a$ has been ``\emph{processed}.''
    
    \item $v_1(\mathsf{B})=v_1(\mathsf{B})-1$.
    
    \item If there exists a block that contains an element not in $P$ whose value is positive, go back to step (1), otherwise terminate.
\end{enumerate}

\end{algorithm}

\noindent Through this algorithm, we define the following map on the set of valued $A$-distinguished ordered chain forests.

\begin{definition}
Let $(\F,v,A) = (\mathsf{B}_1^{\beta_1}\cdots \mathsf{B}_r^{\beta_r}\mathsf{C}_1^{\gamma_1}\cdots \mathsf{C}_p^{\gamma_p}\text{{$,A$)}}$ be an element of $DCF(q,s)$ with  non-distinguished part $(\F_1,v_1)=\mathsf{B}_1^{\beta_1}\cdots \mathsf{B}_r^{\beta_r}$. 
Let $(\F_1',v_1') = \mathsf{B}_1'^{\beta_1'}\cdots \mathsf{B}_r'^{\beta_r'}$ be the {output of Algorithm \ref{alg}}.
We define the \defterm{map $\phi$} by
\[
\phi((\F,v,A)) = (\mathsf{B}_1'^{\beta_1'}\cdots \mathsf{B}_r'^{\beta_r'}\mathsf{C}_1^{\gamma_1}\cdots \mathsf{C}_p^{\gamma_p},A).
\]
\end{definition}

Note that an element in the image of $\phi$ is not necessarily an $A$-distinguished chain forest since the leaders of some blocks may not be the smallest element in the block (see the examples below). However, it is still necessary to view an element in the image as a triple, i.e. distinguishing the elements of $A$ is crucial to our proofs later on. The basic idea of the algorithm is it converts that values on the blocks to weight on the blocks and hence the sum of the values and weights on the blocks is preserved.

\begin{example}
Let $(\F,v,\emptyset)= ([1,6,2]^2[3,7,5]^1[4],\emptyset)$ be a valued $\emptyset$-distinguished ordered chain forest. 
Then, $(\F_1,v_1)\ =\ [1,6,2]^2[3,7,5]^1[4]$ is itself the undistinguished part, and {the valued ordered chain forest, together with the sets $P$ and $L$, after} each iterative step  in Algorithm \ref{alg} is described below:

\begin{itemize}
\item $[1,6,2]^2[3,7,5]^1[4]$, $P=\emptyset$, $L=[1,2,3,4,5,6,7]$

\item $[2,7,3]^1[4,1,6]^1[5]$, $P=\{1\}$, $L=[2,3,4,5,6,7,1]$

\item $[3,1,4][5,2,7]^1[6]$, $P=\{1,2\}$, $L=[3,4,5,6,7,1,2]$

\item $[3,1,4][6,5,2][7]$, $P=\{1,2,5\}$, $L=[3,4,6,7,1,2,5]$

\end{itemize}

Thus, {$(\F'_1,v'_1)=[3,1,4][6,5,2][7]$ and} $\phi((\F,v,\emptyset)) = ([3,1,4][6,5,2][7],\emptyset)$.
\end{example}

\begin{example}\label{ex:68}
Let $(\F,v,\{6,8\}) = ([1,5,3]^2[2]^2[4,7]^1[8][6]^1,\{6,8\})$ be a valued $\{6,8\}$-distinguished ordered chain forest. 
The undistinguished part is given by $(\F_1,v_1) = [1,5,3]^2[2]^2[4,7]^1$.
{The valued ordered chain forest, together with the sets $P$ and $L$, after} each iterative step in Algorithm \ref{alg} is described below:

\begin{itemize}

\item $[1,5,3]^2[2]^2[4,7]^1$, $P=\emptyset$, $L=[1,2,3,4,5,7]$

\item $[2,7,4]^1[3]^2[5,1]^1$, $P=\{1\}$, $L=[2,3,4,5,7,1]$

\item $[3,1,5][4]^2[7,2]^1$, $P=\{1,2\}$, $L=[3,4,5,7,1,2]$

\item $[3,1,5][7]^1[2,4]^1$, $P=\{1,2,4\}$, $L=[3,5,7,1,2,4]$

\item $[3,1,5][2][4,7]^1$, $P=\{1,2,4,7\}$, $L=[3,5,1,2,4,7]$

\end{itemize}

Thus, {$(\F'_1,v'_1)=[3,1,5][2][4,7]^1$ and}  $\phi((\F,v,\{6,8\})) = ([3,1,5][2][4,7]^1[8][6]^1,\{6,8\})$. 
\end{example}

\subsection{Properties of the map \texorpdfstring{$\phi$}{}}

In the following, we note a few facts pertaining to Algorithm \ref{alg} that 
{will be used to understand $\phi$ and prove that it is a bijection (Propositions \ref{prop:phi-bijection} and \ref{prop:phi-image}). 
These will serve as the foundations for proving Conjecture \ref{conj:main}.}

\begin{lemma}\label{lem:BlockOrder}
Let $(\F,v,A)\in DCF(q,s)$, and let $(\F_1,v_1)$ be its non-distinguished part. 
After the $i^{th}$ iteration of {the steps in} Algorithm \ref{alg} {with input} $(\F_1,v_1)$, the blocks of $\F_1$ are ordered increasingly by their leaders according to $L$, and the leader of each block is the smallest element of that block according to $L$.
\end{lemma}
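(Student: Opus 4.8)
The plan is to prove the statement by induction on the iteration count $i$, with the case $i=0$ being the base case and the inductive step analyzing the effect of one pass through steps (1)–(5) of Algorithm~\ref{alg}. Before the first iteration, $L$ is the natural order on $[s]\setminus A$ and, since $(\F,v,A)\in DCF(q,s)$ is $A$-distinguished, every block $\mathsf{B}_i$ has weight $0$; by definition of weight this says the leader of each $\mathsf{B}_i$ is its smallest element, and the blocks $\mathsf{B}_1,\dots,\mathsf{B}_r$ are ordered increasingly by leader. Since $<_L$ agrees with $<$ at this stage, both assertions hold, establishing the base case.

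For the inductive step, assume that after the $i^{\text{th}}$ iteration the blocks of $\F_1$ are increasing by leader according to the current order $L$ and each block's leader is its $<_L$-minimum. Let $\mathsf{B}$ be the leftmost block with an unprocessed element and positive value, with leader $a=i_1$, and let $i_1<_L i_2<_L\cdots<_L i_j$ be the elements in $\mathsf{B}$ or in a block to its right (note by the induction hypothesis every block strictly left of $\mathsf{B}$ has all its elements $<_L a$, since $\mathsf{B}$'s leader $a$ is the $<_L$-minimum of $\mathsf{B}$ and the leaders increase; so the cyclic shift $i_x\mapsto i_{x+1\bmod j}$ only permutes elements that are all $\ge_L a$, with $a$ itself being sent to $i_2$). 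After step (2), the element $a$ has been removed from its block and replaced by $i_2$, while each other block to the right of $\mathsf{B}$ has had its elements cyclically bumped up within the set $\{i_1,\dots,i_j\}$; in step (3) we append $a$ to the end of $L$, so in the new order $<_L$ the element $a$ becomes strictly the largest element of $[s]\setminus A$, and the relative order of all other elements is unchanged. I then need to check the two invariants for the updated configuration: first, that within each block the leader is still the $<_L$-minimum; second, that the blocks are still increasing by leader under the new $<_L$.

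The heart of the argument — and the step I expect to be the main obstacle — is verifying these two invariants after the cyclic shift, because the shift $i_x\mapsto i_{x+1}$ is ``increasing'' in the old order except at the wrap-around $i_j\mapsto i_1=a$, and this single descent is exactly what must be absorbed by the reordering $L=L+[a]$. Concretely: the block $\mathsf{B}$ loses its old leader $a$ and gains $i_2$ as a new element; since $i_2$ was the $<_L$-smallest element among all elements weakly right of $\mathsf{B}$ other than $a$, and the old non-leader elements of $\mathsf{B}$ were all $>_L a$ hence $\ge_L i_2$, the new $<_L$-minimum of $\mathsf{B}$ is $i_2$, which is now its leader — good. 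For a block $\mathsf{B}'$ strictly right of $\mathsf{B}$: each of its elements $i_x$ (all $\ge_L i_2$, so none equal to $a$ after the shift, unless $i_x=i_j$ in which case it becomes $a$) is replaced by $i_{x+1}$; the old leader of $\mathsf{B}'$ was its $<_L$-min, and since $x\mapsto x+1$ is monotone on $\{1,\dots,j-1\}$ and sends $j$ to the new maximum $a$, the new $<_L$-min of $\mathsf{B}'$ is still the image of the old min unless the old min was $i_j$ — but $i_j$ was the $<_L$-max of all elements weakly right of $\mathsf{B}$, so it cannot have been the min of $\mathsf{B}'$ unless $\mathsf{B}'$ is a singleton, a case one checks directly. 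The same bookkeeping handles the ordering-by-leaders invariant: the leader of $\mathsf{B}$ strictly increases from $a$ (old $<_L$-min of the weakly-right elements) to $i_2$ (the next one up), leaders of blocks to the right change monotonically, and the blocks left of $\mathsf{B}$ are untouched and still have all elements $<_L a <_L$ everything new. I would organize this as two short lemmas-within-the-proof (``the shift preserves leader-minimality'' and ``the shift preserves leader-monotonicity''), each a case analysis on whether a block is $\mathsf{B}$, lies to its right, or lies to its left, using repeatedly that $a$ is moved to the top of $L$ so that the only ``descent'' introduced by the cyclic shift is neutralized.
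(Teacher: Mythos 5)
Your overall strategy is the same as the paper's: induct on the iteration count, and observe that because the processed leader $a$ is appended to the end of $L$, the cyclic shift $i_x\mapsto i_{x+1}$ is order-preserving from the old list to the new one on the affected elements, so both invariants (leader is the $<_L$-minimum of its block, leaders increase along the blocks) survive. The paper compresses your block-by-block case analysis into the single observation that the relative $L$-order of the elements lying in $\mathsf{B}$ or to its right is unchanged by the shift, which gives both invariants at once; your version is more laborious (including the unnecessary "old min was $i_j$" exception, which in any case only occurs for a singleton block and causes no harm), but the skeleton is sound.

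There is, however, one auxiliary claim you assert that is false as stated: it is not true that every block strictly to the left of $\mathsf{B}$ has all its elements $<_L a$. Elements processed in earlier iterations sit inside left blocks as weight-contributing entries and are $L$-large. For instance, in the paper's first worked example, after two iterations the configuration is $[3,1,4][5,2,7]^1[6]$ with $L=[3,4,5,6,7,1,2]$; the block selected next is $[5,2,7]$ with $a=5$, while the left block $[3,1,4]$ contains $1>_L 5$. The induction hypothesis only gives that the \emph{leaders} of the left blocks are $<_L a$, not all of their elements. Fortunately that weaker fact is all you need at both places where you invoke the claim: the set of elements moved by the shift consists of the elements weakly to the right of $\mathsf{B}$, and these are all $\geq_L a$ because each is $\geq_L$ its own block's leader and those leaders are $\geq_L a$ (no statement about left blocks is required to see $a=i_1$); and for the cross-boundary monotonicity, the old leaders of the left blocks satisfy $b<_L a<_L i_2$ with $b,i_2\neq a$, so their order relative to the new leader $i_2$ of $\mathsf{B}$ is unchanged when $a$ is moved to the end of $L$. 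With that justification substituted, your argument goes through and is essentially the published proof.
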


\begin{proof}
We proceed by induction.
Let $\mathsf{B}_{1,i}^{\beta_{1,i}}\cdots {\mathsf{B}_{k,i}^{\beta_{k,i}}}$ denote the valued ordered chain forest and $L_i$ be the ordered list {$L$} obtained after the $i^{th}$ iteration of {the steps in} Algorithm \ref{alg} {with input} $(\F_1,v_1)$.
By definition, the statement of the lemma is true for $(\F_1,v_1) = \mathsf{B}_{1,0}^{\beta_{1,0}}\cdots {\mathsf{B}_{k,0}^{\beta_{k,0}}}$.
Assume that the statement is true for  $\mathsf{B}_{1,i}^{\beta_{1,i}}\cdots {\mathsf{B}_{k,i}^{\beta_{k,i}}}$. 
Let $j$ be the smallest index such that $\mathsf{B}_{j,i}$ contains a non-processed element and $\beta_{j,i} > 0$, and let $a$ be the leader of $\mathsf{B}_{j,i}$. 
In the $(i+1)^{th}$ iteration, the blocks to the left of $\mathsf{B}_{j,i}$ remain unchanged, hence $\mathsf{B}_{1,i+1},\dots,\mathsf{B}_{j-1,i+1}$ are ordered increasingly by their leaders and the leader of each of these blocks is the smallest element in that block according to $L_{i+1}$.
Let $u_1<_{L_i}\dots<_{L_i}u_x$ be the elements lying either in $\mathsf{B}_{j,i}$ or in a block to the right of $\mathsf{B}_{j,i}$, and consider the set $S=\{u_1,\dots,u_x\}$. 
In particular, $a=u_1$ since all the elements $e$ lying in $\mathsf{B}_{j,i}$ or in a block to the right of $\mathsf{B}_{j,i}$, are at least as large as their block leader according to $L_i$, and hence at least as large as $a$ according to $L_i$. 
Then the blocks $\mathsf{B}_{j,i+1},\dots,\mathsf{B}_{k,i+1}$ are obtained by applying the map $u_{y}\mapsto u_{y+1}$ (in particular $u_x \mapsto a$), and $L_{i+1}$ is obtained from $L_i$ by setting $a$ to be the last entry. 
Therefore, the relative order of the elements of $S$ as they appear from left to right in the blocks $\mathsf{B}_{j,i},\dots,\mathsf{B}_{k,i}$ according to $L_i$ is the same as the relative order that these elements appear from left to right in the blocks $\mathsf{B}_{j,i+1},\dots,\mathsf{B}_{k,i+1}$ according to $L_{i+1}$.
Thus, the statement of the lemma holds for $\mathsf{B}_{1,i+1}^{\beta_{1,i+1}}\cdots {\mathsf{B}_{k,i+1}^{\beta_{k,{i+1}}}}$, which completes the proof.
\end{proof}

\begin{lemma}\label{lem:processedOrdering}
Let $(\F,v,A)\in DCF(q,s)$, and let $(\F_1,v_1)$ be its non-distinguished part. 
{In the iterations of the steps of} Algorithm \ref{alg} {with input} $(\F_1,v_1)$, the processed elements are processed in increasing order.
\end{lemma}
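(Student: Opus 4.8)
The plan is to run the induction alongside Lemma \ref{lem:BlockOrder}, using the fact that the element processed at each step is precisely the leader $a$ of the chosen block $\mathsf{B}$, and then appended to the end of $L$. Write $a_1, a_2, a_3, \dots$ for the sequence of processed elements, so $a_t$ is processed in iteration $t$; I must show $a_1 < a_2 < a_3 < \cdots$ in the \emph{natural} order. First I would record the key invariant: just before iteration $t$, the processed set is $P_{t-1} = \{a_1, \dots, a_{t-1}\}$ and the ordered list $L_{t-1}$ consists of the not-yet-processed elements of $[s]\sm A$ (in some order), followed by $a_1, a_2, \dots, a_{t-1}$ in that order at the tail. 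This is immediate from steps (3) and (4) of Algorithm \ref{alg}, since each iteration appends exactly the newly processed element to the end of $L$, and no element is ever removed from the list.

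Next I would pin down which element gets processed in iteration $t$. By step (1), $a_t$ is the leader of the leftmost block $\mathsf{B}$ that still contains an unprocessed element and has positive value. By Lemma \ref{lem:BlockOrder}, after iteration $t-1$ the leader of $\mathsf{B}$ is the $<_{L_{t-1}}$-smallest element of $\mathsf{B}$; since $\mathsf{B}$ contains at least one unprocessed element and all processed elements sit at the tail of $L_{t-1}$, the $<_{L_{t-1}}$-smallest element of $\mathsf{B}$ is in fact unprocessed. Hence $a_t \notin P_{t-1}$, i.e. $a_t \neq a_1, \dots, a_{t-1}$. That already shows no element is processed twice, but I still need the natural-order monotonicity, which is the real content.

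For monotonicity I would argue as follows. Consider the tail of $L_{t-1}$, which reads $a_1, \dots, a_{t-1}$; I claim this tail is increasing in the natural order, and moreover that every element of $[s]\sm A$ already processed is $<_{L_{t-1}}$-larger than every unprocessed element — both of which hold because processed elements occupy the tail positions of $L_{t-1}$. Now in iteration $t$, the block $\mathsf{B}$ with leader $a_t$ is chosen; crucially, in iteration $t-1$ the element $a_{t-1}$ was the leader of the block chosen then, appended to the tail of $L$, and step (2) cyclically shifted the elements of $\mathsf{B}_{t-1}$ and the blocks to its right. The element $a_t$, being unprocessed, was already present in $L_{t-2}$ before $a_{t-1}$ was appended, so $a_t <_{L_{t-1}} a_{t-1}$ is \emph{false} in general — instead I should compare in the natural order directly. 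The cleanest route: show by induction that at the start of iteration $t$, the leader of \emph{every} block that still has positive value and an unprocessed element is larger (naturally) than $a_{t-1}$. Indeed when $a_{t-1}$ was processed, it was the leftmost such leader and every block from $\mathsf{B}_{t-1}$ rightward had all its elements $\geq_{L_{t-2}} a_{t-1}$ (Lemma \ref{lem:BlockOrder}), hence (since $a_{t-1}$ was then the $<_{L_{t-2}}$-minimum among unprocessed elements of those blocks, and unprocessed elements are compared the same way by $<_{L}$ and by natural order once we track that the unprocessed prefix of $L$ is exactly the natural-order list) those elements are $\geq a_{t-1}$ naturally; blocks strictly left of $\mathsf{B}_{t-1}$ had value $0$ or only processed elements, and remain so. Since $a_t$ is the leader of one of the still-eligible blocks at the start of iteration $t$, we get $a_t > a_{t-1}$.

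The main obstacle I anticipate is bookkeeping the interplay between the two orders: the algorithm's decisions are all phrased via $<_L$, but the conclusion is about the natural order. The resolution — which I would isolate as a small sub-claim, essentially a strengthening piggy-backed on Lemma \ref{lem:BlockOrder} — is that at every stage the \emph{unprocessed} elements of $[s]\sm A$ form a prefix of $L$ and appear there in their natural order (they are only ever permuted among blocks by cyclic shifts that, by the argument in Lemma \ref{lem:BlockOrder}, preserve their relative $<_L$-order, which was initialized as the natural order and never disturbed among unprocessed elements). Granting that sub-claim, "$<_L$-smallest unprocessed element" coincides with "naturally smallest unprocessed element," and the monotonicity $a_1 < a_2 < \cdots$ follows since each $a_t$ is the naturally smallest unprocessed element lying in an eligible block, and once $a_{t-1}$ is removed from the unprocessed pool the new minimum over any subset can only increase.
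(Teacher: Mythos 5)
Your proof is correct and follows essentially the same route as the paper's: both induct along the iterations, use Lemma \ref{lem:BlockOrder} to see that the element processed next is the ($<_L$-minimal, hence unprocessed) leader of the leftmost eligible block, which must lie weakly to the right of the previously chosen block, and then translate the $<_L$-comparison into the natural order via the fact that the unprocessed elements occupy the prefix of $L$ in natural order --- a fact the paper leaves implicit but you rightly isolate as a sub-claim. One small aside is backwards: $a_t <_{L_{t-1}} a_{t-1}$ is in fact always true (the just-processed $a_{t-1}$ sits at the tail of $L_{t-1}$), not ``false in general,'' but this remark plays no role in your actual argument, which correctly compares in the natural order.
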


\begin{proof}
We are interested in the case that at least two elements get processed.
Let $p_i$ be the element processed in the $i^{th}$ iteration ($i\geq 1$) and assume that Algorithm \ref{alg} requires at least one more iteration {of steps (1)-(6)} to terminate. 
Let $\mathsf{B}_{1,i}^{\beta_{1,i}}\cdots {\mathsf{B}_{k,i}^{\beta_{k,i}}}$ be the valued ordered chain forest obtained after the $i^{th}$ iteration.
Let $j$ be the smallest index such that $\mathsf{B}_{j,i}$ contains a non-processed element and $\beta_{j,i} > 0$, and let $a$ be the leader of $\mathsf{B}_{j,i}$. 
Note that $a$ is not a processed element since it is the smallest element in its block according to $L_i$ by Lemma \ref{lem:BlockOrder}. 
Then $p_i$ is the leader of some block of the form $\mathsf{B}_{j',i-1}$ where $j'\leq j$ since if it was the leader of some block to the right of $\mathsf{B}_{j,i-1}$, then it would not have been the left-most block satisfying the conditions in step (1) {after the $(i-1)^{th}$ iteration of the steps in} Algorithm \ref{alg} {(note that if $i-1=0$, it means that no element has been processed yet)}.
By Lemma \ref{lem:BlockOrder}, the leader of $\mathsf{B}_{j,i-1}$ is larger or equal to $p_i$ according to $L_{i-1}$ (they are equal if $j'=j$).
Since step (2) of Algorithm \ref{alg} ensures the leader of $\mathsf{B}_{j,i}$, $a$, is larger than the leader of $\mathsf{B}_{j,i-1}$ according to $L_i$, the fact that $a$ was not processed in the $i^{th}$ iteration implies that $p_i < a$. 
This completes the proof.
\end{proof}

\begin{lemma}\label{lem:ElementsProcessed}
Let $(\F,v,A)\in DCF(q,s)$, and let $(\F_1,v_1)$ be its non-distinguished part. 
After any iterative step in Algorithm \ref{alg} {with input} $(\F_1,v_1)$, each element that contributes weight to some block is a processed element. 
Conversely, each processed element either contributes weight to some block or is contained in a block with only processed elements.
\end{lemma}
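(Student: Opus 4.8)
The plan is to prove both halves of the statement simultaneously by induction on the number of iterations of Algorithm \ref{alg}, tracking how a single application of steps (1)--(6) affects the weights of blocks relative to the processed set $P$. At the start ($P=\emptyset$) each block of $(\F_1,v_1)$ has weight $0$ by the definition of a valued $A$-distinguished ordered chain forest, so no element contributes weight to any block and the statement holds vacuously; also no block consists only of processed elements, so the converse direction is vacuous as well. For the inductive step, suppose the claim holds after the $i^{\text{th}}$ iteration. Let $\mathsf{B}=\mathsf{B}_{j,i}$ be the leftmost block chosen in step (1) of the $(i+1)^{\text{th}}$ iteration, let $a$ be its leader, and let $i_1=a<_L i_2<_L\cdots<_L i_x$ be the elements in $\mathsf{B}$ or to its right, as in the algorithm. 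The cyclic relabeling $i_y\mapsto i_{y+1}$ of step (2) is the only structural change; blocks strictly to the left of $\mathsf{B}$ are untouched and $a$ is appended to $L$ and added to $P$.

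First I would handle the forward direction: if an element $e$ contributes weight to a block after the $(i+1)^{\text{th}}$ iteration, I must show $e\in P$. Blocks to the left of $\mathsf{B}$ are unchanged and $L$ only changed by appending $a$ (which is the new largest element of $L$ and does not reorder anything among the old elements), so by the inductive hypothesis any weight-contributing element in those blocks was already processed. For blocks in the range of $\mathsf{B}$ and to its right: by Lemma \ref{lem:BlockOrder}, before the iteration each of $\mathsf{B}_{j,i},\dots,\mathsf{B}_{k,i}$ had its leader as its $<_L$-smallest element, so no element in this range contributed weight \emph{relative to $<_L$} prior to step (2). After the relabeling $i_y\mapsto i_{y+1}$, the only element that can now be smaller (in the true order) than its block's leader is one that received the label $a=i_1$, i.e.\ the element previously labeled $i_x$, which was the $<_L$-largest among $i_1,\dots,i_x$; but $a\in P$ now. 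Here I would need to be careful to distinguish "contributes weight" (which is defined via the natural order $<$, not $<_L$) from the $<_L$-order used in the algorithm, and argue that for the elements in the range of $\mathsf{B}$, being processed is exactly what accounts for the discrepancy between $<$ and $<_L$ — this is where Lemma \ref{lem:processedOrdering} (processed elements are processed in increasing natural order) enters, ensuring $P$ is always a down-set in the natural order among the elements handled so far.

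For the converse — each processed element either contributes weight to some block or lies in a block consisting entirely of processed elements — I would again use the inductive hypothesis together with the observation that the newly processed element $a$, after the relabeling, occupies the slot formerly held by $i_x$. Either that slot is in a block all of whose other elements are already in $P$ (so the block is now entirely processed), or the block contains a non-processed element whose label is one of $i_2,\dots,i_{x-1}$; since $a$ is the $<_L$-smallest of $i_1,\dots,i_x$ but is being placed where $i_x$ (the $<_L$-largest, hence by Lemma \ref{lem:processedOrdering} also natural-order-largest among non-processed elements in that set... ) was, $a$ is now smaller in the natural order than the leader of that block, so $a$ contributes weight. The same placement argument re-confirms that previously processed elements retain their status: an element of $P$ that contributed weight before either still does, or its block has since become all-processed. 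The main obstacle I anticipate is precisely this bookkeeping between the two orders $<$ and $<_L$ on $[s]\setminus A$: one must show that among the elements in $\mathsf{B}$ and to its right, the $<_L$-order and the $<$-order restricted to non-processed elements agree, so that "the $<_L$-largest of $i_1,\dots,i_x$" can be identified with a genuinely weight-contributing element once relabeled — this is really a consequence of Lemmas \ref{lem:BlockOrder} and \ref{lem:processedOrdering}, but assembling it cleanly is the delicate point.
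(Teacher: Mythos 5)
Your overall strategy (induction over the iterations of Algorithm \ref{alg}, driven by Lemmas \ref{lem:BlockOrder} and \ref{lem:processedOrdering} and the structure of the list $L$) is the same as the paper's, but two of your intermediate assertions are false as stated, and the repair is exactly the cleaner fact you only gesture at. First, it is not true that after step (2) ``the only element that can now be smaller (in the true order) than its block's leader is the one that received the label $a$'': step (2) moves \emph{every} element of $S=\{i_1,\dots,i_x\}$, and previously processed elements (which may sit anywhere in the blocks from $\mathsf{B}_j$ rightwards) will in general lie below their new leaders as well. In Example \ref{ex:68}, after the second iteration the forest is $[3,1,5][4]^2[7,2]^1$ with $P=\{1,2\}$, and the processed element $1$ contributes weight to the first block even though it does not occupy the old $i_x$-slot (which holds $a=2$). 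Second, $P$ is not a down-set of $[s]\setminus A$ in the natural order (later in the same example $P=\{1,2,4\}$ while $3$ is unprocessed), and Lemma \ref{lem:processedOrdering} does not assert this. What the forward direction actually needs is only the structure of $L$: unprocessed elements appear in $L$ in their natural relative order, followed by all processed elements. With that, the forward direction requires no induction and no case analysis at all --- this is how the paper argues: if $e$ is naturally smaller than the leader $b$ of its block, then $b<_L e$ by Lemma \ref{lem:BlockOrder}, which is impossible if $e$ is unprocessed (if $b$ is unprocessed, $<_L$ and $<$ agree on the pair; if $b$ is processed, then $b>_L e$), so $e$ is processed.

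For the converse, your placement argument for the newly processed element $a$ is essentially correct, but ``the same placement argument re-confirms that previously processed elements retain their status'' hides the real point, since those elements are also relocated into new blocks by step (2). The uniform argument (and the paper's) is: after the iteration, any block among $\mathsf{B}_{j},\dots,\mathsf{B}_{k}$ that is not all-processed has an unprocessed leader (its leader is $<_L$-minimal, and unprocessed elements precede processed ones in $L$); since $a$ was unprocessed and $<_L$-minimal in $S$ before the iteration, every unprocessed element of $S\setminus\{a\}$ exceeds $a$ in the natural order, so that leader is naturally larger than $a$; combined with $p\le a$ for every processed $p$ (Lemma \ref{lem:processedOrdering}), every processed element of such a block --- whether newly or previously processed --- contributes weight to it. Finally, the parenthetical identifying $i_x$ as natural-order-largest among unprocessed elements is wrong when $S$ contains processed elements (then $i_x$ is the largest \emph{processed} element of $S$), but nothing in the corrected argument needs that identification.
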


\begin{proof}
We proceed by induction.
Let $\mathsf{B}_{1,i}^{\beta_{1,i}}\cdots {\mathsf{B}_{k,i}^{\beta_{k,i}}}$ be the valued ordered chain forest obtained after the $i^{th}$ iteration.
Assume that the statement is true for  $\mathsf{B}_{1,i}^{\beta_{1,i}}\cdots {\mathsf{B}_{k,i}^{\beta_{k,i}}}$. 
Here, $i=0$ corresponds to no element having been processed yet, and we restrict to the case that at least one element gets processed.
Let $j$ be the smallest index such that $\mathsf{B}_{j,i}$ contains a non-processed element and $\beta_{j,i} > 0$, and let $a$ be the leader of $\mathsf{B}_{j,i}$. 
Note that $a$ is not a processed element since it is the smallest element in its block according to $L_i$ by Lemma \ref{lem:BlockOrder}.
In the $(i+1)^{th}$ iteration, the blocks to the left of $\mathsf{B}_{j,i}$ remain unchanged, so the elements that contribute weight to one of $\mathsf{B}_{1,i+1},\dots,\mathsf{B}_{j-1,i+1}$ are processed elements (if any). 
Let $u_1<_{L_i}\dots<_{L_i}u_x$ be the elements lying either in $\mathsf{B}_{j,i}$ or in a block to the right of $\mathsf{B}_{j,i}$ and let $S=\{u_1,\dots,u_x\}$; in particular, $a=u_1$, as discussed in the proof of  Lemma \ref{lem:BlockOrder}.
Furthermore, by Lemma \ref{lem:BlockOrder}, each leader of the blocks  $\mathsf{B}_{j,i},\dots, \mathsf{B}_{k,i}$ that is not a processed element is larger than (or equal to) $a$.
Thus, if a processed element $p\in S$ in one of the blocks $\mathsf{B}_{j,i+1},\dots,\mathsf{B}_{k,i+1}$ that is not a block consisting only of processed elements, then $p$ contributes weight to this block since its leader is larger than $a$ and $p\leq a$ by Lemma \ref{lem:processedOrdering}.

Additionally, if an element contributes weight to a block {of an ordered chain forest formed after some iterative step in Algorithm \ref{alg} with corresponding ordering $L$}, then by Lemma \ref{lem:BlockOrder} this element must be a processed element; otherwise the leader of this block would not be the smallest element in the block according to $L$.

This completes the proof.
\end{proof}

If $(\F,v,A)$ is a valued $A$-distinguished ordered chain forest, with non-distinguished part $(\F_1,v_1)$ where $\sum_{\mathsf{B}\in B(\F_1)} v_1(\mathsf{B}) = q_1$, then Lemma \ref{lem:ElementsProcessed} provides a way to recover the valued ordered chain forest obtained after the $(i-1)^{th}$ iterative step in Algorithm \ref{alg} with input $(\F_1,v_1)$ from the valued ordered chain forest obtained after the $i^{th}$ iteration. 
The following Lemma describes precisely how we may do this. 

\begin{lemma}\label{lem:ReverseIteration}
Let $(\F,v,A)$ be a valued $A$-distinguished ordered chain forest of $[s]$, with non-distinguished part $(\F_1,v_1)$ where $\sum_{\mathsf{B}\in B(\F_1)} v_1(\mathsf{B}) = q_1$.
Let $(\F_{1,i},w_{1,i})= \mathsf{B}_{1,i}^{\beta_{1,i}}\cdots \mathsf{B}_{r,i}^{\beta_{r,i}}$ be {the valued ordered chain forest} obtained after the $i^{th}$ iterative step in Algorithm \ref{alg} {with input} $(\F_1,v_1)$.
Then the valued ordered
chain forest obtained after the $(i-1)^{th}$ iteration can be obtained by the following process:
    
\begin{enumerate}
\item Find the integer $j$ such that $\sum_{u=1}^{r-j}\wt(\mathsf{B}_{u,i}) + \sum_{u=r-j+1}^r |\mathsf{B}_{u,i}| + \sum_{u=1}^r \beta_{u,i} = q_1$. We note that the $j$ blocks $\mathsf{B}_{r-j+1,i},\dots,\mathsf{B}_{r,i}$ are precisely the blocks that consist only of processed elements.
        
\item Determine the set $P$ of processed elements after the $i^{th}$ iteration to consist of the elements that contribute weight to some block together with the elements of last $j$ blocks of $\mathsf{B}_{1,i}^{\beta_{1,i}}\cdots \mathsf{B}_{r,i}^{\beta_{r,}}$.
Determine $L_i$ to consist of the elements not in $P$ ordered naturally, followed by the elements in $P$ ordered naturally.
        
\item Take the largest processed element $p$, and take the leftmost block $\mathsf{B}_{v,i}$ {$(1\leq v\leq r)$} whose leader is either greater than $p$ (in the natural order) and not processed or less than (or equal to) $p$ and is processed.
        
\item  Let $u_1<_{L_i} \cdots<_{L_i} u_t$ be all the elements that appear either in $\mathsf{B}_{v,i}$ or in blocks to the right of $\mathsf{B}_{v,i}$ (note that $p=u_t$). 
Apply the map $u_x \mapsto u_{x-1}$ {where $u_1\mapsto u_{t}$ (fix $\F_1\setminus\{u_1,...,u_t\}$)}.
        
\item Form $L_{i-1}$ by modifying $L_i$ so that the ordering among the non-processed elements (including $u_t$) is natural.
        
\item For $u\neq v$, $\beta_{u,i-1} = \beta_{u,i}$ and $\beta_{v,i-1} = \beta_{v,i} + 1$.
\end{enumerate}
\end{lemma}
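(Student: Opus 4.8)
The plan is to show that each of the six steps inverts the corresponding part of a single iteration of Algorithm \ref{alg}, so that applying them to the state $(\F_{1,i},w_{1,i})$ reconstructs $(\F_{1,i-1},w_{1,i-1})$. Write $L_i$ and $P_i$ for the ordered list and the set of processed elements present after the $i$-th iteration, and let $p_i$ be the element processed in that iteration. The first task is to record the structural facts about the $i$-th state that follow from the earlier lemmas: by Lemma \ref{lem:processedOrdering} exactly $i$ elements have been processed and $p_1<\cdots<p_i$, so $|P_i|=i$ and $p_i=\max P_i$; by Lemma \ref{lem:processedOrdering} together with step (3) of the algorithm, $L_i$ lists the non-processed elements in natural order and then the processed ones in natural order, so in $<_{L_i}$ every non-processed element precedes every processed element; and by Lemmas \ref{lem:BlockOrder} and \ref{lem:ElementsProcessed}, a block of $\F_{1,i}$ consists only of processed elements exactly when its leader is processed, which happens exactly for a suffix $\mathsf{B}_{r-j+1,i},\dots,\mathsf{B}_{r,i}$ of the block sequence, each such block having weight $0$, while every other block contains all of its own weight-contributing elements.

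Granting these, Step (1) is handled by a counting identity: since the input $(\F_1,v_1)$ has total weight $0$ and each iteration lowers the total value by $1$, after $i$ iterations $\sum_u\beta_{u,i}=q_1-i$, while Lemma \ref{lem:ElementsProcessed} gives $i=|P_i|=\sum_{u=1}^{r-j}\wt(\mathsf{B}_{u,i})+\sum_{u=r-j+1}^{r}|\mathsf{B}_{u,i}|$ for the true value of $j$; adding these produces the displayed equation, and since moving from $j$ to $j+1$ changes its left-hand side by $|\mathsf{B}_{r-j,i}|-\wt(\mathsf{B}_{r-j,i})\ge 1$, the solution is unique and equals the number of processed-only blocks. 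Steps (2) and (6) then follow immediately from the structural facts above and from the fact that the forward iteration only decremented the value of the active block.

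The crux is Steps (3)--(5), which undo the cyclic shift. Let $\mathsf{B}_{v,i-1}$ be the block acted on in iteration $i$, with leader $a=p_i=p$; the claim is that Step (3) singles out $\mathsf{B}_{v,i}$. The blocks $\mathsf{B}_{1,i-1},\dots,\mathsf{B}_{v-1,i-1}$ are untouched in iteration $i$, and each has a leader that precedes $a$ in $<_{L_{i-1}}$ and is therefore non-processed (as $a$ is non-processed by Lemma \ref{lem:ElementsProcessed} and non-processed elements come first in $L_{i-1}$) and $<p$ in the natural order; hence in the $i$-th state none of them satisfies the Step (3) condition. After the shift the leader of $\mathsf{B}_{v,i}$ is the $<_{L_{i-1}}$-second-smallest element $i_2$ among the elements of $\mathsf{B}_{v,i-1},\dots,\mathsf{B}_{r,i-1}$---or $a$ itself in the degenerate case when $\mathsf{B}_{v,i-1}$ is a rightmost singleton---and if $i_2$ is non-processed it exceeds $a=p$ naturally, while if it is processed it is $\le p$; in every case $\mathsf{B}_{v,i}$ meets the condition, so it is the leftmost such block. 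Once $\mathsf{B}_{v,i}$ is identified, listing the elements of $\mathsf{B}_{v,i},\dots,\mathsf{B}_{r,i}$ in $<_{L_i}$-order yields $i_2<_{L_i}\cdots<_{L_i}i_j<_{L_i}i_1=a=p$, because $a$ was appended last to the list and the order of the remaining elements was unchanged; these are the $u_1,\dots,u_t$ of Step (4) with $u_t=p$, and since the forward shift was $i_x\mapsto i_{x+1}$ (cyclically), in the $u$-indexing it reads $u_t\mapsto u_1$ and $u_x\mapsto u_{x+1}$ otherwise, whose inverse is exactly the map in Step (4). Finally $L_i=L_{i-1}+[a]$ and iteration $i$ processed only $a=u_t$, so restoring the natural order among the non-processed elements of the $(i-1)$-st state---namely those of the $i$-th state together with $u_t$---recovers $L_{i-1}$, justifying Step (5). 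I expect the block-identification in Step (3) to be the main obstacle: the delicate points are verifying that the element $i_2$ landing in the leader slot of $\mathsf{B}_{v,i}$ always falls into exactly one side of the stated dichotomy, and handling cleanly the degenerate rightmost-singleton case.
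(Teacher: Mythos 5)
Your proof is correct and follows essentially the same route as the paper's: the invariance of (number of processed elements) plus (total value) $=q_1$ identifies $j$ in Step (1), Lemmas \ref{lem:BlockOrder}--\ref{lem:ElementsProcessed} recover $P$ and $L_i$, the block acted on in iteration $i$ is pinned down as the leftmost block satisfying the Step (3) dichotomy, and Step (4) visibly inverts the cyclic shift. Your added details --- the monotonicity argument giving uniqueness of $j$ and the explicit case analysis on the new leader $i_2$, including the rightmost-singleton case --- are correct refinements of points the paper treats more tersely.
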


\begin{proof}
Note that in each iterative step in Algorithm \ref{alg}, the number of processed elements increases by 1 and the weight of some block decreases by 1. 
This implies that the total sum between the number of processed elements and the sum of the values of the blocks is fixed, equaling $q_1$ at each iteration.
Since the blocks are ordered increasingly {(by their leaders)} according to $L_i$ by Lemma \ref{lem:BlockOrder}, there is some $j$ {($0\leq j \leq r$)} such that the last $j$ blocks $\mathsf{B}_{r-j+1},\dots,\mathsf{B}_r$ consist only of processed elements, and the remaining blocks contain some element that is not processed.
By Lemma \ref{lem:ElementsProcessed}, the processed elements either lie in one of the last $j$ blocks or contribute weight to some block. 
Combining everything discussed, we have that
    \[
    \sum_{u=1}^{r-j}\wt(\mathsf{B}_{u,i}) + \sum_{u=r-j+1}^r |\mathsf{B}_{u,i}| + \sum_{u=1}^r \beta_{u,i} = q_1.
    \]

Of the processed elements after the $i^{th}$ iteration, the largest, say $p$, must be the element that was processed in the $i^{th}$ iteration by Lemma \ref{lem:processedOrdering}. 
In particular, $p$ was not a processed element after the $(i-1)^{th}$ iteration. Instead, it was the leader of some block, and this block must be $\mathsf{B}_{v,i-1}$. 
Otherwise, the left-most block of $\F_{1,i}$ whose leader is either greater than $p$ (in the natural order) and not processed or less than (or equal to) $p$ and is processed is either to the left of $\mathsf{B}_{v,i}$ (if $p$ is the leader of a block to the left of $\mathsf{B}_{v,i-1}$) or to the right of $\mathsf{B}_{v,i}$ (if $p$ is the leader of a block to the right of $\mathsf{B}_{v,i-1}$).

Then $\F_{1,i}$ was obtained by applying the map in step (2) of Algorithm \ref{alg} which is clearly reversed in step (4) of this lemma.

Finally, it is clear that $L_{i-1}$ can be formed as in step (5) and that $\beta_{v,i-1} = \beta_{v,i}+1$ and $\beta_{u,i-1} = \beta_{u,i}$ for $u\neq v$.
\end{proof}

\begin{example}
Let us follow the steps in Lemma \ref{lem:ReverseIteration} to determine the valued ordered chain forest $(\F_1,v_1)$ of weight 5 that is used as input in Algorithm \ref{alg} to obtain the valued ordered chain forest
\[
(\F_1',v_1')=[3,1,4][5,2,6][7][8]^1.
\]
The number of elements that contribute weight to some block (elements 1 and 2) added to the sum of the values of $v_1'$ over all blocks is $2+1=3$. 
Since $(\F_1,v_1)$ has weight 5, we must have exactly two processed elements that are contained in blocks that consist only of processed elements, which must be the rightmost blocks. 
In other words, the last two elements appearing in $\F_1'$, namely, 7 and 8, must be processed elements.

We now perform the reverse iterations provided by Lemma \ref{lem:ReverseIteration} of each iterative step in Algorithm \ref{alg}, which resulted in $(\F_1',v_1')$:

\begin{itemize}

\item $[3,1,4][5,2,6][7][8]^1$, $P=\{1,2,7,8\}$, $L=[3,4,5,6,1,2,7,8]$
        
\item $[3,1,4][5,2,6][8]^1[7]^1$, $P=\{1,2,7\}$, $L=[3,4,5,6,8,1,2,7]$

\item $[3,1,4][5,2,6][7]^2[8]^1$, $P=\{1,2\}$, $L=[3,4,5,6,7,8,1,2]$

\item $[2,8,3]^1[4,1,5][6]^2[7]^1$, $P=\{1\}$, $L=[2,3,4,5,6,7,8,1]$

\item $[1,7,2]^2[3,8,4][5]^2[6]^1$, $P=\emptyset$, $L=[1,2,3,4,5,6,7,8]$

\end{itemize}
Therefore, $(\F_1,v_1) = [1,7,2]^2[3,8,4][5]^2[6]^1$.
\end{example}

The fact that each iterative step in Algorithm \ref{alg} can be reversed implies that the map $\phi$ is a bijection, and therefore has an associated inverse $\phi^{-1}$.
We also emphasize that the blocks of an element $(\F,v,A) \in DCF(q,s)$ and the blocks of $\phi((\F,v,A))$ have the same lengths as they appear in order.
In particular, writing $\phi((\F,v,A)) = (\F',v',A)$, we have that $\F$ and $\F'$ have the same number of blocks and $\gamma(\F,\ell) = \gamma(\F',\ell)$ for every $0\leq \ell\leq s-1$.
Thus, we have the following result.

\begin{proposition}\label{prop:phi-bijection}
The map $\phi:DCF(q,s) \longrightarrow \phi(DCF(q,s))$ is a bijection. 
In particular, $\phi:DCF(q,s,k,\ell,m) \longrightarrow \phi(DCF(q,s,k,\ell,m))$ is a bijection, {for $1\leq m\leq k \leq s$, $0\leq \ell\leq s-1$, $0\leq q$}.
\end{proposition}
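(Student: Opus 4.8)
The plan is to show that $\phi$ is a bijection onto its image by constructing a two-sided inverse, using Lemma \ref{lem:ReverseIteration} as the essential tool. Since $\phi$ acts as the identity on the distinguished part $(\F_2, v_2) = \mathsf{C}_1^{\gamma_1} \cdots \mathsf{C}_p^{\gamma_p}$ and on the set $A$, and only modifies the non-distinguished part $(\F_1, v_1)$ via Algorithm \ref{alg}, it suffices to show that the map on non-distinguished parts induced by Algorithm \ref{alg} is injective; once injectivity is established on non-distinguished parts, the statement for the full triples follows immediately because the $\mathsf{C}_i$ blocks and $A$ are carried along unchanged and can be read off from the output.

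First I would argue that Algorithm \ref{alg} terminates: each iteration increases $|P|$ by $1$ and decreases the total value $\sum v_1(\mathsf{B})$ by $1$ (step (5)), and the process stops once no block contains a non-processed element with positive value; since $\sum v_1(\mathsf{B})$ is a finite nonnegative integer, termination occurs after exactly $q_1 := \sum_{\mathsf{B} \in B(\F_1)} v_1(\mathsf{B})$ iterations (noting that by Lemmas \ref{lem:BlockOrder} and \ref{lem:ElementsProcessed} every step with remaining positive value can indeed be carried out). Next, I would invoke Lemma \ref{lem:ReverseIteration}: given the valued ordered chain forest $(\F_{1,i}, w_{1,i})$ produced after the $i^{\text{th}}$ iteration, together with the knowledge of $q_1$ (which is recoverable from the output of $\phi$ since $|A| + \sum_{\mathsf{B} \in B(\F)} v(\mathsf{B}) = q$ is fixed and the distinguished part is unchanged, so $q_1 = q - |A| - \sum_{\mathsf{C} \in B_A(\F)} v(\mathsf{C})$), the six-step procedure in that lemma deterministically recovers $(\F_{1,i-1}, w_{1,i-1})$ and the list $L_{i-1}$. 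Iterating this reverse procedure starting from $(\F_1', v_1') = (\F_{1,q_1}, w_{1,q_1})$ down to $i = 0$ recovers $(\F_1, v_1)$ uniquely. Hence the map on non-distinguished parts is injective, so $\phi$ is injective, and being injective onto its image it is a bijection onto $\phi(DCF(q,s))$.

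For the refined statement, I would observe what is already noted just before the proposition: $\phi$ preserves the block lengths in order, hence preserves the number of blocks $k$ and, for every $\ell$, the value $\gamma(\F, \ell) = \gamma(\F', \ell)$ (since $\gamma$ depends only on the lengths of the blocks and their order). Therefore $\phi$ maps $DCF(q,s,k,\ell,m)$ into $\phi(DCF(q,s,k,\ell,m))$, and the restriction of the bijection $\phi$ to this subset is again a bijection onto its image; this gives the second sentence for all $1 \leq m \leq k \leq s$, $0 \leq \ell \leq s-1$, $q \geq 0$.

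The main obstacle is not in this proposition itself — which is essentially a corollary of Lemma \ref{lem:ReverseIteration} — but rather in verifying cleanly that the reverse procedure of Lemma \ref{lem:ReverseIteration} is genuinely well-defined when applied to an \emph{arbitrary} element of $\phi(DCF(q,s))$ rather than to something already known to be in the image of $\phi$; however, since we only need injectivity (we define $\phi(DCF(q,s))$ as the image), this subtlety does not actually arise: we apply the reverse procedure only to outputs of $\phi$, where Lemmas \ref{lem:BlockOrder}--\ref{lem:ReverseIteration} guarantee all steps are valid. Thus the only real work is bookkeeping: confirming that the data needed to run the reverse iterations ($q_1$, the identification of the trailing blocks of all-processed elements, and the ordering $L_i$) are all reconstructible from the output triple alone, which follows from the structural lemmas already proved.
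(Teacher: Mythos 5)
Your proposal is correct and follows essentially the same route as the paper: injectivity comes from the step-by-step reversibility of Algorithm \ref{alg} guaranteed by Lemma \ref{lem:ReverseIteration}, and the refined statement follows because $\phi$ preserves the block lengths in order, hence preserves $k$ and $\gamma(\cdot,\ell)$. One minor correction: the algorithm does not in general run exactly $q_1$ iterations---it terminates as soon as every block with positive value consists only of processed elements (in Example \ref{ex:68}, $q_1=5$ but only four elements are processed)---though this does not harm your argument, since the reverse procedure is iterated until the reconstructed processed set is empty, a stopping point determined by the output alone.
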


In fact, we can determine the image of $DCF(q,s,k,\ell,m)$ through $\phi$ completely, as described in the following proposition.

\begin{proposition}\label{prop:phi-image}
Let
$1\leq m\leq k \leq s,\ 0\leq \ell\leq s-1,\ 0\leq q$ be integers.
An element $(\F,v,A)$ is contained in $\phi(DCF(q,s,k,\ell,m))$ if and only if $(\F,v)= \mathsf{B}_1^{\beta_1}\cdots \mathsf{B}_r^{\beta_r} \mathsf{C}_1^{\gamma_1}\cdots \mathsf{C}_p^{\gamma_p}$ has the following properties. 
For some integer $0\leq j\leq r$,

\begin{enumerate}

\item The blocks $\mathsf{C}_1,\dots, \mathsf{C}_p$ consist only of elements in $A$, and the remaining blocks do not contain elements in $A$.
The blocks $\mathsf{C}_1,\dots, \mathsf{C}_p$ are ordered decreasingly according to their leaders, and the leader of each block $\mathsf{C}_i$ is the smallest element in the block.

\item The $j$ blocks $\mathsf{B}_{r-j+1},\dots, \mathsf{B}_r$ are naturally ordered, and the leader of each of these blocks is the smallest element in that block. 

\item The blocks $\mathsf{B}_1,\dots,\mathsf{B}_{r-j}$ are naturally ordered, and $\beta_1=\dots=\beta_{r-j} = 0$.

\item $\sum_{i=1}^{r-j} \wt(\mathsf{B}_i) + \sum_{i=j-r+1}^r (|\mathsf{B}_i| + \beta_i) + |A| + \sum_{i=1}^p\gamma_i = q$. 

\item $\F$ has $k$ blocks and $\gamma(\F,\ell) = m$.
\end{enumerate}
\end{proposition}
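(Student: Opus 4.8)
The strategy is to unravel the iterative structure of Algorithm~\ref{alg} and show that the listed conditions (1)--(5) are exactly the constraints that survive after all iterations terminate. The two directions are handled separately: first that any element of $\phi(DCF(q,s,k,\ell,m))$ satisfies (1)--(5), and then the converse, that any $(\F,v,A)$ satisfying (1)--(5) lies in the image.

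For the forward direction, suppose $(\F,v,A) = \phi((\F_0,v_0,A))$ with $(\F_0,v_0,A) \in DCF(q,s,k,\ell,m)$. The distinguished part $\mathsf{C}_1^{\gamma_1}\cdots\mathsf{C}_p^{\gamma_p}$ is untouched by the algorithm, so property~(1) is immediate from Definition~\ref{A-dist forests}. For the non-distinguished part, I would invoke Lemma~\ref{lem:BlockOrder} applied to the final iteration: the blocks are increasingly ordered by leaders according to the terminal list $L$, and each leader is the $L$-smallest element in its block. Then I use Lemma~\ref{lem:ElementsProcessed} to split the $\mathsf{B}$-blocks into those consisting entirely of processed elements (the rightmost ones, say $j$ of them, by the order of processing and Lemma~\ref{lem:processedOrdering}) and those containing a non-processed element. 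Because the algorithm terminates only when every block with a non-processed element has value $0$, the blocks $\mathsf{B}_1,\dots,\mathsf{B}_{r-j}$ have $\beta_i = 0$; and since Lemma~\ref{lem:ElementsProcessed} says elements contributing weight must be processed while non-processed elements in a mixed block cannot precede the leader, the blocks $\mathsf{B}_1,\dots,\mathsf{B}_{r-j}$ are actually naturally ordered with their natural-order-smallest element as leader (one must check that the terminal $L$-order on non-processed elements coincides with the natural order, which follows from the fact that processed elements are appended to the end of $L$ in increasing order, leaving the non-processed elements in their original relative order). This gives (2) and (3). Property~(4) is the conservation of total weight: each iteration moves one unit from block-value to block-weight while also promoting one element to ``processed'', and the elements of the last $j$ blocks are processed, so the count $\sum \wt + \sum(|\mathsf{B}_i| + \beta_i) + |A| + \sum\gamma_i$ equals the original $|A| + \sum v_0(\mathsf{B}) = q$. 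Property~(5) follows from the remark already made in the text before Proposition~\ref{prop:phi-bijection}: $\phi$ preserves the number of blocks and the value of $\gamma(\cdot,\ell)$.

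For the converse, given $(\F,v,A)$ satisfying (1)--(5), I would reconstruct its preimage by running the reverse process of Lemma~\ref{lem:ReverseIteration} repeatedly until no further reverse step is possible, and argue that the result is a genuine valued $A$-distinguished ordered chain forest of weight $q$ with $k$ blocks and $\gamma(\cdot,\ell)=m$. Condition~(4) tells us exactly how many reverse iterations to perform: the integer $j$ identified in step~(1) of Lemma~\ref{lem:ReverseIteration} is well-defined precisely because of the balance equation in (4), and each reverse step strictly decreases the number of processed elements, so the process halts after finitely many steps with $P = \emptyset$. At termination, every block has value equal to its accumulated weight-plus-original-value and weight $0$ on the nose — i.e., an $A$-distinguished forest — because once $P=\emptyset$ Lemma~\ref{lem:ElementsProcessed} forces no element to contribute weight. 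One then checks that applying $\phi$ to this reconstruction returns $(\F,v,A)$, which is exactly the assertion that the forward algorithm and the Lemma~\ref{lem:ReverseIteration} process are mutually inverse step by step; this was essentially established in the discussion preceding Proposition~\ref{prop:phi-bijection}, so here it only remains to observe that conditions (1)--(3) guarantee the hypotheses of Lemma~\ref{lem:ReverseIteration} are met at each stage (in particular that at each intermediate forest the blocks are $L$-ordered with $L$-smallest leaders, which is what lets us identify the element $p$ processed last and the block $\mathsf{B}_{v}$ it came from).

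\textbf{Main obstacle.} The delicate point is the bookkeeping of the two orderings — the natural order versus the dynamic list order $<_L$ — and showing they reconcile correctly at termination: specifically, that the blocks $\mathsf{B}_1,\dots,\mathsf{B}_{r-j}$ which still contain non-processed elements end up \emph{naturally} ordered with natural-order-smallest leaders (not merely $L$-ordered), and that the count $j$ of fully-processed blocks in condition~(4) matches the count of reverse iterations needed. Getting the termination/reconstruction correspondence airtight — i.e., that the halting configuration of the reverse process is always a legitimate $A$-distinguished forest and not something with leftover positive-value mixed blocks — is where the argument needs the most care; everything else is a fairly mechanical application of Lemmas~\ref{lem:BlockOrder}, \ref{lem:processedOrdering}, \ref{lem:ElementsProcessed}, and \ref{lem:ReverseIteration}.
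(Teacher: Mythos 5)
Your proposal is correct and follows essentially the same route as the paper: the forward direction combines Lemmas \ref{lem:BlockOrder}, \ref{lem:processedOrdering}, and \ref{lem:ElementsProcessed} with the weight/value conservation of Algorithm \ref{alg} to derive (1)--(5), and the converse is obtained by repeatedly applying the reverse process of Lemma \ref{lem:ReverseIteration}, exactly as in the paper (which in fact treats that direction even more briefly than you do).
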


\begin{proof}
Let $(\mathcal{G},v,A) = \phi^{-1}((\F,v,A))$ be such that $(\mathcal{G},v,A) \in DCF(q,s,k,\ell,m)$. 
We have that 
\[
|A| + \sum_{\mathsf{B}\in B(\mathcal{G})}v(\mathsf{B})=q.
\] 
At each iterative step in Algorithm \ref{alg}, {taking as input} the non-distinguished part of $(\mathcal{G},v,A)$, an element is processed and the value of $v(\mathsf{B})$ decreases by 1 for some block $\mathsf{B}\in B(\mathcal{G})$. 
Therefore, if $P$ is the set of processed elements when Algorithm \ref{alg} terminates {with output $(\F,v)$}, we have that
\[
|P| + \sum_{i=1}^r\beta_i + |A| + \sum_{i=1}^p \gamma_i = q.
\]
Additionally, by Lemma \ref{lem:processedOrdering}, the processed elements are processed in increasing order, so the resulting ordering $L$ is given by taking the non-processed elements in increasing order followed by the processed elements in increasing order.
Since by Lemma \ref{lem:BlockOrder}, the blocks $\mathsf{B}_1,\dots,\mathsf{B}_r$ are ordered increasingly according to $L$, it follows that if $j$ is the number of blocks that consist only of processed elements, then these blocks are $\mathsf{B}_{r-j+1},\dots,\mathsf{B}_r$, and are naturally ordered by their leaders, which are the smallest elements in their respective blocks.
The remaining processed elements contribute weight to one of the blocks $\mathsf{B}_1,\dots,\mathsf{B}_{r-j}$.
We also have that $\beta_1=\cdots=\beta_{r-j} = 0$, otherwise Algorithm \ref{alg} would not have terminated since the blocks $\mathsf{B}_1,\dots,\mathsf{B}_{r-j}$ would contain elements which are not processed.

Therefore, we have that $|P| = \sum_{i=1}^{r-j}\wt(\mathsf{B}_i) + \sum_{i=j-r+1}^r |\mathsf{B}_i|$, hence
\begin{align*}
q=&|P| + \sum_{i=1}^r\beta_i + \sum_{i=1}^p \gamma_i\\
=&\sum_{i=1}^{r-j}\wt(\mathsf{B}_i) + \sum_{i=j-r+1}^r (|\mathsf{B}_i|+\beta_i) +|A| + \sum_{i=1}^p \gamma_i.
\end{align*}
Thus, $(\F,v,A)$ satisfies (2) -- (4). Moreover, (1) and (5) follow directly from the definitions.

On the other hand, if $(\F,v,A)$ satisfies (1) -- (5), then by similar lines of reasoning, if we repeatedly apply the process described in Lemma \ref{lem:ReverseIteration} to $(\F,v,A)$, we will obtain an element of $DCF(q,s,k,\ell,m)$.
\end{proof}

We are now ready to prove Conjecture \ref{conj:main}.


\section{Panhandle matroids are Ehrhart positive}\label{sec:main_proof}
This section contains a proof of Conjecture \ref{conj:main} (also printed below). We note that the validity of Conjecture \ref{conj:main} was shown in \cite{hanelyetal} to imply the Ehrhart positivity of panhandle matroids (Theorem \ref{thm:Ehrhart-paving-improved-formula}).
The proof of Conjecture \ref{conj:main} has an inclusion-exclusion flavor, utilizing the combinatorial interpretation discussed in Proposition \ref{prop:i-interpretation}. In particular, Proposition \ref{prop:i-interpretation} provides an interpretation for each of the summands in Conjecture \ref{conj:main} in terms of valued $A$-distinguished ordered chain forests. In the proof we show that most of the summand terms cancel each other out (in a nontrivial way). The only exception is the summand for $i=0$, which only partly cancels out. 
The non-canceling part is exactly the set of ordered chain forests we are after. 
The proof is presented in detail below.

\begin{repconjecture}{conj:main}
Let $1\leq m\leq k\leq s$, $q\geq 0$, and $0\leq \ell\leq s-1$ be integers. 
Then the number of naturally ordered chain forests $\F$ of $[s]$ with $k$ blocks and weight $q$ such that $\gamma(\F,\ell) = m$ is given by 
\begin{equation*}
|\CF(q,s,k,\ell,m)|=\sum_{i=0}^q(-1)^i\binom{s}{i} \Pi^{s-\ell-m}_{-i+1,s-1-\ell-i} \Pi^{\ell-(k-m)}_{s-\ell-i,s-1-i} \binom{k-1+q-i}{k-1}.
\end{equation*}
\end{repconjecture}

\begin{proof}
Let $S$ consist of the elements $(\F',v',\emptyset)=\phi((\F,v,\emptyset))$ in the image  $\phi( DCF(q,s,k,\ell,m))$, with the property that $\sum_{i=1}^k\wt(\mathsf{B}_i) = q$, where $\F' = \mathsf{B}_1\cdots \mathsf{B}_k$. 
Note that since $\sum_{i=1}^k\wt(\mathsf{B}_i) = q$, we have that $j=0$ in item (2) of Proposition \ref{prop:phi-image}, $v'$ is identically zero by item (4), and the blocks are naturally ordered by item (3). 
Therefore, if $\phi((\F,v,\emptyset)) = (\F',v',\emptyset)$ is an element of $S$, then in {the iterations in} Algorithm \ref{alg} {with input $(\F,v)$}, {there is no appearance of a block} consisting only of processed elements.

Given that $v'$ is identically zero, we may identify $(\F',v',\emptyset)$ with $\F'$. 
Since $\F'\in \CF(q,s,k,\ell,m)$ by the paragraph above, and since every element of $\CF(q,s,k,\ell,m)$ is in the image $\phi (DCF(q,s,k,\ell,m))$, $S$ is in bijection with $\CF(q,s,k,\ell,m)$.
In particular, $|S| = |\CF(q,s,k,\ell,m)|$. 
We define $T$ to be the set of elements  $(\F,v,\emptyset)\in DCF(q,s,k,\ell,m)$, such that $\phi((\F,v,\emptyset))\notin S$. 
Essentially, $T$ is the set of elements $(\F,v,\emptyset)$ where $\phi((\F,v,\emptyset))$ has at least one block consisting only of processed elements.
By Proposition \ref{prop:i-interpretation} and since $\phi$ is a bijection on $DCF(q,s,k,\ell,m)$, we have that the term {arising for $i=0$ in} the sum in Conjecture \ref{conj:main} is equal to $|\CF(q,s,k,\ell,m)| + |T|.$

Thus, we have  that $ \sum_{i=0}^q(-1)^i\binom{s}{i} \Pi^{s-\ell-m}_{-i+1,s-\ell-1-i} \Pi^{\ell-(k-m)}_{s-\ell-i,s-1-i} \binom{k-1+q-i}{k-1}$ is equal to
\begin{align*}
&\sum_{i=0}^q\sum_{A\in \binom{[s]}{i}}\sum_{(\F,v,A)\in DCF(q,s,k,\ell,m)}(-1)^{|B_A(\F)|}\\
&=|\CF(q,s,k,\ell,m)|+|T| + \sum_{i=1}^q\sum_{A\in \binom{[s]}{i}}\sum_{(\F,v,A)\in DCF(q,s,k,\ell,m)}(-1)^{|B_A(\F)|}.
\end{align*}
Hence, it suffices to show that
\[
|T| + \sum_{i=1}^q\sum_{A\in \binom{[s]}{i}}\sum_{(\F,v,A)\in DCF(q,s,k,\ell,m)}(-1)^{|B_A(\F)|}=0.
\]

\noindent To this end, we define $\mathscr{N}$ to be the set of elements $(\F,v,A)\in DCF(q,s,k,\ell,m)$ where $(-1)^{|B_A(\F)|} = -1$ (elements with a negative contribution to the sum) and $\mathscr{P}$ to be the set of elements $(\F,v,A)$ where $(-1)^{|B_A(\F)|} = 1$ and either $(\F,v,A)\in T$ or $A\neq \emptyset$ (elements with positive contribution that wish to show cancel out with elements in $\mathscr{N}$). 
In particular, each element in $\mathscr{N}$ has an odd number of blocks in its distinguished part, and each element in $\mathscr{P}$ has an even number of blocks in its distinguished part.

Now, we define a bijection $f:\mathscr{N}\longrightarrow \mathscr{P}$, which will complete the proof since the existence of $f$ implies that the elements contributing ``$+1$'' and the elements contributing ``$-1$'' cancel out in pairs in the sum $$|T| + \sum_{i=1}^q\sum_{A\in \binom{[s]}{i}}\sum_{(\F,v,A)\in DCF(q,s,k,\ell,m)}(-1)^{|B_A(\F)|}.$$
Let $(\F,v,A) = (\mathsf{B}_1^{\beta_1}\cdots \mathsf{B}_r^{\beta_r} \mathsf{C}_1^{\gamma_1}\cdots \mathsf{C}_p^{\gamma_p},A)$ be an element of $\mathscr{N}$ where the blocks $\mathsf{C}_1,\dots,\mathsf{C}_p$ are precisely the blocks that consist only of elements in $A$.
Let $\phi((\F,v,A)) = (\mathsf{B}_1'^{\beta_1'}\cdots \mathsf{B}_r'^{\beta_r'} \mathsf{C}_1^{\gamma_1}\cdots \mathsf{C}_p^{\gamma_p},A)$.
We define the map $f$ via two distinguishing  cases:

\begin{enumerate}

\item If $j = 0$ in item (2) of Proposition \ref{prop:phi-image} applied to $\phi((\F,v,A))$, or $j\geq 1$ and the leader of $\mathsf{C}_1$ is greater than the leader of $\mathsf{B}_r'$, then we define
\[
f((\F,v,A)) = \phi^{-1}((\mathsf{B}_1'^{\beta_1'}\cdots \mathsf{B}_r'^{\beta_r'} \mathsf{C}_1^{\gamma_1}\cdots \mathsf{C}_p^{\gamma_p},A\setminus \mathsf{C}_1)),
\]
where by a slight abuse of notation we associate $\mathsf{C}_1$ with the set of its elements{;} 
    
\item {Otherwise, }if $j\geq 1$ and the leader of $\mathsf{C}_1$ is less than the leader of $\mathsf{B}_r'$ (or $A=\emptyset$), then we define
\[
f((\F,v,A)) = \phi^{-1}((\mathsf{B}_1'^{\beta_1'}\cdots \mathsf{B}_r'^{\beta_r'} \mathsf{C}_1^{\gamma_1}\cdots \mathsf{C}_p^{\gamma_p},A\cup \mathsf{B}_r')).
\]
Again, we are associating $\mathsf{B}_r'$ with the set of its elements.
\end{enumerate}

In the first case, $f((\F,v,A))$ has 1 less block in its distinguished part, and in the second case, $f((\F,v,A))$ has 1 more block in its distinguished part. 
Hence, $f((\F,v,A))\in \mathscr{P}$ in both cases.
Additionally, it follows from Proposition \ref{prop:phi-image} that $(\mathsf{B}_1'^{\beta_1'}\cdots \mathsf{B}_r'^{\beta_r'} \mathsf{C}_1^{\gamma_1}\cdots \mathsf{C}_p^{\gamma_p},A\setminus \mathsf{C}_1)$ in case (1) and $(\mathsf{B}_1'^{\beta_1'}\cdots \mathsf{B}_r'^{\beta_r'} \mathsf{C}_1^{\gamma_1}\cdots \mathsf{C}_p^{\gamma_p},A\cup \mathsf{B}_r')$ in case (2) are elements of  $\phi(DCF(q,s,k,\ell,m))$, i.e., $f$ is well-defined.

We now show that the map $f$ is injective. First note that if $f((\F,v,A)) = f((\mathcal{G},w,E))$ and the distinguished parts of $(\F,v,A)$ and $(\mathcal{G},w,E)$ have the same number of blocks, then the same case {i.e., (1) or (2)} in the definition of $f$ applies to both and it follows immediately from the fact that $\phi$ is a bijection that $(\F,v,A)=(\mathcal{G},w,E)$.
Therefore, we can assume without loss of generality that the distinguished part of $(\F,v,A)$ has $p$ blocks and the distinguished part of $(\mathcal{G},w,E)$ has $p-2$ blocks and that case (1) in the definition of $f$ applies to $(\F,v,A)$ and case (2) applies to $(\mathcal{G},w,E)$. 
Then, writing 
\[
\phi((\F,v,A)) = (\mathsf{B}_1'^{\beta_1'}\cdots \mathsf{B}_r'^{\beta_r'} \mathsf{C}_1^{\gamma_1}\cdots \mathsf{C}_p^{\gamma_p},A),
\]
where the blocks $\mathsf{C}_i$ are the blocks containing elements of $A$, we have that 
\[
f((\F,v,A)) = \phi^{-1}((\mathsf{B}_1'^{\beta_1'}\cdots \mathsf{B}_r'^{\beta_r'} \mathsf{C}_1^{\gamma_1}\cdots \mathsf{C}_p^{\gamma_p},A\setminus \mathsf{C}_1)).
\]
Hence, 
\[
\phi(f((\F,v,A))) = (\mathsf{B}_1'^{\beta_1'}\cdots \mathsf{B}_r'^{\beta_r'} \mathsf{C}_1^{\gamma_1}\cdots \mathsf{C}_p^{\gamma_p},A\setminus \mathsf{C}_1),
\]
and since $\phi(f((\mathcal{G},w,E)))=\phi(f((\F,v,A)))$ we must have that
\[
\phi((\mathcal{G},w,E)) = (\mathsf{B}_1'^{\beta_1'}\cdots \mathsf{B}_r'^{\beta_r'} \mathsf{C}_1^{\gamma_1}\cdots \mathsf{C}_p^{\gamma_p},A\setminus (\mathsf{C}_1\cup \mathsf{C}_2)).
\]
By item (4) of Proposition \ref{prop:phi-image} applied to $\phi((\F,v,A))$, for $0\leq j\leq r$, we have that
\[
\sum_{i=1}^{j-r} \wt(\mathsf{B}_i') + \sum_{i=j-r+1}^r (|\mathsf{B}_i'| + \beta_i') + |A| + \sum_{i=1}^p\gamma_i = q,
\]
or equivalently,
\[
\sum_{i=1}^{j-r} \wt(\mathsf{B}_i') + \sum_{i=j-r+1}^r (|\mathsf{B}_i'| + \beta_i') + (|\mathsf{C}_1| +\gamma_1)+(|\mathsf{C}_2| + \gamma_2) + |A\setminus (\mathsf{C}_1\cup \mathsf{C}_2)| + \sum_{i=3}^p\gamma_i = q.
\]
Thus, by items (4) and (2) of Proposition \ref{prop:phi-image} applied to $\phi((\mathcal{G},w,E))$, we have that there is some $j'\geq 2$ such that the last $j'$ blocks among $\mathsf{B}_1'^{\beta_1'},\dots, \mathsf{B}_r'^{\beta_r'}, \mathsf{C}_1^{\gamma_1}, \mathsf{C}_2^{\gamma_2}$ of $\phi((\mathcal{G},w,E))$ are ordered increasingly according to their leaders. 
In particular, the leader of  $\mathsf{C}_1$ is less than the leader of $\mathsf{C}_2$. 
However, the fact that these are the first two blocks in the distinguished part of $\phi((\F,v,A))$ implies that the leader of $\mathsf{C}_1$ is greater than the leader of $\mathsf{C}_2$.
This is a contradiction and completes the proof of the injectivity of $f$.

Although it is not hard to show directly that $f$ is a surjection, we present a different argument to complete the proof. 
Let us fix suitable $s,k,l$ and allow $m$ to vary between $1$ and $k$. 
For $1\leq m \leq k$, the fact that $f$ is an injection implies that, for some $p_m\geq 0$,
\[
\sum_{i=0}^q(-1)^i\binom{s}{i} \Pi^{s-\ell-m}_{-i+1,s-\ell-1-i} \Pi^{\ell-(k-m)}_{s-\ell-i,s-1-i} \binom{k-1+q-i}{k-1} = |\CF(q,s,k,\ell,m)| + p_m.
\]
Specifically, $p_m= |\mathscr{P}| - |f(\mathscr{N})|$.
Hence, if we prove that $p_m = 0$, then $f(\mathscr{N}) = \mathscr{P}$ and it follows that $f$ is a bijection.

Summing over $m$, we have that
\begin{align*}
    &\sum_{m=0}^k\sum_{i=0}^q(-1)^i\binom{s}{i} \Pi^{s-\ell-m}_{-i+1,s-\ell-1-i} \Pi^{\ell-(k-m)}_{s-\ell-i,s-1-i} \binom{k-1+q-i}{k-1}\\
    =& 
    \sum_{i=0}^q(-1)^i\binom{s}{i} \sum_{m=0}^k\left(\Pi^{s-\ell-m}_{-i+1,s-\ell-1-i} \Pi^{\ell-(k-m)}_{s-\ell-i,s-1-i} \right)\binom{k-1+q-i}{k-1}\\
    =& \sum_{i=0}^q(-1)^i\binom{s}{i} \Pi^{s-k}_{-i+1,s-1-i}\binom{k-1+q-i}{k-1}\\
    =& \sum_{m=0}^k \left(|\CF(q,s,k,\ell,m)| + p_m\right)\\
    =&|\CF(q,s,k)| + \sum_{m=0}^k p_m.
\end{align*}
However, we know from Equation (\ref{eq:Lah}) that 
\[
|\CF(q,s,k)| = \sum_{i=0}^q(-1)^i\binom{s}{i} \Pi^{s-k}_{-i+1,s-1-i}\binom{k-1+q-i}{k-1},
\]
which implies that $\sum_{m=0}^k p_m = 0$, proving that $f$ is {indeed} a bijection.
Therefore, the result follows.
\end{proof}
This completes the proof of Conjecture \ref{conj:main}, and with it the proof of Theorem \ref{thm:PosMain}, namely, Ehrhart positivity of panhandle matroids.

\section{Ehrhart-coefficient upper-bound conjecture for paving matroids}\label{sec:upper_bound}

Recall that $U_{r,n}$ denotes the uniform matroid with rank $r$ and $n$ elements, matroid polytope $\mathcal{P}_{U_{r,n}}$ is precisely the hypersimplex $\Delta_{r,n}$. 
It is known that the Ehrhart polynomial of $\mathcal{P}_{U_{r,n}}$ has positive coefficients \cite{ferroniPositive}. 
The uniform matroid $U_{r,n}$ is the unique matroid with rank $r$ and $n$ elements and $\binom{n}{r}$ bases.
On the other hand, by a result of Dinolt \cite{DinoltMinimalMoatroids} and Murty \cite{MurtyMinimalMatroids}, there is a unique connected matroid, which we have referred to as the minimal matroid and denoted $T_{r,n}$, with rank $r$ and $n$ elements that has the minimum possible number of bases.

In what follows, for two polynomials $p(t)$ and $q(t)$ of degree $n$, $p(t)\preceq q(t)$ means that the $d^{{th}}$-coefficient of $p(t)$ is bounded above by the $d^{{th}}$-coefficient of $q(t)$, for every $0\leq d\leq n$.
In \cite{FerroniMinimal}, Ferroni conjectured the following.
\begin{conjecture}[Conjecture 1.5 in \cite{FerroniMinimal}]\label{conj:upperBound}

For a connected matroid $M$ with rank $r$ and $n$ elements,
\[
\ehr_{T_{r,n}}(t) \preceq \ehr_M(t) \preceq \ehr_{U_{r,n}}.
\]
\end{conjecture}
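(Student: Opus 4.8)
The displayed bound is stated as a conjecture, and indeed its lower half $\ehr_{T_{r,n}}(t)\preceq\ehr_M(t)$ cannot hold for all connected $M$: the connected non-Ehrhart-positive matroids of \cite{FerroniMatroids2022} have a negative Ehrhart coefficient, whereas every coefficient of $\ehr_{T_{r,n}}(t)$ is positive \cite{FerroniMinimal}. So the plan is to prove the upper half $\ehr_M(t)\preceq\ehr_{U_{r,n}}(t)$ in the largest class we can control -- all paving matroids, which is Theorem~\ref{thm:UpperBound} -- and to be explicit about where a proof for general connected matroids, and any valid lower bound, would have to come from. Throughout I would work in the slicing picture of \cite{FerroniMatroids2022,hanelyetal}: $\Po_{U_{r,n}}=\Delta_{r,n}$, the assignment $M\mapsto\ehr_M$ is valuative, and for paving $M$ the complement $\Delta_{r,n}\setminus\Po_M$ splits into panhandle polytopes $\Po_{\Pan_{r,s,n}}$ with pairwise disjoint interiors, one per stressed hyperplane, with $\Po_{T_{r,n}}=\Po_{\Pan_{r,r,n}}$ being one of the possible pieces.

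For the upper bound on a paving matroid $M$, I would use the explicit Ehrhart formula for paving matroids of \cite{hanelyetal}, which writes $\ehr_{U_{r,n}}(t)-\ehr_M(t)$ as a sum of ``correction'' polynomials, one per stressed hyperplane, each an alternating sum with the same structure as the right-hand side of Conjecture~\ref{conj:main}. The goal is then to show every correction polynomial has nonnegative coefficients, which I would do with the toolkit of Sections~\ref{sec:prelim}--\ref{sec:main_proof}: introduce the family of valued $A$-distinguished ordered chain forests adapted to this new alternating sum so that its $i$-th summand becomes a signed count (in the spirit of Proposition~\ref{prop:i-interpretation}), transport it through a value-to-weight map $\phi$ as in Section~\ref{sec:phi}, and then cancel every $i\geq 1$ summand, together with the undesired part of the $i=0$ summand, by a sign-reversing involution $f$ as in Section~\ref{sec:main_proof}. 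Essentially all the work is in choosing the right auxiliary objects for the paving-correction formula; granting that, the bijectivity of $\phi$ and the injectivity/counting argument for $f$ should carry over with bookkeeping adjustments.

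For a general connected $M$ the slicing picture does not apply, since an arbitrary matroid need not have a stressed hyperplane; and the lower bound is, as noted, false outside a restricted range -- this is where I expect the real obstacle to lie. The natural attempt at the general upper bound is an induction along stressed-hyperplane relaxations: if $M$ has such a hyperplane, its relaxation $M'$ satisfies $\Po_M\subset\Po_{M'}$, and one would want $\ehr_M(t)\preceq\ehr_{M'}(t)$ coefficientwise so that it suffices to treat $M'$, iterating until only matroids with no stressed hyperplane remain, for which one could try to expand $\ehr_M$ in the valuative decomposition of $\Po_M$ into Schubert-matroid polytopes and compare termwise with $\ehr_{U_{r,n}}$. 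Both halves of this are genuinely hard: even coefficientwise monotonicity $\ehr_M\preceq\ehr_{M'}$ under a single relaxation is only a ``Conjecture~\ref{conj:main}-type'' statement outside the paving setting, and controlling the signs in such a valuative expansion (or finding any other route from an arbitrary connected matroid to the uniform one) is exactly what is missing. As for the lower bound, since $\ehr_{T_{r,n}}(t)\preceq\ehr_M(t)$ already fails inside the sparse paving class, no proof of the statement as written is possible, and the most the present methods can offer is a replacement restricted to families where $T_{r,n}=\Pan_{r,r,n}$ really is the minimum -- for instance Schubert or panhandle matroids, where the inequality reduces to a comparison of which panhandle pieces are carved from $\Delta_{r,n}$ and the natural input is the coefficientwise lower bound on $\ehr_{\Pan_{r,s,n}}$ from Remark~\ref{rmk:better positivity}. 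In sum, the plan yields a full proof of the upper inequality for all paving matroids, while the general connected upper bound and a correctly formulated lower bound remain open.
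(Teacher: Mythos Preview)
Your proposal is correct and follows essentially the same route as the paper: the statement is a conjecture whose lower half is known to fail, and the upper half is established for paving matroids by showing that each stressed-hyperplane ``correction'' polynomial $\psi_{r,s,n}(t)$ has nonnegative coefficients, via a combinatorial interpretation of the relevant alternating sum (the paper's Theorem~\ref{thm:Uppermain}) proved with the same $\phi$-map and sign-reversing involution machinery as Conjecture~\ref{conj:main}.

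One small correction to your discussion of the general case: the coefficientwise monotonicity $\ehr_M(t)\preceq\ehr_{\Rel_H(M)}(t)$ under a single stressed-hyperplane relaxation is \emph{not} restricted to paving $M$. Theorem~\ref{thm:Ehrhart-paving-improved-formula} holds for an arbitrary matroid with a stressed hyperplane, and since the nonnegativity of $\psi_{r,s,n}(t)$ depends only on $r,s,n$, the inequality $\ehr_M\preceq\ehr_{\Rel_H(M)}$ follows in full generality (this is what underlies Corollary~\ref{relaxation cor}). So that ``half'' of your stated difficulty is already handled; the genuine obstruction to the general upper bound is exactly the one you name elsewhere---that an arbitrary connected matroid need not admit any stressed hyperplane, so one cannot relax all the way to $U_{r,n}$.
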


It is known that the lower-bound portion of the conjecture $\ehr_{T_{r,n}}(t) \preceq \ehr_M(t)$ is not true in general as Ferroni himself proved that there are matroids whose Ehrhart polynomials have negative coefficients \cite{FerroniMatroids2022}, while the Ehrhart polynomials of minimal matroids have positive coefficients \cite{FerroniMinimal}.
However, at the time of writing, there are no counterexamples to the upper-bound portion of the conjecture, $\ehr_M(t) \preceq \ehr_{U_{r,n}}$. In this section, building on the work in \cite{hanelyetal}, we show that the upper-bound portion of Conjecture \ref{conj:upperBound} holds when $M$ is a paving matroid.

{
\renewcommand{\thetheorem}{\ref{thm:UpperBound}}
\begin{theorem}
    Let $M$ be a paving matroid of rank $r$ with ground set $[n]$. Then
    \[
    \ehr_M(t) \preceq  \ehr_{U_{r,n}}(t).
    \]
\end{theorem}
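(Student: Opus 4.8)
The plan is to reduce Theorem \ref{thm:UpperBound} to a purely enumerative inequality, in the same spirit as the reduction of Theorem \ref{thm:PosMain} to Conjecture \ref{conj:main}. Recall from \cite{hanelyetal} that any paving matroid $M$ of rank $r$ on $[n]$ has a matroid polytope obtained from the hypersimplex $\Delta_{r,n}=\Po_{U_{r,n}}$ by slicing off pieces that are (translates of) panhandle matroid polytopes $\Po_{\Pan_{r,s_j,n}}$ corresponding to the stressed hyperplanes of $M$; concretely there is an expression of the form
\[
\ehr_M(t)=\ehr_{U_{r,n}}(t)-\sum_{j} \left(\ehr_{U_{r,n}}(t)-\ehr_{\Pan_{r,s_j,n}}(t)\right),
\]
where the sum runs over the stressed hyperplanes (each of size $s_j\ge r$) and the summands are the Ehrhart contributions of the removed pieces. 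Since $\Pan_{r,n-1,n}=U_{r,n}$, each removed piece is nonempty exactly when $s_j\le n-1$. Thus $\ehr_M(t)\preceq \ehr_{U_{r,n}}(t)$ would follow immediately if we knew that each individual difference $\ehr_{U_{r,n}}(t)-\ehr_{\Pan_{r,s,n}}(t)$ has nonnegative coefficients, i.e. that $\ehr_{\Pan_{r,s,n}}(t)\preceq \ehr_{\Pan_{r,n-1,n}}(t)$, together with the fact that the pieces sliced off are disjoint (so the contributions genuinely subtract without overcounting — this disjointness is exactly what makes the paving hypothesis usable, and is established in \cite{hanelyetal}).

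So the first step is to isolate the statement: \textbf{for $r\le s\le n-1$ one has $\ehr_{\Pan_{r,s,n}}(t)\preceq \ehr_{\Pan_{r,s+1,n}}(t)$}, or more directly $\ehr_{\Pan_{r,s,n}}(t)\preceq \ehr_{U_{r,n}}(t)$ coefficient-wise. Using Theorem \ref{formula for positivity}, $\ehr_{\Pan_{r,s,n}}(t)=\frac{n-s}{(n-1)!}\binom{t+n-s}{n-s}\varphi_{r,s,n}(t)$, and the coefficient-wise comparison should be rephrased, after clearing the common positive factors, as a comparison between the $\varphi$-polynomials (suitably normalized) — and then, via the combinatorial identity underlying Conjecture \ref{conj:main}, as a comparison of counts of naturally ordered chain forests. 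The second step is to set up the analogue of the $\CF(q,s,k,\ell,m)$ machinery: the coefficients of $\varphi_{r,s,n}$ and of the uniform analogue are, after the reduction, expressible as alternating sums over $i$ of products of binomials and $\Pi$-terms, and Proposition \ref{prop:i-interpretation} gives each summand a sign-weighted count of valued $A$-distinguished ordered chain forests. The third step is to run essentially the same bijective/sign-reversing-involution argument as in the proof of Conjecture \ref{conj:main} — using $\phi$, Proposition \ref{prop:phi-image}, and a modified version of the map $f:\mathscr N\to\mathscr P$ — but now tracking the extra parameter that distinguishes $\Pan_{r,s,n}$ from $U_{r,n}=\Pan_{r,n-1,n}$, so that the non-canceling part is a genuine set of chain forests whose cardinality is manifestly $\le$ the one coming from the uniform matroid (rather than exactly equal). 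Concretely, I expect the relevant enumerative statement to say that a certain refined count $|\CF(q,s,k,\ell,m)|$ (or a sum of such over an appropriate range) is monotone in $s$, which then yields the coefficient-wise inequality.

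The main obstacle is the third step: unlike in Conjecture \ref{conj:main}, where the target is an exact equality and the involution $f$ pairs up everything except a cleanly identifiable set, here we need an \emph{inequality}, so the combinatorial argument must produce an injection (not a bijection) from the "uniform" chain forests into a set built from the "panhandle" chain forests, and we must argue that this injection's image misses exactly the forests that violate the panhandle constraint $|B\cap[s]|\ge r-1$ (equivalently, a weight/leader condition on the chain forest). Getting the bookkeeping right — matching the parameter $s$ in the chain-forest model to the geometric slicing, and ensuring the injection respects all of the statistics $(q,k,\ell,m)$ — is where the real work lies; the rest (clearing denominators in Theorem \ref{formula for positivity}, invoking disjointness of the slices from \cite{hanelyetal}, and assembling the telescoping sum) is routine. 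A cleaner alternative for the final assembly, which I would pursue if the per-piece inequality $\ehr_{\Pan_{r,s,n}}(t)\preceq\ehr_{U_{r,n}}(t)$ turns out to be awkward to combine, is to prove the stronger stepwise monotonicity $\ehr_{\Pan_{r,s,n}}(t)\preceq\ehr_{\Pan_{r,s+1,n}}(t)$ directly at the level of chain forests and then telescope; this also immediately gives the stressed-hyperplane relaxation statement promised at the end of the introduction, since relaxing a stressed hyperplane replaces one panhandle piece by a smaller (or empty) one.
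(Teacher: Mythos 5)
There is a genuine gap, and it starts with your decomposition identity. When the matroid polytope of a paving matroid is carved out of the hypersimplex, the pieces sliced off \emph{are} (lattice-equivalent to) panhandle polytopes, and each such piece meets $\Po_M$ in a face that is the polytope of $U_{r-1,s}\oplus U_{1,n-s}$. The correct inclusion--exclusion therefore reads
$\ehr_M(t)=\ehr_{U_{r,n}}(t)-\sum_{j}\bigl(\ehr_{\Pan_{r,s_j,n}}(t)-\ehr_{U_{r-1,s_j}\oplus U_{1,n-s_j}}(t)\bigr)$,
not $\ehr_{U_{r,n}}(t)-\sum_j\bigl(\ehr_{U_{r,n}}(t)-\ehr_{\Pan_{r,s_j,n}}(t)\bigr)$ as you wrote (already at the level of leading coefficients your identity fails, since the volume removed per piece is the volume of the panhandle, not the volume of its complement). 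Consequently the per-piece inequality you isolate, $\ehr_{\Pan_{r,s,n}}(t)\preceq\ehr_{U_{r,n}}(t)$ (or the monotonicity $\ehr_{\Pan_{r,s,n}}\preceq\ehr_{\Pan_{r,s+1,n}}$), is not what the slicing structure demands; what is needed is the opposite-flavored \emph{lower} bound on the panhandle, namely that $\ehr_{\Pan_{r,s,n}}(t)-\ehr_{U_{r-1,s}\oplus U_{1,n-s}}(t)$ has nonnegative coefficients. Worse, the inequality you reduce to is itself an instance of Ferroni's conjecture for panhandle matroids (which are in general not paving, so Theorem \ref{thm:UpperBound} does not apply to them), and it is not proved in this paper nor obviously easier than the theorem you are trying to establish; the paper only remarks (Remark \ref{rmk:better positivity}) that it would be interesting in view of Ferroni's conjecture.

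The paper's actual route is the relaxation formula of Theorem \ref{thm:Ehrhart-paving-improved-formula}: relaxing a stressed hyperplane $H$ with $|H|=s$ adds $\frac{n-s}{(n-1)!}\binom{t-1+n-s}{n-s}\psi_{r,s,n}(t)$, which equals exactly the difference $\ehr_{\Pan_{r,s,n}}(t)-\ehr_{U_{r-1,s}\oplus U_{1,n-s}}(t)$ above; by Lemma \ref{lem:RelPaving}, relaxing all stressed hyperplanes of a paving matroid yields $U_{r,n}$, so it suffices to show $\psi_{r,s,n}(t)\succeq 0$. That nonnegativity does not follow from Conjecture \ref{conj:main} itself: it requires a new enumerative identity, Theorem \ref{thm:Uppermain}, interpreting the shifted alternating sum (note the $\Pi^{s-\ell-m}_{-i,s-\ell-2-i}$ with indices shifted by one relative to Conjecture \ref{conj:main}) as the cardinality of a modified class of chain forests $\CF^1$, proved by adapting the $\phi$/involution machinery. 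Your plan gestures at ``the same sign-reversing argument with an injection instead of a bijection,'' but without the correct target inequality, the relaxation telescoping, and the new objects $\CF^1$, the essential content of the proof is missing.
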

}

In order to communicate a result refining Theorem \ref{thm:UpperBound} and explain how it follows from the contents of this section, we will need the following definitions and known results.

\begin{definition}

For a matroid $M$ with basis system $\B$, let $\Rel_S(\B):=\B\cup\binom{S}{r}$, for a set $S\subseteq E$ containing no basis.
We say that $S$ \emph{can be relaxed} if $\B\cup\binom{S}{r}$ is a matroid basis system. 
In this case, the resulting matroid is called the \defterm{relaxation of $M$ at $S$}, denoted by $\Rel_S(M)$.

A hyperplane $H$ of a matroid $M$ is a \defterm{stressed hyperplane} if every subset of $H$ of size $r$ is a circuit.
\end{definition}

\noindent It turns out that  stressed hyperplanes are connected to paving matroids.

\begin{proposition}{\cite[Proposition 3.16]{FerroniRelaxation}}\label{prop:stressed_hyps_paving}
A matroid is paving if and only if all its hyperplanes are stressed.
\end{proposition}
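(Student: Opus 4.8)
The plan is to prove both implications directly from the definitions, relying only on three elementary facts: a hyperplane of a rank-$r$ matroid is a flat of rank $r-1$; a superset of a dependent set is dependent; and $|D| > \operatorname{rank}(D)$ for every dependent set $D$. No Ehrhart theory or chain-forest machinery is needed here.

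For the forward direction, I would assume $M$ is paving and let $H$ be a hyperplane, so $\operatorname{rank}(H) = r-1$. Take an arbitrary $X \subseteq H$ with $|X| = r$. Then $\operatorname{rank}(X) \leq \operatorname{rank}(H) = r-1 < |X|$, so $X$ is dependent. If $X$ were not a circuit, it would properly contain a circuit $C$, and then $|C| \leq |X| - 1 = r-1$, contradicting the hypothesis that every circuit of $M$ has at least $r$ elements. Hence $X$ is a circuit, so $H$ is stressed. (If $|H| < r$ there are no such $X$, and $H$ is vacuously stressed, which causes no trouble.)

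For the backward direction, I would assume every hyperplane of $M$ is stressed and suppose toward a contradiction that $M$ is not paving, i.e. there is a circuit $C$ with $|C| \leq r-1$. Then $\operatorname{cl}(C)$ is a flat of rank $|C|-1 \leq r-2$, and I would enlarge $\operatorname{cl}(C)$ to a hyperplane $H$ by repeatedly choosing an element outside the current flat and passing to the closure of the union (each such step raises the rank by exactly one), stopping once the rank reaches $r-1$; this $H$ contains $C$. Since $C$ is dependent and $C \subseteq H$, the flat $H$ is itself dependent, so $|H| \geq \operatorname{rank}(H) + 1 = r$; thus I can pick a set $C^{+}$ with $C \subseteq C^{+} \subseteq H$ and $|C^{+}| = r$. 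Because $H$ is stressed, $C^{+}$ is a circuit; but $C$ is a dependent set with $C \subsetneq C^{+}$, contradicting the minimality of the circuit $C^{+}$. This contradiction shows $M$ is paving.

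The only step needing a little care — and the closest thing to an obstacle — is the extension of $\operatorname{cl}(C)$ to a hyperplane in the backward direction: this uses the standard fact that in a matroid every flat of rank strictly less than $r-1$ is contained in a flat of rank $r-1$, which follows from the fact that adjoining an element lying outside a flat increases the rank by exactly one. The degenerate cases ($r \leq 1$, or a hyperplane with fewer than $r$ elements) should also be checked, but each is either vacuous or immediate. Everything else is a routine manipulation of the notions of circuit, flat, and rank.
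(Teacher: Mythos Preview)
Your argument is correct. Both directions are handled cleanly: the forward direction uses that any $r$-subset of a rank-$(r-1)$ flat is dependent and then invokes the paving hypothesis to rule out a smaller circuit inside it; the backward direction correctly extends the closure of a too-small circuit to a hyperplane, observes that this hyperplane must have at least $r$ elements because it contains a dependent set, and then derives a contradiction from the stressed hypothesis. The extension step and the degenerate cases are exactly the points that require a moment's thought, and you have addressed them adequately.

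As for comparison with the paper: the paper does not supply a proof of this proposition at all. It is quoted as \cite[Proposition~3.16]{FerroniRelaxation} and used as a black box to justify Lemma~\ref{lem:RelPaving}. Your elementary proof from first principles is therefore more than what the paper itself provides, and it matches the standard argument one would find in the cited source.
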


\begin{proposition}\label{prop:relaxation}\cite[Theorem~1.2]{FerroniRelaxation}
If $H$ is a stressed hyperplane of $M$, then $H$ can be relaxed. 
\end{proposition}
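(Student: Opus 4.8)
The plan is to verify directly that $\B' := \B \cup \binom{H}{r}$ satisfies the basis-exchange axiom; since $\B$ is nonempty and every member of $\B'$ has cardinality $r$, this is precisely what it means for $H$ to be relaxable. The only structural input I will use is that $H$ is a flat of rank $r-1$ and that, by the stressed hypothesis, every $(r-1)$-subset of $H$ is independent in $M$: any such subset extends to an $r$-subset of $H$, which is a circuit by assumption, hence minimally dependent.

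The heart of the argument is a dichotomy for independent $(r-1)$-sets $I$ of $M$, which I would establish first. If $I \subseteq H$, then the closure $\overline I$ is a flat of rank $r-1$ contained in the rank-$(r-1)$ flat $H$, so $\overline I = H$ (a flat properly inside $H$ would have strictly smaller rank); consequently every one-element extension $I \cup f$ with $f \notin I$ already lies in $\B'$, because either $f \in H$, making $I \cup f$ an $r$-subset of $H$, or $f \notin H = \overline I$, making $I \cup f$ independent and hence a basis of $M$. If instead $I \not\subseteq H$, then $\overline I \neq H$, and $I \cup f \in \B'$ holds exactly when $f \notin \overline I$, since $I \cup f$ can no longer be an $r$-subset of $H$.

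With this dichotomy in hand, the exchange axiom follows quickly. Take $B_1, B_2 \in \B'$ and $x \in B_1 \setminus B_2$, and set $I := B_1 \setminus x$, an independent $(r-1)$-set. If $I \subseteq H$: since $B_1 \neq B_2$ have the same cardinality, choose any $y \in B_2 \setminus B_1$; then $y \notin I$, so $I \cup y \in \B'$. If $I \not\subseteq H$: I claim some $y \in B_2 \setminus B_1$ avoids $\overline I$, which by the dichotomy yields $I \cup y \in \B'$. If no such $y$ existed, then $B_2 \setminus B_1 \subseteq \overline I$, and combined with $B_2 \cap B_1 \subseteq B_1 \setminus x = I \subseteq \overline I$ this gives $B_2 \subseteq \overline I$. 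If $B_2$ is an original basis this is impossible, as $\rank B_2 = r > r-1 = \rank \overline I$. If $B_2 \in \binom{H}{r}$, then $B_2$ is a circuit of rank $r-1$ whose closure is $H$, so $H = \overline{B_2} \subseteq \overline I$ forces $\overline I = H$ (two flats of equal rank, one inside the other), contradicting $I \not\subseteq H$.

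I expect the only real subtlety to be this final case split on whether the target basis $B_2$ is one of the original bases of $M$ or one of the freshly adjoined $r$-subsets of $H$, since the two subcases are ruled out by genuinely different mechanisms — a rank inequality in the first, and the fact that a rank-$(r-1)$ circuit contained in $H$ must span $H$ in the second. I would also dispatch the degenerate case $|H| = r-1$ at the outset: there $\binom{H}{r} = \emptyset$, the stressed condition is vacuous, and $\Rel_H(M) = M$, so there is nothing to prove (and the general argument covers it regardless).
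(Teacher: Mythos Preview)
Your argument is correct. The dichotomy for independent $(r-1)$-sets is exactly the right structural observation, and your case split on whether $B_2$ is an old basis or a newly adjoined $r$-subset of $H$ handles the only genuine obstruction cleanly; the closure-rank argument in the second subcase (forcing $\overline{I}=H$ from $H=\overline{B_2}\subseteq\overline{I}$) is airtight.

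There is, however, nothing in the paper to compare against: the proposition is simply imported from Ferroni's paper \cite{FerroniRelaxation} as a citation, with no proof given here. So your write-up is not an alternative to the paper's argument but rather a self-contained replacement for an external reference. That is a perfectly reasonable thing to include if the goal is to make the exposition independent of \cite{FerroniRelaxation}, and your proof is short enough that it costs little. If you do include it, you might streamline slightly: the dichotomy and the exchange verification can be merged (you only ever apply the dichotomy to $I=B_1\setminus x$), and the degenerate case $|H|=r-1$ need not be singled out since, as you note, it is subsumed by the general argument with $\binom{H}{r}=\emptyset$.
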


\noindent When $S$ is a circuit-hyperplane, the relaxation of a matroid $M$ at $S$ coincides with the common notion of a relaxation as in \cite[Section 1.5]{Oxley}, and in \cite[Definition 3.4]{hanelyetal} that, when $S$ is a stressed hyperplane, the stressed-hyperplane relaxation is recovered.
In  this section, the sets relaxed are always stressed hyperplanes, though in principle it is possible to relax other sets in matroids too. 
In \cite{hanelyetal}, a formula for the Ehrhart polynomial of a relaxation of {$M$}  was determined in terms of $\ehr_M(t)$, which is presented in the following theorem.

\begin{theorem}[Theorem 5.9 in \cite{hanelyetal}]\label{thm:Ehrhart-paving-improved-formula}
Let $M$ be a rank $r$ matroid with ground set $[n]$ and let $H$ be a stressed hyperplane such that $|H|=s\geq r$.  Then
\begin{align*}
&\ehr_{\Rel_H(M)}(t)=\ehr_{M}(t)+\frac{n-s}{(n-1)!}\binom{t-1+n-s}{n-s} \psi_{r,s,n}(t)
\end{align*}
where
\[
\psi_{r,s,n}(t) = \sum_{i=0}^{s-r}(-1)^i\binom{s}{i}\sum_{\ell=0}^{s-1}(n-2-\ell)!\ell!\binom{s-2-\ell-i+t(s-r-i+1)}{s-1-\ell}\binom{s-1-i+t(s-r-i)}{\ell}.
\]
\end{theorem}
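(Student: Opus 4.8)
The plan is to exhibit $\Po_{\Rel_H(M)}$ as $\Po_M$ together with the panhandle polytope $\Po_{\Pan_{r,s,n}}$ glued along a common facet, and then subtract Ehrhart polynomials. Relabelling the ground set, assume $H=[s]$. Since $H$ is a hyperplane (a flat of rank $r-1$) and is stressed, every $(r-1)$-subset of $H$ is independent, so the restriction $M|_H$ is the uniform matroid $U_{r-1,s}$; in particular the closure of every $(r-1)$-subset of $H$ is exactly $H$, whence every $r$-set $B$ with $|B\cap H|=r-1$ is a basis of $M$. Using the rank-inequality description $\Po_N=\{x\in\Delta_{r,n}:\sum_{i\in S}x_i\le\rank_N(S)\text{ for all }S\subseteq[n]\}$ of a matroid polytope, one checks that $\rank_{\Rel_H(M)}(S)=\rank_M(S)$ unless $S\subseteq H$ with $|S|\ge r$ (where the rank jumps from $r-1$ to $r$), and that consequently $\Po_{\Rel_H(M)}\cap\{x:\sum_{i\in H}x_i\le r-1\}=\Po_M$. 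Therefore $\ehr_{\Rel_H(M)}(t)-\ehr_M(t)$ equals the number of lattice points of $t\Po_{\Rel_H(M)}$ with $\sum_{i\in H}x_i\ge(r-1)t+1$.

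Next I would pin down the relevant region. Let $\Po':=\Po_{\Rel_H(M)}\cap\{x:\sum_{i\in H}x_i\ge r-1\}$. On the one hand $\Po'\subseteq\Delta_{r,n}\cap\{x:\sum_{i\in H}x_i\ge r-1\}$; the face of this hypersimplex slice cut out by $\sum_{i\in H}x_i=r-1$ is $\{x\in\Delta_{r,n}:\sum_{i\in H}x_i=r-1\}\cong\Delta_{r-1,s}\times\Delta_{1,n-s}$, whose vertices are the $e_B$ with $|B\cap H|=r-1$; these, together with the apexes $e_B$, $B\in\binom{H}{r}$, exhaust the vertices of the slice, so $\Delta_{r,n}\cap\{x:\sum_{i\in H}x_i\ge r-1\}=\Po_{\Pan_{r,s,n}}$ exactly. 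On the other hand each such $e_B$ is a basis of $\Rel_H(M)$ (for $|B\cap H|=r$ by construction, for $|B\cap H|=r-1$ by the paragraph above), so $\Po_{\Pan_{r,s,n}}\subseteq\Po_{\Rel_H(M)}$. Hence $\Po'=\Po_{\Pan_{r,s,n}}$, and its facet $F:=\Po'\cap\{x:\sum_{i\in H}x_i=r-1\}$ equals $\Delta_{r-1,s}\times\Delta_{1,n-s}$. Splitting the lattice points of $t\Po'$ according to whether $\sum_{i\in H}x_i$ equals $(r-1)t$ or exceeds it, and using $\ehr_{\Delta_{1,n-s}}(t)=\binom{t+n-s-1}{n-s-1}$ together with multiplicativity of Ehrhart polynomials on direct products, I obtain
\[
\ehr_{\Rel_H(M)}(t)-\ehr_M(t)=\ehr_{\Pan_{r,s,n}}(t)-\ehr_{\Delta_{r-1,s}}(t)\binom{t+n-s-1}{n-s-1}.
\]

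It then remains to simplify the right-hand side to the claimed closed form. Substituting $\ehr_{\Pan_{r,s,n}}(t)=\frac{n-s}{(n-1)!}\binom{t+n-s}{n-s}\varphi_{r,s,n}(t)$ from Theorem \ref{formula for positivity}, applying the same theorem to $U_{r-1,s}=\Pan_{r-1,s-1,s}$ to expand $\ehr_{\Delta_{r-1,s}}(t)$, using Pascal's rule $\binom{t+n-s}{n-s}=\binom{t-1+n-s}{n-s}+\binom{t-1+n-s}{n-s-1}$, and using the identity $\binom{a}{b}-\binom{a-1}{b}=\binom{a-1}{b-1}$ to compare $\varphi_{r,s,n}$ with $\psi_{r,s,n}$ (which differ only in the top argument of a single binomial factor), the right-hand side should collapse to $\frac{n-s}{(n-1)!}\binom{t-1+n-s}{n-s}\psi_{r,s,n}(t)$, finishing the proof.

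The geometric reductions above are clean, so I expect the main obstacle to be precisely this last step: verifying that $\ehr_{\Pan_{r,s,n}}(t)-\ehr_{F}(t)$ reorganizes into exactly $\frac{n-s}{(n-1)!}\binom{t-1+n-s}{n-s}\psi_{r,s,n}(t)$. This is a finite but intricate manipulation of the alternating sums $\sum_i(-1)^i\binom{s}{i}(\cdots)$ and of the inner sums over $\ell$, and it is essentially the computational core of \cite{hanelyetal}; the bookkeeping, not the geometry, is the delicate part. An alternative route that bypasses $\varphi_{r,s,n}$ is to observe that the geometry already produces the matroid-independent identity $\ehr_{\Rel_H(M)}(t)-\ehr_M(t)=\#\{x\in t\Delta_{r,n}\cap\Z^n:\sum_{i\in[s]}x_i\ge(r-1)t+1\}$ and to evaluate this count directly by an inclusion–exclusion analogous to the one behind Theorem \ref{formula for positivity}; this yields the same formula.
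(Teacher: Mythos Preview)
This paper does not contain its own proof of the statement: Theorem~\ref{thm:Ehrhart-paving-improved-formula} is quoted verbatim from \cite{hanelyetal} (as Theorem~5.9 there) and used as a black box, so there is no in-paper argument to compare against.

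That said, your outline is correct and in fact coincides with the approach of \cite{hanelyetal}. The geometric decomposition $\Po_{\Rel_H(M)}=\Po_M\cup\Po_{\Pan_{r,s,n}}$ glued along the facet $\Delta_{r-1,s}\times\Delta_{1,n-s}$, yielding
\[
\ehr_{\Rel_H(M)}(t)-\ehr_M(t)=\ehr_{\Pan_{r,s,n}}(t)-\ehr_{U_{r-1,s}\oplus U_{1,n-s}}(t),
\]
is exactly the identity referenced in Remark~\ref{rmk:better positivity} of the present paper (with attribution to the proof of Theorem~5.8 in \cite{hanelyetal}). Your identification of the remaining work as the algebraic simplification of $\ehr_{\Pan_{r,s,n}}(t)-\ehr_F(t)$ into the $\psi_{r,s,n}$ form is also accurate; this is indeed the computational core carried out in \cite{hanelyetal}. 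The one caveat is that you have only sketched this last step (Pascal's rule, comparing $\varphi$ with $\psi$) rather than executed it, so strictly speaking the closed form remains asserted rather than verified in your write-up.
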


For our purposes, the significance of Theorem \ref{thm:Ehrhart-paving-improved-formula} is that it implies that if the polynomial
\[
\binom{t-1+n-s}{n-s} \psi_{r,s,n}(t)
\]
has nonnegative coefficients for all $r\leq s\leq n-1$, then 
\[
\ehr_{M}(t) \preceq \ehr_{\Rel_H(M)}(t).
\]
Since $\binom{t-1+n-s}{n-s}$ is a polynomial with nonnegative coefficients, it suffices to show that $\psi_{r,s,n}(t)$ has nonnegative coefficients.

\begin{remark}\label{rmk:better positivity}
We have that
 \[ \frac{n-s}{(n-1)!}\binom{t-1+n-s}{n-s} \psi_{r,s,n}(t) = \ehr_{\Pan_{r,s,n}}(t)-\ehr_{U_{r-1,s}\oplus U_{1,n-s}}(t)
 \] (see the proof of Theorem 5.8 in \cite{hanelyetal} for instance). Therefore, our proof of non-negativity of $\psi_{r,s,n}(t)$, together with the factorization 
 $\ehr_{U_{r-1,s}\oplus U_{1,n-s}}(t)=\ehr_{\Delta_{r-1,s}\times \Delta_{1,n-s}}(t)$ 
imply that \[\ehr_{\Delta_{r-1,s}\times \Delta_{1,n-s}}(t)\preceq \ehr_{\Pan_{r,s,n}}(t).\]
In other words, the Ehrhart polynomial of the pandhandle matroid is bounded below (coefficient-wise) by the Ehrhart polynomial of a product of a hypersimplex with a simplex. This may be interesting in view of Ferroni's conjecture (see also Theorem \ref{thm:UpperBound}) asserting that \[
    \ehr_{\Pan_{r,s,n}}(t) \preceq  \ehr_{\Delta_{r,n}}(t).
    \]
It is also a strengthening of Theorem \ref{thm:PosMain}, since $\ehr_{\Delta_{r-1,s}\times \Delta_{1,n-s}}(t)
 =\ehr_{\Delta_{r-1,s}}(t)\ehr_{\Delta_{1,n-s}}(t)$ and
hypersimplices are Ehrhart positive \cite{ferroniPositive}. 
Even though Theorem \ref{thm:Uppermain} alone implies the non-negativity of Ehrhart polynomials of panhandle matroids, we believe Conjecture \ref{conj:main} is interesting in its own right from a purely enumerative perspective, since it enumerates a more natural set of objects. Therefore, we provide full details for the proof of Conjecture \ref{conj:main} and then discuss how the arguments can be adapted to prove the claim in Theorem \ref{thm:Uppermain}.
\end{remark}

Combining the previous result  and the following lemma about relaxations of paving matroids, which follows from Propositions \ref{prop:stressed_hyps_paving} and \ref{prop:relaxation}, we can deduce that Ferroni's upper-bound conjecture follows for the class of paving matroids from showing the nonnegativity of $\psi_{r,s,n}(t)$.

\begin{lemma}[Corollary 3.17 in \cite{FerroniRelaxation}]\label{lem:RelPaving}
Let $M$ be a paving matroid with rank $r$ on the ground set $[n]$.
The uniform matroid $U_{r,n}$ is obtained after relaxing each (stressed) hyperplane of $M$.
\end{lemma}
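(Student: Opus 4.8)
The plan is to track how relaxation acts on the non-bases of a paving matroid and show that iterating it reaches $U_{r,n}$. The starting point is a structural observation about paving matroids. Write $\mathcal{H}^{\geq r}(M)$ for the collection of hyperplanes of $M$ of size at least $r$. If $A$ is a non-basis, i.e.\ a dependent $r$-subset, then $A$ contains an independent $(r-1)$-subset (every set of size $<r$ is independent in a paving matroid of rank $r$) yet is itself dependent, so $\rank_M(A)=r-1$ and its closure $\overline{A}$ is a hyperplane of size $\geq r$ containing $A$; conversely every $r$-subset of a hyperplane in $\mathcal{H}^{\geq r}(M)$ is dependent. Moreover, if $H\neq H'$ are distinct hyperplanes and some $r$-set $A$ lies in $H\cap H'$, then $\overline{A}$ is a rank-$(r-1)$ flat contained in both $H$ and $H'$, hence equal to both, a contradiction; so distinct members of $\mathcal{H}^{\geq r}(M)$ meet in at most $r-1$ elements, and the non-bases of $M$ partition as the disjoint union $\bigsqcup_{H\in\mathcal{H}^{\geq r}(M)}\binom{H}{r}$.

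Next I would analyze a single relaxation. By Proposition~\ref{prop:stressed_hyps_paving} every hyperplane of a paving matroid is stressed, so by Proposition~\ref{prop:relaxation} each $H\in\mathcal{H}^{\geq r}(M)$ can be relaxed, and $\Rel_H(M)$ has basis system $\B\cup\binom{H}{r}$; thus its non-bases are exactly those of $M$ with the block $\binom{H}{r}$ deleted. The key lemma is that $\Rel_H(M)$ is again paving and that the other hyperplanes of $M$ survive: $\Rel_H(M)$ has rank $r$ and all $(r-1)$-subsets independent, so it is paving; for any $H'\in\mathcal{H}^{\geq r}(M)$ with $H'\neq H$, each $r$-subset of $H'$ is still a non-basis of $\Rel_H(M)$ (it was a non-basis of $M$, being contained in $H'$, and it is not contained in $H$ because $|H\cap H'|\leq r-1$), so $\rank_{\Rel_H(M)}(H')=r-1$; and $H'$ is a flat of $\Rel_H(M)$, since if some $e\notin H'$ satisfied $\rank_{\Rel_H(M)}(H'\cup\{e\})=r-1$, then every $r$-subset of $H'\cup\{e\}$ would be a non-basis of $\Rel_H(M)$, hence of $M$, forcing $\rank_M(H'\cup\{e\})=r-1=\rank_M(H')$ and so $e\in\overline{H'}=H'$ in $M$, a contradiction. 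Therefore $H'$ remains a (stressed) hyperplane of size $\geq r$ in $\Rel_H(M)$.

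Finally I would iterate. Enumerate $\mathcal{H}^{\geq r}(M)=\{H_1,\dots,H_t\}$ in any order and relax them successively; by the previous paragraph applied inductively, after relaxing $H_1,\dots,H_i$ the matroid is paving, its non-bases are $\bigsqcup_{j>i}\binom{H_j}{r}$, and $H_{i+1}$ is still a stressed hyperplane, so the next relaxation is legitimate. After all $t$ steps the non-basis set is empty, so the basis system is $\binom{[n]}{r}$ and the matroid is $U_{r,n}$, visibly independent of the chosen order. Hyperplanes of $M$ of size $r-1$ (if any) contain no $r$-subset, so relaxing them does nothing; hence relaxing each hyperplane of $M$, which are all stressed by Proposition~\ref{prop:stressed_hyps_paving}, yields exactly $U_{r,n}$. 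The only point requiring care is the inductive claim that each successive hyperplane remains a hyperplane of the partially relaxed matroid; this is precisely the flat-closure argument of the second paragraph, whose crux is that distinct hyperplanes of a paving matroid are pairwise $r$-disjoint, so relaxing one does not disturb the $r$-subsets of another.
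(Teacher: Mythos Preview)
Your argument is correct. The paper does not give its own proof of this lemma; it simply cites it as Corollary~3.17 of \cite{FerroniRelaxation} and remarks that it follows from Propositions~\ref{prop:stressed_hyps_paving} and~\ref{prop:relaxation}. Your write-up supplies exactly the details that make that implication precise: the partition of the non-bases of a paving matroid into the families $\binom{H}{r}$ over large hyperplanes, the verification that a single relaxation preserves the paving property and leaves the remaining large hyperplanes intact as stressed hyperplanes, and the induction that exhausts all non-bases. There is nothing to compare against in the paper itself; your proof is a faithful expansion of the sketch the paper alludes to.
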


In \cite{hanelyetal}, it was shown that $\psi_{r,s,n}(t)$ has nonnegative coefficients if the expression given by 
\begin{equation}\label{eq:upperExpression}
\sum_{i=0}^q(-1)^i\binom{s}{i} \Pi^{s-\ell-m}_{-i,s-\ell-2-i} \Pi^{\ell-(k-m)}_{s-\ell-i,s-1-i} \binom{k-1+q-i}{k-1}
\end{equation}
is nonnegative for all $k\geq 1$, $1\leq m\leq k$, $q\geq 0$ and $0\leq\ell\leq s-1$.
Thus, we have the following.

\begin{observation}\label{obs}
If for all integers $k\geq 1$, $1\leq m\leq k$, $q\geq 0$ and $0\leq\ell\leq s-1$, the expression
\[
\sum_{i=0}^q(-1)^i\binom{s}{i} \Pi^{s-\ell-m}_{-i,s-\ell-2-i} \Pi^{\ell-(k-m)}_{s-\ell-i,s-1-i} \binom{k-1+q-i}{k-1}
\]
is nonnegative, then for any paving matroid $M$ with rank $r$ on ground set $[n]$, we have that
\[
\ehr_M(t) \preceq \ehr_{U_{r,n}}(t).
\]
\end{observation}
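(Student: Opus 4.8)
The plan is to assemble the pieces already recalled in this section into a telescoping argument along a sequence of stressed-hyperplane relaxations. First I would record the immediate consequence of the hypothesis: assuming the displayed expression is nonnegative for all admissible $s$ and all $k\geq 1$, $1\leq m\leq k$, $q\geq 0$, $0\leq \ell\leq s-1$, the reduction recalled from \cite{hanelyetal} (the sentence preceding this observation) yields that the polynomial $\psi_{r,s,n}(t)$ has nonnegative coefficients for every $r\leq s\leq n-1$.

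Next I would upgrade this into a single-step monotonicity statement. Let $N$ be any rank-$r$ matroid on $[n]$ and let $H$ be a stressed hyperplane of $N$ with $|H|=s$; since a hyperplane is a proper flat we have $s\leq n-1$, and we may assume $s\geq r$, because a hyperplane with $|H|=r-1$ has $\binom{H}{r}=\varnothing$ and its relaxation is trivial. Theorem \ref{thm:Ehrhart-paving-improved-formula} gives
\[
\ehr_{\Rel_H(N)}(t)-\ehr_{N}(t)=\frac{n-s}{(n-1)!}\binom{t-1+n-s}{n-s}\,\psi_{r,s,n}(t).
\]
The factor $\binom{t-1+n-s}{n-s}$ equals the product of the linear polynomials $t,t+1,\dots,t+n-s-1$ divided by $(n-s)!$, hence has nonnegative coefficients; $\psi_{r,s,n}(t)$ has nonnegative coefficients by the previous paragraph; and $n-s\geq 1$. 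Therefore the entire right-hand side has nonnegative coefficients, i.e.\ $\ehr_N(t)\preceq \ehr_{\Rel_H(N)}(t)$.

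Finally I would iterate. If $M$ is paving, Proposition \ref{prop:stressed_hyps_paving} says that every hyperplane of $M$ is stressed, and Lemma \ref{lem:RelPaving} says that relaxing the stressed hyperplanes of $M$ produces $U_{r,n}$. Performing these relaxations one hyperplane at a time gives a chain $M=M_0\rightsquigarrow M_1\rightsquigarrow\cdots\rightsquigarrow M_N=U_{r,n}$ with $M_{j+1}=\Rel_{H_j}(M_j)$ for a stressed hyperplane $H_j$ of $M_j$; applying the single-step inequality to each link and using transitivity of $\preceq$ yields $\ehr_M(t)\preceq \ehr_{U_{r,n}}(t)$, as desired. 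The one point that needs care — and the step I would expect to be the main obstacle — is justifying that this chain exists with each $H_j$ genuinely a \emph{stressed hyperplane of $M_j$}, so that Theorem \ref{thm:Ehrhart-paving-improved-formula} applies at every link; concretely one must check that relaxing one stressed hyperplane of a paving matroid leaves the remaining stressed hyperplanes as stressed hyperplanes of the new matroid, which is part of the structural picture of stressed-hyperplane relaxations in \cite{FerroniRelaxation} underlying Proposition \ref{prop:relaxation} and Lemma \ref{lem:RelPaving}. (The genuinely hard work of the section is proving the nonnegativity of the displayed expression itself, which is the hypothesis here and is established separately by an involution argument analogous to the proof of Conjecture \ref{conj:main}.)
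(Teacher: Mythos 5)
Your proposal is correct and follows essentially the same route as the paper: deduce nonnegativity of $\psi_{r,s,n}(t)$ from the hypothesis via the reduction in \cite{hanelyetal}, apply Theorem \ref{thm:Ehrhart-paving-improved-formula} to get $\ehr_N(t)\preceq\ehr_{\Rel_H(N)}(t)$ at each stressed-hyperplane relaxation, and telescope to $U_{r,n}$ using Proposition \ref{prop:stressed_hyps_paving} and Lemma \ref{lem:RelPaving}. The point you flag as the main obstacle (that the chain of relaxations can be carried out with each hyperplane still stressed at its step) is exactly what the paper delegates to Lemma \ref{lem:RelPaving}, i.e.\ Corollary 3.17 of \cite{FerroniRelaxation}, so your argument matches the paper's.
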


In what follows, we state and prove Theorem \ref{thm:Uppermain}, affirming that expression (\ref{eq:upperExpression}) is nonnegative. 
Specifically, we give a combinatorial interpretation for this expression similar to that given in Conjecture \ref{conj:main}. 
{Considering that} the above expression is so similar to the expression in Conjecture \ref{conj:main}, many of the arguments are analogous.

Another consequence of Theorem \ref{thm:Uppermain} is the following, which confirms Conjecture 6.2 in \cite{hanelyetal}.

\begin{corollary}[Conjecture 6.2 in \cite{hanelyetal}]\label{relaxation cor}
Let $M$ be a matroid with rank $r$ and let $H$ be a stressed hyperplane with $|H|\geq r$. 
If $M$ is Ehrhart positive, then so is $\Rel_H(M)$.
\end{corollary}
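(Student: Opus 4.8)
The plan is to deduce this statement directly from Theorem \ref{thm:Uppermain} together with Theorem \ref{thm:Ehrhart-paving-improved-formula}. Recall that Theorem \ref{thm:Ehrhart-paving-improved-formula} gives
\[
\ehr_{\Rel_H(M)}(t) = \ehr_M(t) + \frac{n-s}{(n-1)!}\binom{t-1+n-s}{n-s}\,\psi_{r,s,n}(t),
\]
where $s = |H| \geq r$ and $n = |E(M)|$. So the difference $\ehr_{\Rel_H(M)}(t) - \ehr_M(t)$ is a scalar multiple (by the positive constant $\tfrac{n-s}{(n-1)!}$) of the product $\binom{t-1+n-s}{n-s}\,\psi_{r,s,n}(t)$. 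The binomial factor $\binom{t-1+n-s}{n-s} = \tfrac{1}{(n-s)!}(t-1+n-s)(t-2+n-s)\cdots t$ is a polynomial in $t$ with nonnegative coefficients, since $n-s \geq 1$ and all the roots $0,-1,\dots,-(n-s-1)$ are nonpositive integers; hence a product of it with any coefficient-wise nonnegative polynomial is again coefficient-wise nonnegative.

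The key input is that $\psi_{r,s,n}(t)$ has nonnegative coefficients. As recorded in the excerpt, the discussion following Theorem \ref{thm:Ehrhart-paving-improved-formula} shows that nonnegativity of $\psi_{r,s,n}(t)$ (for the relevant range $r \leq s \leq n-1$) reduces to nonnegativity of expression \eqref{eq:upperExpression} for all $k \geq 1$, $1 \leq m \leq k$, $q \geq 0$ and $0 \leq \ell \leq s-1$, and this is precisely the content of Theorem \ref{thm:Uppermain}. (If $s = n$ is possible, one checks the boundary case directly; but a stressed hyperplane of a rank-$r$ matroid on $[n]$ has size at most $n-1$, so this does not arise.) Consequently $\binom{t-1+n-s}{n-s}\,\psi_{r,s,n}(t) \succeq 0$, and therefore $\ehr_M(t) \preceq \ehr_{\Rel_H(M)}(t)$ for every matroid $M$ of rank $r$ and every stressed hyperplane $H$ with $|H| \geq r$.

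Finally, if $M$ is Ehrhart positive, i.e. $\ehr_M(t) \succeq 0$, then the coefficient-wise inequality just established gives $\ehr_{\Rel_H(M)}(t) \succeq \ehr_M(t) \succeq 0$, so $\Rel_H(M)$ is Ehrhart positive as well. This completes the proof. I do not anticipate a genuine obstacle here beyond bookkeeping: the entire weight of the argument rests on Theorem \ref{thm:Uppermain}, whose combinatorial proof mirrors that of Conjecture \ref{conj:main}; the only point requiring a word of care is confirming that the binomial prefactor is coefficient-wise nonnegative and that the hyperplane size genuinely satisfies $r \leq s \leq n-1$, so that Theorem \ref{thm:Ehrhart-paving-improved-formula} and Theorem \ref{thm:Uppermain} apply verbatim.
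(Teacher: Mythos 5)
Your proposal is correct and follows essentially the same route as the paper: Theorem \ref{thm:Uppermain} (via the reduction recorded in Observation \ref{obs}) gives nonnegativity of $\psi_{r,s,n}(t)$, and then the formula of Theorem \ref{thm:Ehrhart-paving-improved-formula}, together with the coefficient-wise nonnegativity of the binomial prefactor, yields $\ehr_M(t)\preceq \ehr_{\Rel_H(M)}(t)$ and hence the corollary. Your extra check that $r\leq s\leq n-1$ always holds for a stressed hyperplane is a harmless added detail the paper leaves implicit.
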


\begin{proof}
Theorem \ref{thm:Uppermain} and Observation \ref{obs} {imply} that $\psi_{r,s,n}(t)$ has positive coefficients for all $r\leq s\leq n-1$. 
{It then follows by Theorem \ref{thm:Ehrhart-paving-improved-formula} that} $\Rel_H(M)(t)$ has positive coefficients.
\end{proof}

In the following definition, we introduce the combinatorial objects {that} expression  (\ref{eq:upperExpression}) enumerates. 
These objects may seem unnatural compared to the naturally ordered chain forests from Conjecture \ref{conj:main}, however, these are precisely the objects that are relevant for the statement of Theorem \ref{thm:UpperBound}. 
We note that, unlike Conjecture \ref{conj:main}, the authors of \cite{hanelyetal} did not conjecture a combinatorial interpretation for expression \eqref{eq:upperExpression}. 
Rather, we are able to deduce Theorem \ref{thm:UpperBound} by applying the reasoning from the proof of Conjecture \ref{conj:main} to expression \eqref{eq:upperExpression}.

\begin{definition}\label{def:UpperInterpretation}
    Let $\CF^1(q,s,k,\ell,m)$ be the number of ordered chain forests $\F$ of $[s]$ with $k$ blocks, and $\gamma(\F,\ell) = m\geq 2$ with the following properties. 
    Writing $\F = \mathsf{B}_1\cdots \mathsf{B}_{k-1}\mathsf{C}_1$, we have{:}
    \begin{enumerate}
        \item $1\in \mathsf{C}_1$ and 1 is the leader of $\mathsf{C}_1$,
        \item $\sum_{i=1}^{k-1}\wt(\mathsf{B}_i) + |\mathsf{C}_1| = q$,
        \item The blocks $\mathsf{B}_1,\dots,\mathsf{B}_{k-1}$ are naturally ordered,
        \item The leader of the $(k-m+2)^{th}$ block is in the $(\ell+2)^{th}$ position.
    \end{enumerate}
\end{definition}
\noindent We note that the reason we require $m\geq 2$ is due to the fact that if the leader of a block is in the $(\ell+2)^{th}$ position, then some block ends at the $(\ell+1)^{th}$ position. 
Hence, there are at least 2 blocks that end after the $\ell^{th}$ position.

We prove the following theorem, which shows that the expression in question enumerates the objects {in} Definition \ref{def:UpperInterpretation}, thus proving Ferroni's Ehrhart-coefficient upper-bound conjecture for paving matroids.

\begin{theorem}\label{thm:Uppermain}

Let $1\leq m\leq k \leq s$, $q\geq 0$, and $0\leq \ell \leq s-1$ be integers. Then
\[
|\CF^1(q+1,s+1,k+1,\ell,m+1)| = \sum_{i=0}^q(-1)^i\binom{s}{i} \Pi^{s-\ell-m}_{-i,s-\ell-2-i} \Pi^{\ell-(k-m)}_{s-\ell-i,s-1-i} \binom{k-1+q-i}{k-1}.
\]
    
\end{theorem}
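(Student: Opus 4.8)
The plan is to mirror the structure of the proof of Conjecture \ref{conj:main} as closely as possible, adapting each ingredient to the slightly shifted parameters appearing in expression \eqref{eq:upperExpression}. First I would establish the analogue of Proposition \ref{prop:i-interpretation}: each summand $(-1)^i\binom{s}{i} \Pi^{s-\ell-m}_{-i,s-\ell-2-i} \Pi^{\ell-(k-m)}_{s-\ell-i,s-1-i} \binom{k-1+q-i}{k-1}$ should be re-indexed so that it counts (with sign $(-1)^{|B_A(\F)|}$) valued $A$-distinguished ordered chain forests of $[s+1]$ in which the element $1$ is forced into the last (distinguished) block. Concretely, the shift of the $\Pi$-ranges from $[-i+1,s-\ell-1-i]$ to $[-i,s-\ell-2-i]$ corresponds to using the sequence $(a_0,\dots,a_s)$ built from the larger ground set $[s+1]$ with the convention that the element $1$ always occupies the final position; the same bracket-placement argument as in the Claim inside Proposition \ref{prop:i-interpretation} then produces the right count and the right sign, and the $\binom{k-1+q-i}{k-1}$ factor again records the assignment of block-values summing to $q-i$. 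I would state this as a Proposition parallel to \ref{prop:i-interpretation}.

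Next I would invoke the map $\phi$ and Propositions \ref{prop:phi-bijection} and \ref{prop:phi-image} essentially verbatim — the algorithm and all four preparatory lemmas (\ref{lem:BlockOrder}--\ref{lem:ReverseIteration}) only use properties of the non-distinguished part, so they carry over without change to the new ground set and the new "$1$ is a distinguished leader" constraint. The key point is to identify which elements of $\phi(DCF(\cdot))$ correspond bijectively to $\CF^1(q+1,s+1,k+1,\ell,m+1)$: these should be exactly the images $\phi((\F,v,\emptyset'))$ — here "$\emptyset'$" means $A=\{1\}$, the minimal allowed distinguished set — in which every block other than $\mathsf{C}_1=\{1,\dots\}$ has been fully converted to weight (i.e. $j=0$ among the $\mathsf{B}_i$'s in the sense of Proposition \ref{prop:phi-image}, so no non-distinguished block consists only of processed elements). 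Conditions (1)--(4) of Definition \ref{def:UpperInterpretation} are precisely the translation of conditions (1)--(5) of Proposition \ref{prop:phi-image} in this case: condition (2) of the Definition matches item (4) of the Proposition once $|A|=1$ and there is a single distinguished block $\mathsf{C}_1$ of weight $0$; condition (4) matches item (5), the $\gamma(\F,\ell)=m+1$ bookkeeping, after the index shift; and the position "$\ell+2$" versus "$\ell$" and the requirement $m\ge 2$ are exactly the off-by-one coming from replacing $s$ by $s+1$ and forcing $1$ into the last block (this is the content of the remark following Definition \ref{def:UpperInterpretation}).

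The heart of the argument is then the sign-reversing involution, and here I would reuse the bijection $f:\mathscr N\to\mathscr P$ from the proof of Conjecture \ref{conj:main} with one modification: the distinguished part of every object now contains the fixed block $\mathsf{C}_1\ni 1$, which is never removed or merged; $f$ acts by adding or deleting a \emph{second} distinguished block $\mathsf{C}_2$ (or, in case (2), promoting $\mathsf{B}_r'$ to a distinguished block placed among the $\mathsf{C}$'s). One checks that $f$ is well-defined and injective by the same leader-comparison argument — the decision between "remove" and "add" is governed by whether the leader of the topmost removable distinguished block exceeds the leader of $\mathsf{B}_r'$ — and that the non-cancelling survivors are exactly the elements of $S$ identified in the previous paragraph, i.e. $\CF^1(q+1,s+1,k+1,\ell,m+1)$. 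Finally, to upgrade injectivity of $f$ to a bijection I would run the same summation-over-$m$ trick: summing expression \eqref{eq:upperExpression} over $m$ telescopes the product $\Pi^{s-\ell-m}_{-i,s-\ell-2-i}\Pi^{\ell-(k-m)}_{s-\ell-i,s-1-i}$ down to $\Pi^{s-k}_{-i,s-1-i}$, which is (after the index shift) the right-hand side of a version of Theorem \ref{lah theorem}, forcing the defect terms $p_m$ to sum to zero and hence to vanish individually.

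\textbf{Main obstacle.} The delicate step is the bookkeeping in the analogue of Proposition \ref{prop:i-interpretation}: keeping track of precisely how forcing $1$ into the last block, together with the unit downward shift of both $\Pi$-ranges and the replacement $s\mapsto s+1$, $q\mapsto q+1$, $k\mapsto k+1$, $m\mapsto m+1$, aligns on the nose — in particular verifying that the sign $(-1)^{i-B_A}$ is still correct when the last $i+1$ positions (not $i$) are filled with elements of $A\cup\{1\}$, and that the "$\ell+2$" offset in Definition \ref{def:UpperInterpretation}(4) is exactly what the bracket-counting argument yields. Once that correspondence is pinned down, the involution $f$ and the telescoping are routine adaptations of Section \ref{sec:main_proof}, and I would present them compressed, citing the earlier arguments for the repeated parts.
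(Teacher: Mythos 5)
Your overall architecture (an analogue of Proposition \ref{prop:i-interpretation}, the map $\phi$, then a sign-reversing map $f$) is the paper's route, but there is a genuine gap in the step you call "the key point": the identification of the non-cancelling survivors. You take the survivors to be the images $\phi((\F,v,A))$ with $A=\{1\}$ only. With $A=\{1\}$ the unique distinguished block is forced to be the singleton $[1]$, since distinguished blocks consist entirely of elements of $A$ and $\phi$ never moves elements into or out of the distinguished part. But in $\CF^1(q+1,s+1,k+1,\ell,m+1)$ the last block $\mathsf{C}_1$ may have any cardinality, and it is precisely $|\mathsf{C}_1|$ (not $\wt(\mathsf{C}_1)$) that enters condition (2) of Definition \ref{def:UpperInterpretation}. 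The correct surviving set consists of those $\phi((\F,v,A))$ in which \emph{all} of $A$ (of any size $i+1$, containing $1$) lies in the single distinguished block, so that $|A|=|\mathsf{C}_1|$ supplies the $|\mathsf{C}_1|$ term; these survivors occur in every summand $i$, unlike in the proof of Conjecture \ref{conj:main}, where they all sit at $i=0$. A concrete check: take $s=2$, $k=m=1$, $\ell=0$, $q=1$. The right-hand side equals $\sum_{i=0}^{1}(-1)^i\binom{2}{i}(-i)=2$, and $\CF^1(2,3,2,0,2)=\{[2][1,3],\,[3][1,2]\}$, both elements having $|\mathsf{C}_1|=2$; they arise from $A$ of size $2$ (the $i=1$ term), while your prescription ($A=\{1\}$, hence $\mathsf{C}_1=[1]$) produces no survivors at all. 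With your fixed-point set the proposed bijection $f:\mathscr N\to\mathscr P\setminus\phi^{-1}(S)$ cannot exist, since it would force cancellation of objects that must survive, and the identity would fail.

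A secondary problem is your surjectivity upgrade via summation over $m$. In the proof of Conjecture \ref{conj:main} that trick works because the two ranges $[-i+1,\,s-\ell-1-i]$ and $[s-\ell-i,\,s-1-i]$ are adjacent, so $\sum_m \Pi^{s-\ell-m}_{-i+1,s-\ell-1-i}\Pi^{\ell-(k-m)}_{s-\ell-i,s-1-i}=\Pi^{s-k}_{-i+1,s-1-i}$ and Theorem \ref{lah theorem} is available for comparison. Here the ranges $[-i,\,s-\ell-2-i]$ and $[s-\ell-i,\,s-1-i]$ skip the value $s-\ell-1-i$, so the sum over $m$ does \emph{not} collapse to $\Pi^{s-k}_{-i,s-1-i}$, and there is no pre-established closed-form count (no analogue of Theorem \ref{lah theorem}) for $\sum_m|\CF^1(\cdot)|$ to compare against. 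The paper avoids this by checking surjectivity of $f$ directly (it is straightforward once the correct survivor set $S$ is in place), and once you repair the identification of $S$ you should do the same rather than rely on the telescoping.
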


We now outline a proof of Theorem \ref{thm:Uppermain}. 
The proofs of the statements in this section are analogous to those in Sections \ref{sec:prelim} and \ref{sec:phi}. 
Therefore, we frequently omit the details and reference analogous results in previous sections.

\begin{definition}
Let $1\leq m\leq k \leq s$, $q\geq 0$, and $0\leq \ell \leq s-1$ be integers.
We define $DCF^1(q,s,k,\ell,m)$ to be the set of valued $A$-distinguished ordered chain forests $(\F,v,A) \in DCF(q,s,k,\ell,m)$ that have the following additional properties:
    \begin{enumerate}
        \item $1\in A$,
        \item The leader of the $(k-m+2)^{th}$ block is in the $(\ell+2)^{th}$ position,
        \item For the block $\mathsf{B}$ containing $1$, $v(\mathsf{B}) = 0$.
    \end{enumerate}
\end{definition}

The following is analogous to Proposition \ref{prop:i-interpretation}.

\begin{proposition}\label{prop:upper-i-interpretation}
Let $1\leq m\leq k \leq s$, $q\geq 0$, and $0\leq \ell \leq s-1$ be integers.
Let $T^1_i$ be the set of $i$-element subsets $A$ of $[s+1]$ such that $1\in A$, and let $Z=DCF^1(q+1,s+1,k+1,\ell,m+1)$. 
Then
    \begin{align*}
    &(-1)^i\binom{s}{i} \Pi^{s-\ell-m}_{-i,s-\ell-2-i} \Pi^{\ell-(k-m)}_{s-\ell-i,s-1-i} \binom{k-1+q-i}{k-1}
    = \sum_{A\in T^1_{i+1}} \sum_{(\F,v,A)\in Z}(-1)^{|B_A(\F)|-1}.
    \end{align*}
\end{proposition}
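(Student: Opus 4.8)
The plan is to mirror the argument of Proposition~\ref{prop:i-interpretation}, accounting for the shifted parameters and the distinguished role of the element $1$. First I would start from the left-hand side, fix an $i$-element subset $A$ of $[s+1]$ with $1\in A$ (so $A\in T^1_{i+1}$), and expand the product $\Pi^{s-\ell-m}_{-i,s-\ell-2-i}\,\Pi^{\ell-(k-m)}_{s-\ell-i,s-1-i}$ as a sum over products $i_1\cdots i_{s-\ell-m}\cdot j_1\cdots j_{\ell-(k-m)}$, where $-i\leq i_1<\cdots<i_{s-\ell-m}\leq s-\ell-2-i$ and $s-\ell-i\leq j_1<\cdots<j_{\ell-(k-m)}\leq s-1-i$. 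The key difference from Proposition~\ref{prop:i-interpretation} is that the index range for the ``$i$-part'' now runs from $-i$ (rather than $-i+1$) up to $s-\ell-2-i$ (rather than $s-\ell-1-i$): this corresponds exactly to passing from working over $[s]$ to working over $[s+1]$ but with the position of $1$ forced, so that one of the $k+1$ blocks (the block containing $1$) is pinned. I would then run the same bracket/sequence construction as in the Claim inside the proof of Proposition~\ref{prop:i-interpretation}: define $e_u = (s+1)-(i+1)-u = s-i-u$ for the appropriate range of $u$, set $a_0 = [\ $, $a_u = e_u$ if $e_u$ appears in the product and $a_u = [\ $ otherwise, and read off the block structure and the number of ways to fill positions.

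Next I would carry out the filling procedure in two stages exactly as before: the blocks with elements in $[s+1]\setminus A$ are built left-to-right, each non-bracket position $a_u$ contributing $|a_u|$ choices, and the blocks with elements in $A$ are built right-to-left, again with $|a_u|$ choices at each non-bracket position, the negative entries contributing the signs that turn the raw product into $(-1)^{i-B_A+\varepsilon}$ for the appropriate correction $\varepsilon$. Because $1\in A$ and $1$ is the smallest element of $[s+1]$, it is forced to be the leader of the last $A$-block constructed, i.e.\ the block $\mathsf{C}_1$ in the eventual image under $\phi$; this is precisely condition (1) of Definition~\ref{def:UpperInterpretation} and condition (1) of the definition of $DCF^1$. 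The count of blocks is $k+1$ (there are $k$ indices in the relevant range not appearing in the product), and the $\gamma$-bookkeeping shows $\gamma(\F,\ell) = m+1$: choosing $\ell-(k-m)$ of the $\ell$ elements in the ``$j$-range'' forces $k-m$ blocks to end on or before position $\ell$, hence $k-m+1$ block-leaders among the first $\ell+1$ positions, so the leader of the $(k-m+2)^{\text{th}}$ block sits in position $\ell+2$ — this is condition (2). The extra $(-1)^{-1}$ in the statement (the exponent $|B_A(\F)|-1$ rather than $|B_A(\F)|$) accounts for the one $A$-block, namely $\mathsf{C}_1$, that is mandatory rather than optional.

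Finally, the factor $\binom{k-1+q-i}{k-1}$ is interpreted as in Proposition~\ref{prop:i-interpretation}: it is the number of ways to assign nonnegative integer values summing to $q-i$ to the $k$ blocks \emph{other than} the block containing $1$ (that block being forced to have value $0$, which is condition (3) of the $DCF^1$ definition). Since $|A| = i+1$ and $(q-i) + (i+1) = q+1$, the total weight is $q+1$, placing the object in $DCF^1(q+1,s+1,k+1,\ell,m+1)$. Summing over all products contributing to the two $\Pi$-terms and then over all $A\in T^1_{i+1}$ yields the claimed identity. The main obstacle I anticipate is getting all the index shifts and the sign correction exactly right: the passage $s\mapsto s+1$, $k\mapsto k+1$, $m\mapsto m+1$, $q\mapsto q+1$ together with forcing $1\in A$ must be reconciled simultaneously with the altered $\Pi$-ranges $[-i,\,s-\ell-2-i]$ and the exponent $|B_A(\F)|-1$, and one has to check carefully that the forced block $\mathsf{C}_1$ is the \emph{first} block of the distinguished part (so that $\phi$ behaves compatibly) — this is the bookkeeping that the $e_u = s-i-u$ reindexing is designed to handle.
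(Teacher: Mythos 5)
Your proposal takes essentially the same route as the paper's proof, which likewise redoes the bracket-sequence construction from Proposition \ref{prop:i-interpretation} with ground set $[s+1]$, $|A|=i+1$ (so the product's sign is $(-1)^{(i+1)-|B_A(\F)|}$, giving the exponent $|B_A(\F)|-1$ after multiplying by $(-1)^i$), values assigned only to the $k$ blocks not containing $1$, and the forced block structure yielding $k+1$ blocks and $\gamma(\F,\ell)=m+1$. One small reattribution: the gap between the two $\Pi$-ranges (the value $s-\ell-1-i$ is never selected) is what forces a block to start at position $\ell+2$, i.e.\ condition (2) of $DCF^1$ and the step you need to conclude the $(k-m+2)^{\text{th}}$ leader sits exactly there; the block containing $1$ is pinned for a different reason, namely that $1=\min A$ makes it the leader of the rightmost block ($\mathsf{C}_p$ in the paper's notation), not because of the shifted range.
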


\begin{proof}
The proof is analogous to the proof of Proposition \ref{prop:i-interpretation}.
The quantity $\binom{s}{i}$ is the number of ways to choose an $i+1$-element subset $A$ of $[s+1]$ that contains 1.
The quantity $\Pi^{s-\ell-m}_{-i,s-\ell-2-i} \Pi^{\ell-(k-m)}_{s-\ell-i,s-1-i}$ is the sum of all products of the form $i_1\cdots i_{s-\ell-m} \cdot j_1\cdots j_{\ell-(k-m)}$ where
    \[
    -i\leq i_1<\cdots<i_{s-\ell-m}\leq s-\ell-2-i
    \]
    and
    \[
    s-\ell-i\leq j_1<\cdots<j_{\ell-(k-m)}\leq s-1-i.
    \]
In particular, among the numbers between $-i$ and $s-1-i$ (inclusive) there are precisely $k$ numbers that do not get selected to be in the product. 
As in the proof of Proposition \ref{prop:i-interpretation}, these $k$ numbers correspond to starting a block at a specific position. 
Since, in addition, the first position is always the beginning of some block, we have $k+1$ blocks in total. 
Additionally, $s-\ell-1-i$ never appears in such a product, and since this is the $(\ell+1)^{th}$ largest element among $-i,\dots,s-1-i$, this means we will always start a block at the $(\ell+2)^{th}$ position (again following the proof of Proposition \ref{prop:i-interpretation}).
Moreover, since there are $k-m$ numbers greater than $s-\ell - 1 -i$ that do not appear in the product, it follows that there are $k-m+1$ blocks (including the block starting at the first position) that come before the block starting at the $(\ell+2)^{th}$ position.
Hence, the block starting at the $(\ell+2)^{th}$ position is the $(k-m+2)^{th}$ block. 
    
The sign of such a product is given by $(-1)^{i+1 - B_A}$ where $B_A$ is the number of blocks that are filled with elements of $A$. 
Together with the $(-1)^i$ term, we have that the product $(-1)^ii_1\cdots i_{s-\ell-m} \cdot j_1\cdots j_{\ell-(k-m)}$ comes with the sign $(-1)^{|B_A|-1}$.

Finally, the quantity $\binom{k-1 +q - i}{k-1}$ is the number of ways to assign nonnegative values to the blocks that do not contain 1.
Thus, by a similar reasoning as in the proof of Proposition \ref{prop:i-interpretation}, we obtain the result.
\end{proof}

The following statements are very similar to Propositions \ref{prop:phi-bijection} and \ref{prop:phi-image}, and we omit the proofs.

\begin{proposition}\label{prop:phi-bijection-upper}
The map $\phi:DCF^1(q,s,k,\ell,m) \longrightarrow \phi(DCF^1(q,s,k,\ell,m))$ is a bijection.
\end{proposition}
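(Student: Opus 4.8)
The plan is to obtain this as a restriction of the bijection $\phi\colon DCF(q,s)\to\phi(DCF(q,s))$ established in Proposition~\ref{prop:phi-bijection}. By definition, $DCF^1(q,s,k,\ell,m)$ is the subset of $DCF(q,s,k,\ell,m)$ cut out by three additional conditions, so it suffices to check that each of these conditions is invariant under both $\phi$ and $\phi^{-1}$. Injectivity of the restriction is then inherited from the larger bijection, and its image is precisely the set of elements of $\phi(DCF(q,s,k,\ell,m))$ satisfying the same three conditions, i.e.\ $\phi(DCF^1(q,s,k,\ell,m))$.

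First I would recall that $\phi$ (and likewise $\phi^{-1}$) modifies only the non-distinguished part $(\F_1,v_1)$ of a valued $A$-distinguished ordered chain forest, leaving the distinguished blocks $\mathsf{C}_1\cdots\mathsf{C}_p$ and their values unchanged, and leaving the set $A$ itself unchanged. Condition~(1) requires $1\in A$, and condition~(3) constrains the value of the block containing $1$, which — since $1\in A$ — is one of the $\mathsf{C}_i$; both conditions therefore involve only data fixed by $\phi$ and $\phi^{-1}$. For condition~(2) I would invoke the observation made just before Proposition~\ref{prop:phi-bijection}, that the blocks of $(\F,v,A)$ and of $\phi((\F,v,A))$ have the same lengths in the same order. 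Hence the number of blocks and the position at which the $j^{\text{th}}$ block begins are preserved; in particular the $(k-m+2)^{\text{th}}$ block starts at the $(\ell+2)^{\text{th}}$ position before applying $\phi$ if and only if it does so afterward. Combined with the preservation of $k$ and of $\gamma(\,\cdot\,,\ell)=m$ already recorded in Proposition~\ref{prop:phi-bijection}, this finishes the argument.

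I do not expect a genuine obstacle here. The only point needing care is the (easy) observation that conditions~(1)--(3) are ``local'' to the distinguished part of the forest and to the coarse block-length data, so the element-reshuffling performed inside Algorithm~\ref{alg} cannot disturb them; once this is made precise, the proposition is a formal consequence of Proposition~\ref{prop:phi-bijection}.
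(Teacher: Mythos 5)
Your argument is correct and is essentially the paper's own (the paper omits the proof, noting only that it is analogous to Proposition~\ref{prop:phi-bijection}): since $DCF^1(q,s,k,\ell,m)\subseteq DCF(q,s)$ and the codomain is by definition the image, the statement reduces to injectivity of the restriction, which is inherited from Proposition~\ref{prop:phi-bijection} exactly as you say. Your additional checks that conditions (1)--(3) are preserved by $\phi$ and $\phi^{-1}$ go slightly beyond what the bare statement requires, but they are sound (the distinguished part, $A$, and the block-length data are untouched by Algorithm~\ref{alg}) and are precisely what is needed for the image description in Proposition~\ref{prop:phi0-image-upper}.
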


\begin{proposition}\label{prop:phi0-image-upper}
An element $(\F,v,A)$ is contained in $\phi(DCF^1(q,s,k,\ell,m)$ if and only if $(\F,v)= \mathsf{B}_1^{\beta_1}\cdots \mathsf{B}_r^{\beta_r} \mathsf{C}_1^{\gamma_1}\cdots \text{{$\mathsf{C}_{p-1}^{\gamma_{p-1}}$}}\mathsf{C}_p^0$ has the following properties: 
For some integer $0\leq j\leq r$,

\begin{enumerate}

\item The blocks $\mathsf{C}_1,\dots, \mathsf{C}_p$ consist only of elements in $A$, and the remaining blocks do not contain elements in $A$. 
The blocks $\mathsf{C}_1,\dots, \mathsf{C}_p$ are ordered decreasingly according to their leaders, and the leader of each block $\mathsf{C}_i$ is the smallest element in the block.
    
\item $1\in \mathsf{C}_p$.

\item The $j$ blocks $\mathsf{B}_{r-j+1},\dots, \mathsf{B}_r$ are naturally ordered, and the leader of each of these blocks is the smallest element in that block. 

\item The blocks $\mathsf{B}_1,\dots,\mathsf{B}_{r-j}$ are naturally ordered, and $\beta_1=\dots=\beta_{r-j} = 0$.

\item $\sum_{i=1}^{r-j} \wt(\mathsf{B}_i) + \sum_{i=j-r+1}^r (|\mathsf{B}_i| + \beta_i) + |A| + \sum_{i=1}^{p-1}\gamma_i = q$. 

\item $\F$ has $k$ blocks and $\gamma(\F,\ell) = m$.

\item The leader of the $(k-m+2)^{th}$ block is in the $(\ell+2)^{th}$ position.

\end{enumerate}
\end{proposition}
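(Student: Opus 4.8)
The statement is the $DCF^1$-analogue of Proposition~\ref{prop:phi-image}, and the plan is to derive it from (the proof of) that proposition plus a short accounting of the three extra conditions that cut $DCF^1$ out of $DCF$. Since $DCF^1(q,s,k,\ell,m)\subseteq DCF(q,s,k,\ell,m)$ and $\phi$ is the same map, the argument of Proposition~\ref{prop:phi-image} (run through $\phi^{-1}$, Algorithm~\ref{alg}, and Lemmas~\ref{lem:BlockOrder}--\ref{lem:ReverseIteration}) already produces, for every $(\F,v,A)\in\phi(DCF^1(q,s,k,\ell,m))$, the counterparts of conditions (1), (3)--(6) here. What remains is to translate the three properties distinguishing $DCF^1$ from $DCF$, namely (a) $1\in A$; (b) the block containing $1$ has value $0$; (c) the leader of the $(k-m+2)$th block sits in the $(\ell+2)$th position.

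The key point is that Algorithm~\ref{alg} acts only on the non-distinguished part and, even there, only cyclically permutes the elements occupying a suffix of positions; it never changes which positions begin a block, the order of the blocks, the set $A$, or the blocks $\mathsf{C}_i$. Hence $\phi$ fixes $A$, so (a) transfers verbatim and gives part of condition (2); and $\phi$ preserves the position of the leader of each block, so (c) transfers verbatim and gives condition (7). Moreover, since $1=\min[s]$ and every distinguished block has its leader equal to its own minimum (with the $\mathsf{C}_i$ listed in decreasing order of leaders), the distinguished block containing $1$ must be $\mathsf{C}_p$; thus $1\in\mathsf{C}_p$, completing condition (2), and the $DCF^1$-requirement that the block of $1$ has value $0$ becomes $v(\mathsf{C}_p)=0$. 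That is exactly why the image is written $\mathsf{B}_1^{\beta_1}\cdots\mathsf{B}_r^{\beta_r}\mathsf{C}_1^{\gamma_1}\cdots\mathsf{C}_{p-1}^{\gamma_{p-1}}\mathsf{C}_p^{0}$ and why the sum in condition (5) runs only to $p-1$: once $\gamma_p=0$, it is exactly item (4) of Proposition~\ref{prop:phi-image}.

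For the converse I would, given $(\F,v)$ obeying (1)--(7), run the reverse procedure of Lemma~\ref{lem:ReverseIteration} on the non-distinguished part exactly as in Proposition~\ref{prop:phi-image}, obtaining $(\mathcal{G},v,A)\in DCF(q,s,k,\ell,m)$ with $\phi((\mathcal{G},v,A))=(\F,v,A)$, and then check that $(\mathcal{G},v,A)$ actually lies in $DCF^1(q,s,k,\ell,m)$. This is immediate, because that reverse procedure leaves the distinguished part untouched (so $1\in A$ and $v$ vanishes on the block of $1$) and leaves all block sizes and positions untouched (so the $(k-m+2)$th-block condition survives). The only step that demands real attention --- and the one ingredient not already in the proof of Proposition~\ref{prop:phi-image} --- is the identification of the block containing $1$ with the last distinguished block $\mathsf{C}_p$, together with the bookkeeping of the value $0$ there; everything else is a transcription of that earlier proof, which is why the authors omit the details.
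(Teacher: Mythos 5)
Your proposal is correct and matches the paper's intent: the paper omits this proof precisely because it is the argument of Propositions~\ref{prop:phi-bijection} and~\ref{prop:phi-image} rerun verbatim, and your additions — that $\phi$ and its inverse fix $A$, the distinguished blocks, and all block positions, so that $1\in A$ forces $1\in\mathsf{C}_p$ with $\gamma_p=0$ and the $(k-m+2)$th-block condition is preserved — are exactly the bookkeeping needed to transfer the three extra $DCF^1$ conditions. No gaps; this is the same route the authors had in mind.
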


We now outline the proof of Theorem \ref{thm:Uppermain}.
Again, the proof is similar to the proof of Conjecture \ref{conj:main}, so we omit most of the details.

\begin{proof}[Proof of Theorem \ref{thm:Uppermain}]
Let $Z=DCF^1(q+1,s+1,k+1,\ell,m+1)$. Let $S$ be the set of elements $\phi((\F,v,A)) = \mathsf{B}_1^{\beta_1}\cdots \mathsf{B}_k^{\beta_k} \mathsf{C}_1^0$ for some $(\F,v,A) \in DCF^1(q+1,s+1,k+1,\ell,m+1)$ where all the elements in $A$ are contained in $\mathsf{C}_1$ and $\sum_{i=1}^k\wt(\mathsf{B}_i) + |A| = q+1$.
By Proposition \ref{prop:phi-bijection-upper}, $\beta_i = 0$ for all $i$, so we may essentially identify $S$ with $\CF^1(q+1,s+1,k+1,\ell,m+1)$.
Notice that each element $(\F,v,A)$ with $\phi((\F,v,A))\in S$ contributes a ``$+1$'' to the sum
\[
\sum_{A\in T^1_{i+1}} \sum_{(\F,v,A)\in Z}(-1)^{|B_A(\F)|-1}
\]
appearing in Proposition \ref{prop:upper-i-interpretation}.

Therefore, denoting $\mathscr{P}$ to be the set of elements $(\F,v,A)\in Z$ such that $(-1)^{|B_A(\F)|-1}=1$, and $\mathscr{N}$ to be the set of elements $(\F,v,A)\in Z$ such that $(-1)^{|B_A(\F)|-1}=-1$, it suffices to find a bijection $f:\mathscr{N}\rightarrow \mathscr{P}\setminus \phi^{-1}(S)$. 
We may define $f$ in the exact same way as in the proof of Conjecture \ref{conj:main}. 
The fact that $f$ is an injection follows in the same way as in the proof of Conjecture \ref{conj:main}, and similar to the proof of Conjecture \ref{conj:main}, it is not hard to show that it is also a surjection.

This completes the proof outline.
\end{proof}


\bibliography{bibliography}

\newcommand{\etalchar}[1]{$^{#1}$}
\providecommand{\bysame}{\leavevmode\hbox to3em{\hrulefill}\thinspace}
\providecommand{\MR}{\relax\ifhmode\unskip\space\fi MR }
\providecommand{\MRhref}[2]{%
  \href{http://www.ams.org/mathscinet-getitem?mr=#1}{#2}
}
\providecommand{\href}[2]{#2}
\begin{thebibliography}{KMSRA18}

\bibitem[ABD10]{ArdilaBenedettiDoker}
Federico Ardila, Carolina Benedetti, and Jeffrey Doker, \emph{Matroid polytopes
  and their volumes}, Discrete Comput. Geom. \textbf{43} (2010), no.~4,
  841--854.

\bibitem[Ash20]{Ashraf}
Ahmed~Umer Ashraf, \emph{Another approach to volume of matroid polytopes},
  March 2020, p.~arXiv:1812.09373.

\bibitem[Bar06]{Barvinok}
Alexander Barvinok, \emph{Computing the {E}hrhart quasi-polynomial of a
  rational simplex}, Math. Comp. \textbf{75} (2006), no.~255, 1449--1466.

\bibitem[Bec99]{Beck}
Matthias Beck, \emph{The reciprocity law for {D}edekind sums via the constant
  {E}hrhart coefficient}, Amer. Math. Monthly \textbf{106} (1999), no.~5,
  459--462.

\bibitem[BR15]{beck-robins}
Matthias Beck and Sinai Robins, \emph{Computing the continuous discretely},
  second ed., Undergraduate Texts in Mathematics, Springer, New York, 2015,
  Integer-point enumeration in polyhedra, With illustrations by David Austin.

\bibitem[Bru23]{Bruns23}
Winfried Bruns, \emph{Polytope volume in {N}ormaliz}, S\~{a}o Paulo J. Math.
  Sci. \textbf{17} (2023), no.~1, 36--54.

\bibitem[BS18]{beck-sanyal}
Matthias Beck and Raman Sanyal, \emph{Combinatorial reciprocity theorems},
  Graduate Studies in Mathematics, vol. 195, American Mathematical Society,
  Providence, RI, 2018, An invitation to enumerative geometric combinatorics.

\bibitem[BV08]{BaldoniVergne}
Welleda Baldoni and Mich{\`e}le Vergne, \emph{Kostant partitions functions and
  flow polytopes}, Transform. Groups \textbf{13} (2008), no.~3-4, 447--469.

\bibitem[CLS11]{CoxLittleSchenck}
David~A. Cox, John~B. Little, and Henry~K. Schenck, \emph{Toric varieties},
  Graduate Studies in Mathematics, vol. 124, American Mathematical Society,
  Providence, RI, 2011.

\bibitem[Din71]{DinoltMinimalMoatroids}
George~W. Dinolt, \emph{An extremal problem for non-separable matroids},
  Th\'{e}orie des matro\"{\i}des ({R}encontre {F}ranco-{B}ritannique, {B}rest,
  1970), Lecture Notes in Math., vol. Vol. 211, Springer, Berlin-New York,
  1971, pp.~31--49.

\bibitem[DLHK09]{DeLoeraEhrhart2009}
Jes\'{u}s~A. De~Loera, David~C. Haws, and Matthias K\"{o}ppe, \emph{Ehrhart
  polynomials of matroid polytopes and polymatroids}, Discrete Comput. Geom.
  \textbf{42} (2009), no.~4, 670--702.

\bibitem[Ehr62]{Ehrhart}
Eug\`ene Ehrhart, \emph{Sur les poly\`edres rationnels homoth\'{e}tiques \`a
  {$n$} dimensions}, C. R. Acad. Sci. Paris \textbf{254} (1962), 616--618.

\bibitem[Fer21]{ferroniPositive}
Luis Ferroni, \emph{Hypersimplices are {E}hrhart positive}, J. Combin. Theory
  Ser. A \textbf{178} (2021), Paper No. 105365, 13.

\bibitem[Fer22a]{FerroniMatroids2022}
\bysame, \emph{Matroids are not {E}hrhart positive}, Adv. Math. \textbf{402}
  (2022), Paper No. 108337, 27.

\bibitem[Fer22b]{FerroniMinimal}
\bysame, \emph{On the {E}hrhart polynomial of minimal matroids}, Discrete
  Comput. Geom. \textbf{68} (2022), no.~1, 255--273.

\bibitem[FJS22]{fjs}
Luis Ferroni, Katharina Jochemko, and Benjamin Schr\"{o}ter, \emph{Ehrhart
  polynomials of rank two matroids}, Adv. in Appl. Math. \textbf{141} (2022),
  Paper No. 102410, 26.

\bibitem[FL23]{fan-li}
Neil~JY Fan and Yao Li, \emph{On the {E}hrhart polynomial of schubert
  matroids}, Discrete Comput. Geom. (2023), 1--40.

\bibitem[FM22]{ferroni2023lattice}
Luis Ferroni and Daniel McGinnis, \emph{{Lattice points in slices of prisms}},
  arXiv e-prints (2022), arXiv:2202.11808.

\bibitem[FNV22]{FerroniRelaxation}
Luis Ferroni, George Nasr, and Lorenzo Vecchi, \emph{Stressed hyperplanes and
  {K}azhdan–{L}usztig gamma-positivity for matroids}, International
  Mathematics Research Notices (2022), 1--60.

\bibitem[FS22]{ferroni2022valuative}
Luis Ferroni and Benjamin Schröter, \emph{{Valuative invariants for large
  classes of matroids}}, arXiv e-prints (2022), arXiv:2208.04893.

\bibitem[Ful93]{Fulton}
William Fulton, \emph{Introduction to toric varieties}, Annals of Mathematics
  Studies, vol. 131, Princeton University Press, Princeton, NJ, 1993, The
  William H. Roever Lectures in Geometry.

\bibitem[HMM{\etalchar{+}}23]{hanelyetal}
Derek {Hanely}, Jeremy~L. {Martin}, Daniel {McGinnis}, Dane {Miyata}, George~D.
  {Nasr}, Andr{\'e}s~R. {Vindas-Mel{\'e}ndez}, and Mei {Yin}, \emph{Ehrhart
  theory of paving and panhandle matroids}, Adv. Geom. \textbf{23} (2023),
  no.~4, 501--526.

\bibitem[JR22]{jochemko-ravichandran}
Katharina Jochemko and Mohan Ravichandran, \emph{Generalized permutahedra:
  {M}inkowski linear functionals and {E}hrhart positivity}, Mathematika
  \textbf{68} (2022), no.~1, 217--236.

\bibitem[KMSRA18]{Knauer+}
Kolja Knauer, Leonardo Mart\'{\i}nez-Sandoval, and Jorge~Luis
  Ram\'{\i}rez~Alfons\'{\i}n, \emph{On lattice path matroid polytopes: integer
  points and {E}hrhart polynomial}, Discrete Comput. Geom. \textbf{60} (2018),
  no.~3, 698--719.

\bibitem[Liu19]{liu}
Fu~Liu, \emph{On positivity of {E}hrhart polynomials}, Recent trends in
  algebraic combinatorics, Assoc. Women Math. Ser., vol.~16, Springer, Cham,
  2019, pp.~189--237.

\bibitem[LP05]{Lam2005AlcovedPI}
Thomas Lam and Alexander Postnikov, \emph{Alcoved polytopes, i}, Discrete \&
  Computational Geometry \textbf{38} (2005), 453--478.

\bibitem[LP20]{lam-postnikov-polypositroids}
Thomas Lam and Alexander Postnikov, \emph{{Polypositroids}}, arXiv e-prints
  (2020), arXiv:2010.07120.

\bibitem[MM15]{MeszarosMorales}
Karola M\'{e}sz\'{a}ros and Alejandro~H. Morales, \emph{Flow polytopes of
  signed graphs and the {K}ostant partition function}, Int. Math. Res. Not.
  IMRN (2015), no.~3, 830--871.

\bibitem[MNWW11]{mayhew}
Dillon Mayhew, Mike Newman, Dominic Welsh, and Geoff Whittle, \emph{On the
  asymptotic proportion of connected matroids}, European J. Combin. \textbf{32}
  (2011), no.~6, 882--890.

\bibitem[Mor93]{Morelli}
Robert Morelli, \emph{Pick's theorem and the {T}odd class of a toric variety},
  Adv. Math. \textbf{100} (1993), no.~2, 183--231.

\bibitem[Mur71]{MurtyMinimalMatroids}
U.~S.~R. Murty, \emph{On the number of bases of a matroid}, Proceedings of the
  {S}econd {L}ouisiana {C}onference on {C}ombinatorics, {G}raph {T}heory and
  {C}omputing ({L}ouisiana {S}tate {U}niv., {B}aton {R}ouge, {L}a., 1971),
  Louisiana State University, Baton Rouge, LA, 1971, pp.~387--410.

\bibitem[Oxl11]{Oxley}
James Oxley, \emph{Matroid theory}, second ed., Oxford Graduate Texts in
  Mathematics, vol.~21, Oxford University Press, Oxford, 2011.

\bibitem[Pom93]{Pommersheim}
James~E. Pommersheim, \emph{Toric varieties, lattice points and {D}edekind
  sums}, Math. Ann. \textbf{295} (1993), no.~1, 1--24.

\bibitem[PvdP15]{pendavingh-vanderpol}
Rudi Pendavingh and Jorn van~der Pol, \emph{On the number of matroids compared
  to the number of sparse paving matroids}, Electron. J. Combin. \textbf{22}
  (2015), no.~2, Paper 2.51, 17.

\bibitem[Sag20]{sagan2020art}
Bruce~E. Sagan, \emph{Combinatorics: the art of counting}, Graduate Studies in
  Mathematics, vol. 210, American Mathematical Society, Providence, RI, [2020]
  \copyright 2020. \MR{4249619}

\bibitem[Wel76]{Welsh}
D.~J.~A. Welsh, \emph{Matroid theory}, LMS Monographs, No. 8, Academic Press
  [Harcourt Brace Jovanovich, Publishers], London-New York, 1976.

\end{thebibliography}
\bibliographystyle{amsalpha}
\end{document}